\newtheorem{theorem}{Theorem}
\newtheorem{proposition}[theorem]{Proposition}
\newtheorem{lemma}[theorem]{Lemma}
\newtheorem{corollary}[theorem]{Corollary}
\newtheorem{conjecture}[theorem]{Conjecture}
\theoremstyle{definition}
\newtheorem{definition}[theorem]{Definition}
\newtheorem{example}[theorem]{Example}
\newtheorem{problem}[theorem]{Problem}
\newcommand{\defin}[1]{\emph{#1}}
\newcommand{\opsurj}{\mathcal{O}}
\newcommand{\setN}{\mathbb{N}}
\newcommand{\setZ}{\mathbb{Z}}
\newcommand{\avec}{\mathbf{a}}
\newcommand{\evec}{\mathbf{e}}
\newcommand{\xvec}{\mathbf{x}}
\newcommand{\svec}{\mathbf{s}}
\newcommand{\hrvp}{{\pi}} 
\newcommand{\hrvpp}{{\overline{\pi}}} 
\newcommand{\psumP}{\mathrm{p}}
\newcommand{\elementaryE}{\mathrm{e}}
\newcommand{\completeH}{\mathrm{h}}
\newcommand{\macdonaldH}{\mathrm{\tilde H}}
\newcommand{\hallLittlewoodT}{\mathrm{H}}
\newcommand{\schurS}{\mathrm{s}}
\newcommand{\LLT}{\mathrm{G}}
\newcommand{\FLLT}{\tilde{\mathrm{G}}}
\newcommand{\LLTc}{\hat{\mathrm{G}}} 
\newcommand{\chrom}{\mathrm{X}}
\newcommand{\SSYT}{\mathrm{SSYT}}
\newcommand{\SYT}{\mathrm{SYT}}
\newcommand{\Des}{\mathrm{Des}}
\DeclareMathOperator{\wt}{wt}
\DeclareMathOperator{\modwt}{\widetilde{wt}}
\DeclareMathOperator{\std}{std}
\DeclareMathOperator{\hrv}{hrv} 
\DeclareMathOperator{\length}{\ell}
\DeclareMathOperator{\asc}{asc}
\title{LLT polynomials, elementary symmetric functions and melting lollipops}
\author{Per Alexandersson}
\address{Dept. of Mathematics, Royal Institute of Technology, SE-100 44 Stockholm, Sweden}
\email{per.w.alexandersson@gmail.com}
\begin{document}

\begin{abstract}
We conjecture an explicit positive combinatorial formula for 
the expansion of unicellular LLT polynomials in the elementary symmetric basis.
This is an analogue of the Shareshian--Wachs conjecture previously
studied by Panova and the author in 2018.
We show that the conjecture for unicellular LLT polynomials 
implies a similar formula for vertical-strip LLT polynomials.

We prove positivity in the elementary basis in for the class of graphs 
called ``melting lollipops'' previously considered by Huh, Nam and Yoo.
This is done by proving a curious relationship between a generalization of charge 
and orientations of unit-interval graphs. 

We also provide short bijective proofs of Lee's three-term recurrences
for unicellular LLT polynomials and we show that these recurrences are enough 
to generate all unicellular LLT polynomials associated with abelian area sequences.
\end{abstract}

\maketitle

\setcounter{tocdepth}{1}
\tableofcontents

\section{Introduction}

\subsection{Background on LLT polynomials}

LLT polynomials were introduced by Lascoux, Leclerc and Thibon in \cite{Lascoux97ribbontableaux},
and are $q$-deformations of products of skew Schur functions.
An alternative combinatorial model for the LLT polynomials was later introduced in \cite{HaglundHaimanLoehr2005}
while studying Macdonald polynomials. In their paper, LLT polynomials are indexed by a $k$-tuple of skew shapes.
In the case each such skew shape is a single box, the LLT polynomial is said to be \emph{unicellular LLT polynomial}.
Such unicellular LLT polynomials are the main topic of this paper.

\subsection{Background on chromatic symmetric functions}

In \cite{CarlssonMellit2017} Carlsson and Mellit introduced a more convenient combinatorial model 
for unicellular LLT polynomials, indexed by (area sequences of) Dyck paths.  
They also highlighted an important relationship using plethysm between unicellular LLT polynomials
and the chromatic quasisymmetric functions introduced by Shareshian and Wachs in \cite{ShareshianWachs2012}.

The chromatic quasisymmetric functions refine the chromatic symmetric 
functions introduced by Stanley in \cite{Stanley95Chromatic}.
The Stanley--Stembridge conjecture \cite{StanleyStembridge1993} states 
that such chromatic symmetric functions 
associated with \defin{unit interval graphs}, and more generally, 
incomparability graphs of $3+1$-free posets are positive in the elementary symmetric basis, or $\elementaryE$-positive for short.
Their conjecture was refined with the introduction of an additional parameter $q$
in \cite{ShareshianWachs2012}. 
The class of graphs for which this conjecture is believed to hold 
was later extended to the class of \emph{circular unit interval graphs} in \cite{Ellzey2016,Ellzey2017}
where it is conjectured that the chromatic quasisymmetric functions expanded in the $\elementaryE$-basis
have coefficients in $\setN[q]$, see \cref{conj:shareshianWachs} below.
To this date, there is still not even a conjectured 
combinatorial formula for the $\elementaryE$-expansion of the chromatic symmetric functions.

The idea of studying LLT polynomials in parallel with quasisymmetric chromatic functions
originated in \cite{CarlssonMellit2017}, althought the connection is perhaps in hindsight apparent 
in the techniques used in \cite{HaglundHaimanLoehr2005}.
We also mention an interesting paper by Haglund 
and Wilson \cite{HaglundWilson2017} explores the connection between
the integral-form Macdonald polynomials and the quasisymmetric chromatic functions.

\subsection{Main results}

In \cite{AlexanderssonPanova2016}, we stated an analogue of the Shareshian--Wachs conjecture
regarding $\elementaryE$-positivity of unicellular LLT polynomials, $\LLT_\avec(\xvec;q+1)$
and proved the conjecture in a few cases.
We also provided many similarities between unicellular LLT polynomials and 
chromatic quasisymmetric functions associated with unit-interval graphs.
The problem of $\elementaryE$-positivity of 
unicellular LLT polynomials is the main topic of this article.

\medskip 

The main results are:
\begin{itemize}
 \item We present a precise conjectured combinatorial formula for the $\elementaryE$-expansion of 
 $\LLT_\avec(\xvec;q+1)$. Our conjecture states that the unicellular LLT polynomial $\LLT_{\avec}(\xvec;q)$ is given as
 \begin{align}\label{eq:conjFormulaIntro}
  \LLT_{\avec}(\xvec;q+1) \coloneqq \sum_{\theta \in O(\avec)} q^{\asc(\theta)}\elementaryE_{\hrvpp(\theta)}(\xvec).
\end{align}
where $O(\avec)$ is the set of orientations of the unit interval graph with area sequence $\avec$,
and $\hrvpp(\theta)$ is an explicit partition-valued statistic on such orientation.
This formula can be extended to vertical-strip LLT polynomials,
and has been verified on the computer for all unit-interval graphs up to $10$ vertices.
This formula is surprising, as there is still no analogous conjectured formula for chromatic symmetric functions.
 
 A possible application of \eqref{eq:conjFormulaIntro} is to find a positive combinatorial 
 formula for the Schur-expansion of $\LLT_{\avec}(\xvec;q)$.
 
 \item We prove in \cref{cor:unicellularImpliesVS} that the conjectured 
 formula \eqref{eq:conjFormulaIntro} implies a generalized formula 
 for the so called \emph{vertical-strip LLT polynomials}.
 Furthermore, we prove that \eqref{eq:conjFormulaIntro} holds for 
 the family of complete graphs and line graphs.

 \item 
 Analogous recursions for the unicellular LLT polynomials are given by Lee in \cite{Lee2018}.
 We give short bijective proofs of these recurrences and show that 
 all graphs associated with abelian Hessenberg varieties can 
 be computed recursively via Lee's recurrences, starting from unicellular LLT polynomials 
 associated with the complete graphs.
 
 \item In \cref{sec:hallLittlwood}, we prove that the transformed Hall--Littlewood polynomials 
 $\hallLittlewoodT_\lambda(\xvec;q+1)$ are positive in the complete homogeneous basis.
 This implies that a corresponding family of vertical-strip LLT polynomials are $\elementaryE$-positive.
 
  Note that vertical-strip LLT polynomials appear in diagonal harmonics,
 see for example \cite[Section 4]{Bergeron2017} and \cite{HaglundHaimanLoehrRemmelUlyanov2005,Bergeron2013}.
 Consequently, \eqref{eq:conjFormulaIntro} provides support for some of the conjectures 
 regarding $\elementaryE$-positive in these references.
 We note that the authors of a recent preprint \cite{GarsiaHaglundQiuRomero2019} also 
 independently found the conjecture in \eqref{eq:conjFormulaIntro}. 
 The $\elementaryE$-positivity part of the conjecture has since been proved by M. D'Adderio in \cite{DAdderio2019}.
 We remark that $\elementaryE$-positivity is very rare in reality, see \cite{PatriasWilligenburg2018} for details.
 
 \item In \cref{sec:cocharge}, we prove a curious identity between a generalization of charge, 
 denoted $\wt_\avec(T)$, and the set of orientations, $O(\avec)$, of a unit-interval graph $\Gamma_\avec$.
 It states that
 \begin{align}\label{eq:chargeFormulaIntro}
  \sum_{\lambda \vdash n} \sum_{T \in \SYT(\lambda)} (q+1)^{\wt_\avec(T)} \schurS_\lambda(\xvec) = 
  \sum_{\theta \in O(\avec)} q^{\asc(\theta)} \elementaryE_{\sigma(\theta)}(\xvec),
 \end{align}
 where $\asc(\cdot)$ and $\sigma(\cdot)$ are certain combinatorial statistics on orientations.
 This version of charge was considered in \cite{HuhNamYoo2018} in order to prove Schur positivity for unicellular 
 LLT polynomials in the melting lollipop graph case.
 
 As a consequence, we get an explicit positive $\elementaryE$-expansion the case of \emph{melting lollipop graphs}
 which has previously been considered in \cite{HuhNamYoo2018}.
 The corresponding family of chromatic quasisymmetric functions was 
 considered in \cite{DahlbergWilligenburg2018} where they were proved to be $\elementaryE$-positive.
 Note however that the statistic $\hrvpp(\theta)$ in \eqref{eq:conjFormulaIntro} and $\sigma(\theta)$ in 
 \eqref{eq:chargeFormulaIntro} are different.
\end{itemize}

The paper is organized as follows. 
We first introduce the family of unicellular-- and vertical-strip LLT polynomials
and some of their basic properties. In \cref{sec:recProp}, we prove 
several recursive identities for such LLT polynomials.
In particular, we show that the recursions by Lee \cite{Lee2018}
can be used to construct unicellular LLT polynomials indexed by any abelian area sequence.

Some vertical-strip LLT polynomials are closely related to 
the transformed Hall--Littlewood polynomials. 
In \cref{sec:hallLittlwood}, we show that the transformed Hall--Littlewood polynomials 
$\hallLittlewoodT_\lambda(\xvec;q+1)$ are $\completeH$-positive, which gives further 
support for the main conjecture.

In \cref{sec:cocharge}, we study the relationship between a type of generalized cocharge
introduced in \cite{HuhNamYoo2018} and $\elementaryE$-positivity. 
This provides a proof that unicellular LLT polynomials given by melting lollipop graphs 
are $\elementaryE$-positive. 

Finally in \cref{sec:powersum}, we describe a possible approach 
to prove \eqref{eq:conjFormulaIntro} by a comparison in the power-sum symmetric basis.

\section{Preliminaries}

We use the same notation and terminology as in \cite{AlexanderssonPanova2016}.
The reader is assumed to have a basic background on symmetric functions and related combinatorial objects, 
see \cite{StanleyEC2,Macdonald1995}. 
All Young diagrams and tableaux are presented in the English convention.

\subsection{Dyck paths and unit-interval graphs}\label{subsec:graphs}

An \defin{area sequence} is an integer vector $\avec=(a_1,\dotsc,a_n)$ which satisfies
\begin{itemize}
 \item $0 \leq a_i \leq i-1 $ for $1 \leq i \leq n$ and
 \item $a_{i+1} \leq a_{i} + 1$ for $1 \leq i < n$.
\end{itemize}
The number of such area sequences of size $n$ is given by the Catalan numbers.
Note that \cite{HuhNamYoo2018} uses a reversed indexing of entries in area sequences.

\begin{definition}\label{def:UIGraph}
For every area sequence of length $n$, we define a \defin{unit interval graph} 
$\Gamma_\avec$ with vertex set $[n]$ and the directed edges
\begin{align}\label{eq:cyclicUIGraph}
(i-a_i) \to i, \; (i-a_i + 1) \to i,  \; (i-a_i + 2) \to i, \; \dotsc \; , (i-1) \to i
\end{align}
for all $i = 1,\dotsc,n$.
We say that $(u,v)$ with $u<v$ is an \defin{outer corner} of $\Gamma_\avec$
if $(u,v)$ is not an edge of $\Gamma_\avec$, and either
\begin{itemize}
 \item $u+1=v$ or
 \item $(u+1,v)$ and $(u,v-1)$ are edges of $\Gamma_\avec$.
\end{itemize}
\end{definition}

\begin{example}\label{ex:dyckPath}
We can illustrate area sequences and their corresponding unit-interval graphs as \defin{Dyck diagrams}, 
as is done in \cite{qtCatalanBook,AlexanderssonPanova2016}.
For example, $(0,1,2,3,2,2)$ corresponds to the diagram
\begin{align}\label{eq:dyckPathExample}
\begin{ytableau}
*(lightgray)   & *(lightgray)   & *(lightgray)   &      &    & *(yellow) 6 \\
*(lightgray)   & *(lightgray)   &     &     & *(yellow) 5\\
    &    &     & *(yellow) 4\\
    &     & *(yellow) 3\\
    & *(yellow) 2\\
*(yellow) 1
\end{ytableau}
\qquad 
\begin{ytableau}
*(lightgray)   & *(lightgray)   & *(lightgray)   &   \scriptstyle{46}   & \scriptstyle{56}   & *(yellow) 6 \\
*(lightgray)   & *(lightgray)   &  \scriptstyle{35}   &   \scriptstyle{45}  & *(yellow) 5\\
   \scriptstyle{14} & \scriptstyle{24}   &   \scriptstyle{34}  & *(yellow) 4\\
  \scriptstyle{13}  &  \scriptstyle{23}   & *(yellow) 3\\
\scriptstyle{12}    & *(yellow) 2\\
*(yellow) 1
\end{ytableau}
\end{align}
where the area sequence specify the number of white squares in each row, bottom to top.
The squares on the main diagonal are the vertices of $\Gamma_\avec$,
and each white square correspond to a directed edge of $\Gamma_\avec$.
In the second figure we see this correspondence where edge $(i,j)$ is marked as $ij$.
The outer corners of $\Gamma_\avec$ are $(2,5)$ and $(3,6)$.
\end{example}

\textbf{Caution:} We do not really distinguish the 
terms \emph{area sequence}, \emph{Dyck diagram} and \emph{unit interval graph},
as they all relate to the same objects. 
What term is used depends on context and what features we wish to emphasize.
\medskip

Let $\Gamma_\avec$ be an unit interval graph with $n$ vertices. 
We let $\avec^T$ denote the area sequence of $\Gamma_\avec$ where all edges have 
been reversed, and every vertex $j \in [n]$ has been relabeled with $n+1-j$.
This operation corresponds to simply transposing the Dyck diagram.
\begin{lemma}[{See \cite{AlexanderssonPanova2016}}]\label{lem:rowColArea}
The entries in an area sequence $\avec$ is a rearrangement of the entries in $\avec^T$.
\end{lemma}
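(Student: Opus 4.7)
The plan is to translate the statement into a graph-theoretic claim about in-degrees and out-degrees of $\Gamma_\avec$, and then prove the multiset equality by a two-way count of edges by their length. First, by \cref{def:UIGraph} the entry $a_i$ equals the in-degree of vertex $i$ in $\Gamma_\avec$: the listed edges $(i-a_i)\to i,\dotsc,(i-1)\to i$ are exactly the $a_i$ edges coming into $i$. The definition of $\avec^T$ as the area sequence of the graph obtained by reversing every edge and relabeling $j \mapsto n+1-j$ identifies its multiset of entries with the out-degrees $b_u \coloneqq |\{v > u : (u,v)\text{ is an edge of } \Gamma_\avec\}|$ of the original graph. So the lemma reduces to showing that $\{a_1,\dotsc,a_n\}$ and $\{b_1,\dotsc,b_n\}$ coincide as multisets.

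For this, I would invoke the standard criterion that two multisets of nonnegative integers agree iff the counts $|\{x : x \geq k\}|$ agree for every $k \geq 1$. The key step uses the unit-interval structure: because the edges into $v$ are precisely $(v-1,v),(v-2,v),\dotsc,(v-a_v,v)$, we have $a_v \geq k$ if and only if the length-$k$ edge $(v-k,v)$ is present in $\Gamma_\avec$. Consequently $|\{v : a_v \geq k\}|$ equals the number of edges of $\Gamma_\avec$ of length exactly $k$. A symmetric argument shows $b_u \geq k$ iff $(u,u+k)$ is an edge, so $|\{u : b_u \geq k\}|$ counts the same edges. The desired equality follows.

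There is no real obstacle, as the substance is entirely in the two-way counting of length-$k$ edges. If a more pictorial proof is preferred, one can argue directly on the Dyck diagram of \cref{ex:dyckPath}: an empty cell in row $i$ at anti-diagonal distance $k$ from the diagonal exists precisely when $a_i \geq k$, and the very same cell sits in column $i-k$, contributing to $b_{i-k} \geq k$. Counting empty cells at distance $k$ by rows and by columns yields the equality at once.
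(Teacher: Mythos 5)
Your argument is correct, and it is essentially the standard proof; the paper itself does not prove this lemma but defers to the cited reference \cite{AlexanderssonPanova2016}, so there is no in-text argument to compare against. The reduction is right: $a_i$ is the in-degree of $i$ in $\Gamma_\avec$, and by the definition of $\avec^T$ (reverse all edges, relabel $j\mapsto n+1-j$) its entries form the multiset of out-degrees $b_u$ of $\Gamma_\avec$, so the lemma is the multiset identity between in-degrees and out-degrees, which your length-$k$ double count establishes. One point you should make explicit rather than dismiss as ``a symmetric argument'': the equivalence $a_v\geq k \Leftrightarrow (v-k,v)\in E(\Gamma_\avec)$ holds \emph{by definition} of $\Gamma_\avec$ (in-neighbourhoods are the consecutive runs $v-a_v,\dotsc,v-1$), whereas the companion claim $b_u\geq k \Leftrightarrow (u,u+k)\in E(\Gamma_\avec)$ is not definitional; it needs the area-sequence condition $a_{i+1}\leq a_i+1$. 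The fix is one line: if $(u,v)$ is an edge and $u<w<v$, then $a_w\geq a_v-(v-w)\geq (v-u)-(v-w)=w-u$, so $(u,w)$ is also an edge, i.e.\ out-neighbourhoods are the intervals $\{u+1,\dotsc,u+b_u\}$ (equivalently, the white cells in each column of the Dyck diagram are justified toward the diagonal). With that inserted, both your counting version and the pictorial column/row count of white cells at anti-diagonal distance $k$ are complete.
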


Most results in this paper concerns a few special classes of area sequences.
\begin{definition}
An area sequence of length $n$ is called \defin{rectangular} if 
either $\avec = (0,1,2,\dotsc,n-1)$ or there is some $k \in [n]$
such that
\[
 a_i = i-1 \text{ for $i=1,2,\dotsc,k$ and } a_j = j-k-1 \text{ for $j=k+1,k+2,\dotsc,n$}.
\]
This condition is equivalent with all non-edges forming a $k \times (n-k)$-rectangle in the Dyck diagram.
Furthermore, an area sequence $\avec'$ is called \defin{abelian} 
whenever $a'_i \geq a_i$ for some rectangular sequence $\avec$.
For example, the area sequence in \eqref{eq:dyckPathExample} is abelian.
\end{definition}
The terminology is motivated by \cite{HaraldaPrecup2017}, 
where abelian area sequences are associated with abelian Hessenberg varieties.

We will also consider the following families of area sequences:
\begin{itemize}
 \item The complete graphs, $(0,1,2,\dotsc,n-1)$.
 \item The line graphs $(0,1,1,\dotsc,1)$.
 \item Lollipop graphs, where 
 \[
  a_i = \begin{cases}
	i-1 \text{ for } i=1,\dotsc,m \\
	1\text{ for } i=m+1,\dotsc,m+n
\end{cases}
 \]
 for some $m,n\geq 1$.
\item Melting complete graph,
 \[
  a_i = \begin{cases}
        i-1\text{ for } i=1,2,\dotsc,n-1 \\
	n-k-1 \text{ for } i=n 
\end{cases}
\]
where $0\leq k \leq n-1$.
\item Melting lollipop graphs, defined as 
 \[
  a_i = \begin{cases}
	i-1 \text{ for } i=1,\dotsc,m-1 \\
	m-1-k \text{ for } i=m \\
	1\text{ for } i=m+1,\dotsc,m+n
\end{cases}
 \]
for $m,n\geq 1$ and $0\leq k \leq m-1$.
\end{itemize}

\subsection{Vertical strip diagrams}

A \defin{vertical strip diagram} is a Dyck diagram where some of the outer corners
have been marked with $\rightarrow$. We call such an outer corner a \defin{strict edge}.
These markings correspond to some extra oriented edges of $\Gamma_\avec$.
We use the notation $\Gamma_{\avec,\svec}$ to denote a directed graph 
with some additional strict edges $\svec$ and refer to the graph $\Gamma_{\avec,\svec}$
as a vertical strip diagram as well.
\begin{example}
Below is an example of a vertical strip diagram.
\[
\begin{ytableau}
*(lightgray)   & *(lightgray)   & *(lightgray) \rightarrow   &      &    & *(yellow) 6 \\
*(lightgray)   & *(lightgray)   &     &     & *(yellow) 5\\
*(lightgray)   \rightarrow &    &     & *(yellow) 4\\
    &     & *(yellow) 3\\
    & *(yellow) 2\\
*(yellow) 1
\end{ytableau}
\]
The edges $(1,4)$ and $(3,6)$ are strict of $\Gamma_{\avec,\svec}$, 
and the directed edges of $\Gamma_{\avec}$ (which are also edges of $\Gamma_{\avec,\svec}$)
are 
\[
 \{ (1,2), (1,3), (2,3), (2,4),(3,4), (3,5),(4,5), (4,6), (5,6) \}.
\]
Note that this is another example of a diagram with an abelian area sequence.
\end{example}

\subsection{Vertical strip LLT polynomials}

Let $\Gamma_{\avec,\svec}$ be a vertical strip diagram.
A \defin{valid coloring} $\kappa :V(\Gamma_{\avec,\svec}) \to \setN$ 
is a vertex coloring of $\Gamma_{\avec,\svec}$ 
such that $\kappa(u)<\kappa(v)$ whenever $(u,v)$ is a strict edge in $\svec$.
Given a coloring $\kappa$, an \emph{ascent} of $\kappa$ is a (directed) edge $(u,v)$ in $\Gamma_{\avec,\svec}$
such that $\kappa(u)<\kappa(v)$. Note that strict edges do not count as ascents.
Let $\asc(\kappa)$ denote the number of ascents of $\kappa$.

\begin{definition}\label{def:LLTPoly}
Let $\Gamma_{\avec,\svec}$ be a vertical strip diagram. 
The \emph{vertical strip LLT polynomial} $\LLT_{\avec,\svec}(\xvec;q)$ is defined as
\begin{equation}\label{eq:LLTdef}
 \LLT_{\avec,\svec}(\xvec;q) \coloneqq \sum_{\kappa : V(\Gamma_{\avec,\svec}) \to \setN } \xvec^\kappa q^{\asc(\kappa)}
\end{equation}
where the sum is over valid colorings of $\Gamma_{\avec,\svec}$.
Whenever $\svec = \emptyset$, we simply write $\LLT_{\avec}(\xvec;q)$ 
and refer to this as a \defin{unicellular LLT polynomial}.
\end{definition}
As an example, here is $\LLT_{0012}(\xvec;q)$ expanded in the Schur basis:
\[
  \LLT_{0012}(\xvec;q) = q^3 \schurS_{1111}+(q+q^2+q^3) \schurS_{211}+(q+q^2) \schurS_{22}+(1+q+q^2)\schurS_{31}+\schurS_{4}.
\]
The polynomials $\LLT_{\avec,\svec}(\xvec;q)$ are known to be symmetric,
and correspond to classical LLT polynomials indexed by $k$-tuples of skew shapes as in \cite{HaglundHaimanLoehr2005}.
In fact, the unicellular LLT polynomials correspond to the case when all shapes in the $k$-tuple are single cells, 
and the vertical strip case correspond to $k$-tuples of single columns.
This correspondence is proved in \cite{AlexanderssonPanova2016} and is also done implicitly in \cite{CarlssonMellit2017}.
There is a close connection with the $\zeta$ map used by Haglund and Loehr, see \cite{HaglundLoehr2005,qtCatalanBook}.

\begin{example}
In the following vertical strip diagram, we illustrate a valid coloring $\kappa$
where we have written $\kappa(i)$ on vertex $i$. 
That is, $\kappa(1)=1$, $\kappa(2)=3$, $\kappa(3)=2$, etc.
\[
\begin{ytableau}
*(lightgray)   & *(lightgray)   & *(lightgray) \rightarrow   &      & \to   & *(yellow) 3 \\
*(lightgray)   & *(lightgray)   &     &     & *(yellow) 1\\
*(lightgray)   \rightarrow &  \to  &  \to   & *(yellow) 4\\
  \to  &     & *(yellow) 2\\
  \to  & *(yellow) 3\\
*(yellow) 1
\end{ytableau}
\]
The strict edges and edges contributing to $\asc(\kappa)$ have been marked with $\to$. 
Hence, $\kappa$ contributes with $q^{5}x_1^2 x_2 x_3^2 x_4$ to the sum in \eqref{eq:LLTdef}.
\end{example}

\subsection{A conjectured formula}

\begin{definition}\label{def:orientationStuff}
Let $\avec$ be an area sequence of length $n$ and $\svec$ be some strict edges of $\Gamma_\avec$. 
Let $O(\avec,\svec)$ denote the set of orientations 
of the graph $\Gamma_\avec$ (seen as an undirected graph) together with the extra directed edges in $\svec$.
Thus, the cardinality of $O(\avec,\svec)$ is simply $2^{a_1+\dotsb+a_n}$.
If $\svec=\emptyset$, we simply write $O(\avec)$ for the set of orientations of $\Gamma_\avec$.
Given $\theta \in O(\avec,\svec)$, an edge $(u,v)$ is an \defin{ascending} edge in $\theta$
if it is oriented in the same manner as in $\Gamma_\avec$. 
Let $\asc(\theta)$ denote the number of ascending edges in $\theta$.
\emph{Note that edges in $\svec$ are not considered to be ascending!}

We now define the \defin{highest reachable vertex}, $\hrv_\theta(u)$ for $u\in [n]$ as
the maximal $v$ such that there is a path from $u$ to $v$ in $\theta$ \emph{using only strict and ascending edges}.
Note that $\hrv_\theta(u)\geq u$ for all $u$.
The orientation $\theta$ defines a \emph{set partition} $\hrvp(\theta)$ of the vertices of $\Gamma_\avec$,
where two vertices are in the same part if and only if they have the same highest reachable vertex.
Let $\hrvpp(\theta)$ denote the \emph{partition} given by the sizes of the sets in $\hrvp(\theta)$.

Let $\avec$ be an area sequence and $\svec$ be some strict edges of $\Gamma_\avec$.
Define the symmetric function $\LLTc_{\avec,\svec}(\xvec;q)$ via the relation
\begin{align}\label{eq:conjFormula}
\LLTc_{\avec,\svec}(\xvec;q+1) \coloneqq \sum_{\theta \in O(\avec,\svec)} q^{\asc(\theta)}\elementaryE_{\hrvpp(\theta)}(\xvec).
\end{align}
\end{definition}

\begin{example}
Below, we illustrate an orientation $\theta \in O(\avec,\svec)$,
where $\avec = (0,1,2,2,2,2)$ and $\svec = \{ (1,4), (2,5)  \}$.
As before, strict edges and edges contributing to $\asc(\theta)$ are marked with $\to$.
\[
\begin{ytableau}
*(lightgray)   & *(lightgray)   & *(lightgray)   &      & \to   & *(yellow) 6 \\
*(lightgray)   & *(lightgray)  \rightarrow &     &     & *(yellow) 5\\
*(lightgray)   \rightarrow &  \to &  \to   & *(yellow) 4\\
  \to  & \to    & *(yellow) 3\\
    & *(yellow) 2\\
*(yellow) 1
\end{ytableau}
\]
We have that $\hrv_\theta(2)=\hrv_\theta(5)=\hrv_\theta(6)=6$ and $\hrv_\theta(1)=\hrv_\theta(3)=\hrv_\theta(4)=4$.
Thus $\hrvp(\theta) = \{652,431\}$ and the orientation $\theta$ contributes 
with $q^{5}\elementaryE_{33}(\xvec)$ in \ref{eq:conjFormula}.
The full polynomial $\LLTc_{\avec,\svec}(\xvec;q+1)$ is
\begin{align*}
&(4 q^3+20 q^4+41 q^5+44 q^6+26 q^7+8 q^8+q^9) \elementaryE_{6}+
(2 q^2+7 q^3+9 q^4+5 q^5+q^6) \elementaryE_{33}+\\
&(2 q^2+9 q^3+16 q^4+14 q^5+6 q^6+q^7) \elementaryE_{42}+\\
&(4 q^2+22 q^3+48 q^4+53 q^5+31 q^6+9 q^7+q^8) \elementaryE_{51}+\\
&(4 q+14 q^2+18 q^3+10 q^4+2 q^5) \elementaryE_{321}+
(q+8 q^2+20 q^3+22 q^4+11 q^5+2 q^6) \elementaryE_{411}+\\
&(1+3 q+3 q^2+q^3) \elementaryE_{2211}+
(q+3 q^2+3 q^3+q^4) \elementaryE_{3111}
\end{align*}
\end{example}

\begin{conjecture}[Main conjecture]\label{conj:main}
For any vertical-strip LLT polynomial $\LLT_{\avec,\svec}(\xvec;q)$ 
we have that $\LLT_{\avec,\svec}(\xvec;q) = \LLTc_{\avec,\svec}(\xvec;q)$.
\end{conjecture}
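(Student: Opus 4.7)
The plan is to first reduce to the unicellular case. The paper establishes \cref{cor:unicellularImpliesVS} unconditionally, so the statement for unicellular polynomials $\LLT_\avec(\xvec;q)$ implies the full vertical-strip conjecture. Thus the task is to show
\[
\LLT_\avec(\xvec;q+1) = \sum_{\theta \in O(\avec)} q^{\asc(\theta)}\elementaryE_{\hrvpp(\theta)}(\xvec)
\]
for every area sequence $\avec$.

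For the unicellular case, my principal strategy is induction via recurrence. First, I would use the base cases (complete graphs, line graphs, and the empty graph) that the paper establishes directly. Next, I would show that the right-hand side $\LLTc_\avec$ satisfies Lee's three-term recurrences, whose short bijective proofs for the left-hand side are given earlier in the paper. This amounts to producing an involution or weight-preserving surjection on $O(\avec)$ that mirrors Lee's edge-contraction/deletion operation: given an outer corner or removable edge, partition orientations according to the orientation of that edge and check that the residual weighted sum reorganizes into the two recursive terms, with $\hrvpp(\theta)$ transforming correctly as reachability classes merge or split. Since Lee's recurrences, seeded from the complete graphs, generate every abelian area sequence, this settles \cref{conj:main} in the abelian case. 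For non-abelian $\avec$ one needs either additional recurrences reducing an arbitrary area sequence to an abelian one, or a direct global comparison.

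As a complementary approach, following the outline in \cref{sec:powersum}, one could compare both sides in the power-sum basis. Expanding $\elementaryE_{\hrvpp(\theta)}$ into $\psumP$-monomials and summing over $\theta \in O(\avec)$ should produce, for each partition $\mu$, a signed enumeration over orientations and set-partition refinements; the aim is to match this against a combinatorial $\psumP$-expansion of $\LLT_\avec(\xvec;q+1)$ derivable from the $\hallLittlewoodT$-positivity results of \cref{sec:hallLittlwood} and the plethystic bridge to chromatic quasisymmetric functions. The main obstacle in either approach is the same: the statistic $\hrvpp(\theta)$ is defined via directed reachability using ascending and strict edges, an inherently non-local property, whereas both Lee's recurrences and power-sum plethysms act locally on edges. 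Controlling how reachability classes merge, split, or stabilize under these local operations is, in my view, the core combinatorial difficulty of the conjecture; if a clean reachability-preserving bijection can be exhibited for a single edge flip, the rest of the induction should follow.
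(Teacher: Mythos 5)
This statement is, in the paper itself, an open conjecture: the paper proves only the reduction \cref{cor:unicellularImpliesVS}, Lee's recurrences for $\LLT_\avec$, and the formula in special families (complete graphs, line graphs, and, for a different statistic $\sigma$, melting lollipops), and it explicitly states that it is unable to show that $\LLTc_\avec$ satisfies Lee's recurrences, with only computer verification for $n\le 7$. Your proposal follows exactly this program, so the question is whether you supply the missing steps, and you do not. The first pillar of your induction --- that $\LLTc_\avec(\xvec;q)$ obeys Lee's three-term relation, to be shown by ``partitioning orientations according to the orientation of that edge and checking that the residual weighted sum reorganizes'' --- is precisely the open point. The paper's bijective proof of \cref{prop:lee} works on colorings and uses a color swap of the two adjacent vertices $i,i+1$; for orientations the statistic $\hrvpp$ is a global reachability invariant, and no map is exhibited that controls how the reachability classes merge or split under the corresponding local move while preserving the $q$-weight across the three area sequences $\avec^0,\avec^1,\avec^2$. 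You correctly identify this as the core difficulty, but identifying it does not discharge it, and nothing in the proposal constructs the required bijection or involution.

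The remaining steps are likewise programmatic rather than proved. Even granting Lee's recurrences for $\LLTc$, the recursive generation covers only abelian area sequences (\cref{prop:recToAbelian}, seeded from rectangular sequences, which additionally requires a multiplicativity statement for $\LLTc$ over disjoint unions that you do not address); your treatment of general $\avec$ --- ``additional recurrences \dots or a direct global comparison'' --- has no content. The power-sum route of \cref{sec:powersum} is, as the paper notes, only an equivalent reformulation of \cref{conj:main}: expanding $\elementaryE_{\hrvpp(\theta)}$ in the $\psumP$-basis and invoking \cref{prop:pexpansion} leaves one with the identity $\sum_\theta q^{\asc(\theta)}|\opsurj^\ast_\lambda(P(\theta))| = \sum_\theta q^{\asc(\theta)}|\opsurj^\ast_\lambda(B(\theta))|$, which is again unproved; the Hall--Littlewood positivity of \cref{sec:hallLittlwood} yields $\elementaryE$-positivity for a special family of vertical strips, not the orientation formula. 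In short, the proposal restates the paper's partial results and its intended strategy but proves nothing beyond what the paper already establishes; as a proof of \cref{conj:main} it has a genuine gap at exactly the point the paper flags as open.
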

Note that this conjecture implies that $\LLT_{\avec,\svec}(\xvec;q+1)$ is $\elementaryE$-positive,
with the expansion given as a sum over all orientations of $\Gamma_\avec$.
Such a conjecture was first stated in \cite{AlexanderssonPanova2016}
but without a precise definition of $\hrvpp(\theta)$.
\cref{conj:main} is a natural analogue of 
the Shareshian--Wachs conjecture, \cite{ShareshianWachs2012,ShareshianWachs2016},
and therefore is also closely related to the Stanley--Stembridge conjecture \cite{StanleyStembridge1993,Stanley95Chromatic}.
There is also a natural generalization of \cref{eq:conjFormula} that predicts the $\elementaryE$-expansion
of the LLT polynomials indexed by \emph{circular} area sequences considered in \cite{AlexanderssonPanova2016}.

\subsection{Properties of LLT polynomials}

We use standard notation and let $\omega$ be the involution on symmetric functions 
that sends the complete homogeneous symmetric function $\completeH_\lambda$ to the elementary symmetric function
$\elementaryE_\lambda$, or equivalently, sends $\schurS_\lambda$ to $\schurS_{\lambda'}$. 

\begin{proposition}[{See \cite{AlexanderssonPanova2016}}]
 For any area sequence $\avec$ of length $n$,
 \begin{equation}\label{eq:omegaTransposed}
  \omega\LLT_{\avec}(\xvec;q) = q^{a_1+a_2+\dotsb+a_n}\LLT_{\avec^T}(\xvec;1/q)
 \end{equation}
where $\avec^T$ denotes the transpose of the Dyck diagram.
\end{proposition}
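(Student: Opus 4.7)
The plan is to establish the identity via the fundamental quasisymmetric expansion of $\LLT_\avec$. First, I would standardize each coloring $\kappa : [n] \to \setN$ to a permutation $w = \std(\kappa) \in \symS_n$ using the tie-breaking convention that, for equal colors $\kappa(i) = \kappa(j)$ with $i < j$, one sets $w(i) > w(j)$. This convention guarantees that $\asc(\kappa)$ equals $\asc_\avec(w) \coloneqq |\{(i,j) \in E(\Gamma_\avec) : i<j,\ w(i)<w(j)\}|$: equal colors across a graph edge become descents of $w$, hence are correctly excluded from the ascent count. Collecting colorings by their standardization yields the expansion
\[
  \LLT_\avec(\xvec;q) \;=\; \sum_{w \in \symS_n} q^{\asc_\avec(w)}\, \gessel_{n,\, [n-1]\setminus \Des(w^{-1})}(\xvec).
\]

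Next, using the well-known involution $\omega \gessel_{n,S} = \gessel_{n,[n-1]\setminus S}$, I obtain
\[
  \omega \LLT_\avec(\xvec;q) \;=\; \sum_{w \in \symS_n} q^{\asc_\avec(w)}\, \gessel_{n,\, \Des(w^{-1})}(\xvec).
\]
I would then introduce the bijection $\symS_n \to \symS_n$ sending $w$ to the permutation $w'$ defined by $w'(i) = w(n+1-i)$. This bijection implements the vertex relabeling $j \mapsto n+1-j$ induced by Dyck diagram transposition. A direct computation shows $(w')^{-1}(k) = n+1-w^{-1}(k)$, which immediately gives $\Des((w')^{-1}) = [n-1] \setminus \Des(w^{-1})$. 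Furthermore, an edge $(i,j) \in E(\Gamma_\avec)$ with $i<j$ is an ascent of $w$ if and only if the corresponding edge $(n+1-j,\,n+1-i) \in E(\Gamma_{\avec^T})$ is \emph{not} an ascent of $w'$; summing over all edges yields $\asc_\avec(w) + \asc_{\avec^T}(w') = |E(\Gamma_\avec)| = a_1+\dotsb+a_n$.

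Substituting the bijection into the expression for $\omega \LLT_\avec$ then gives
\[
  \omega \LLT_\avec(\xvec;q) \;=\; q^{a_1+\dotsb+a_n}\! \sum_{w' \in \symS_n} q^{-\asc_{\avec^T}(w')} \gessel_{n,\, [n-1]\setminus \Des((w')^{-1})}(\xvec) \;=\; q^{a_1+\dotsb+a_n}\, \LLT_{\avec^T}(\xvec;1/q),
\]
as required. The main technical subtlety lies in the careful alignment between the tie-breaking rule for standardization and the graph structure, to ensure that coloring ascents match permutation ascents. Once this convention is fixed, the bijection $w \mapsto w'$ simultaneously complements $\Des(w^{-1})$ and anti-matches ascents, and the identity follows by routine rewriting.
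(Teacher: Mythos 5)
Your argument is correct and complete. The paper itself gives no proof of this proposition (it defers to \cite{AlexanderssonPanova2016}), and your route --- standardizing colorings with the reverse tie-breaking to get the expansion into fundamental quasisymmetric functions, complementing descent sets under $\omega$, and the vertex-reversal bijection $w\mapsto w'$ which anti-matches ascents and complements $\Des(w^{-1})$ --- is essentially the standard argument behind the cited result. One point worth making explicit: the rule $\omega\gessel_{n,S}=\gessel_{n,[n-1]\setminus S}$ is the complementation involution on quasisymmetric functions, which agrees with $\omega$ only when applied to symmetric functions (the usual extension of $\omega$ to QSym is reverse-complementation); since $\LLT_{\avec}(\xvec;q)$ is symmetric, applying complementation termwise to its $\gessel$-expansion is legitimate, but this justification deserves a sentence.
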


In \cite{AlexanderssonPanova2016}, we gave a proof that $\omega\LLT_{\avec,\svec}(\xvec;q+1)$
is positive in the power-sum basis. 
It also follows from a much more general result given in \cite{AlexanderssonSulzgruber2018}.
Note that if $f(\xvec)$ is $\elementaryE$-positive, then $\omega f(\xvec)$
is positive in the power-sum basis.
Later in \cref{prop:pexpansion}, the power-sum expansion of 
$\omega\LLT_{\avec,\svec}(\xvec;q+1)$ is stated explicitly.

\medskip 

The following lemma connects the LLT polynomials with the chromatic quasisymmetric
functions $\chrom_\avec(\xvec;q)$ introduced in \cite{ShareshianWachs2012}.
The function $\chrom_\avec(\xvec;q)$ is defined exactly as $\LLT_\avec(\xvec;q)$
but the sum in \cref{eq:LLTdef} is taken only over \emph{proper} colorings of $\Gamma_\avec$,
so that no monochromatic edges are allowed.

\begin{lemma}[{Adaptation of \cite[Prop. 3.5]{CarlssonMellit2017}. See also \cite[Sec. 5.1]{HaglundHaimanLoehr2005}}]\label{lem:plethRelation}
Let $\avec$ be a Dyck diagram. Then
\begin{align} \label{eq:plethRelation}
 (q-1)^{-n} \LLT_\avec[\xvec(q-1);q] =  \chrom_\avec(\xvec;q),
\end{align}
where the bracket denotes a substitution using plethysm.
\end{lemma}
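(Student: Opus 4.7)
The plan is to prove this identity by working through the super-alphabet interpretation of the plethysm, following the strategy of Carlsson and Mellit. The underlying idea is that the substitution $\xvec \mapsto \xvec(q-1)$ splits each color $i$ into two ``types'' — a positive copy with weight $q x_i$ and a negative copy with weight $-x_i$ — and then summing over the type-assignments within each color class produces either $(q-1)^n$ times a proper coloring contribution, or zero.

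First I would recall the power-sum action $\psumP_k[\xvec(q-1)] = (q^k-1)\psumP_k(\xvec)$ and use it to define the left-hand side rigorously. Then I would extend the coloring sum defining $\LLT_\avec(\xvec;q)$ to a super-alphabet $\tilde{\xvec} = \{x_i^+, x_i^- : i \geq 1\}$ with a fixed total order (grouping same-$i$ pairs with $(+,i)$ adjacent to $(-,i)$) and weights $w(x_i^+)=qx_i$, $w(x_i^-)=-x_i$. The super-ascent statistic is then defined using this total order.

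Next, I would group super-colorings $\kappa^{\text{sup}}$ by their \emph{collapsed} coloring $\kappa \colon V(\Gamma_\avec)\to\setN$ obtained by forgetting signs. The super-ascent statistic decomposes as $\asc(\kappa) + \asc_{\text{mono}}(\epsilon)$, where the second term counts ascents arising only from the sign-pattern $\epsilon$ within monochromatic color classes of $\kappa$. The sum over $\epsilon$ therefore factors over monochromatic color classes.

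Finally, I would argue the two cases:
\begin{itemize}
\item If $\kappa$ is a proper coloring of $\Gamma_\avec$, every color class is an independent set, so the sign choices at the $n$ vertices are independent and each contributes a factor $q + (-1) = q-1$, yielding $(q-1)^n q^{\asc(\kappa)} \xvec^\kappa$.
\item If $\kappa$ is improper, the contribution of each non-trivial color class must collapse after summing against the other color classes. Here one invokes a sign-reversing involution (or an inclusion-exclusion over subsets of monochromatic edges) that exploits the $(+,i) \mapsto (-,i)$ swap and the resulting ascent-count changes to cancel all improper collapsed colorings against each other.
\end{itemize}
Assembling these contributions gives $\LLT_\avec[\xvec(q-1);q] = (q-1)^n \chrom_\avec(\xvec;q)$, which is the claim after dividing by $(q-1)^n$.

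The main obstacle is step three above: verifying that the naive super-alphabet evaluation of $\LLT_\avec$ matches its plethystic extension (since the LLT combinatorial formula is not itself super-symmetric in the direct sense — small calculations with $e_2[\xvec(q-1)]$ already show that one needs the $h_k$ on the negative part rather than a strict monomial sum), and designing the sign-reversing involution on monochromatic color classes with multiple edges so that the ascent statistic transforms predictably. As a fallback, one can instead expand both sides in the power-sum basis using the Athanasiadis-style orientation formulas for $\chrom_\avec$ and the author's orientation formula for $\LLT_\avec$ (compare \cref{prop:pexpansion}), and verify that after the substitution $\psumP_\lambda \mapsto (q-1)^{-n}\prod_i(q^{\lambda_i}-1)\psumP_{\lambda_i} = \prod_i [\lambda_i]_q\, \psumP_{\lambda_i}$, the generating function over all orientations of $\Gamma_\avec$ collapses termwise to that over acyclic orientations — with the $[\lambda_i]_q$ factors precisely absorbing the contribution of directed cycles.
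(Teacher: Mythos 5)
The paper does not prove this lemma at all --- it is quoted as an adaptation of \cite[Prop.~3.5]{CarlssonMellit2017} --- so your proposal must stand on its own, and as written it has a genuine gap, one you yourself flag as ``the main obstacle''. The crux of the lemma is precisely the bridge between the plethystic evaluation $\LLT_\avec[\xvec(q-1);q]$ and a super-coloring sum, i.e.\ the superization theorem for LLT polynomials, and your setup does not yet deliver it. Concretely, with your stated conventions (split color $i$ into $i^+$ of weight $qx_i$ and $i^-$ of weight $-x_i$, keep the ascent statistic induced by a fixed total order with strict inequalities) the cancellation in your second bullet fails already for $\avec=(0,1)$: a monochromatic coloring of the single edge has the four sign patterns contributing $q^2-q^2-q+1=(1-q)x_i^2\neq 0$. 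The correct mechanism needs the twisted tie convention on the negative letters (two equal negative letters along an edge of $\Gamma_\avec$ must count as an ascent --- exactly the ``$\completeH_k$ on the negative alphabet'' phenomenon you allude to), and with that convention one must then prove that the twisted super sum equals the plethysm; the standard route is to expand $\LLT_\avec$ into Gessel fundamental quasisymmetric functions via standardization of colorings and use the known super-evaluation of the fundamentals, which is how Carlsson--Mellit and Haglund--Haiman--Loehr proceed. Since this is the entire content of the lemma, deferring it leaves a plan rather than a proof; likewise the sign-reversing involution on improper colorings is invoked but not constructed (with the right tie convention it does exist --- flip the sign of a suitable extremal vertex inside a monochromatic class --- but the ascent bookkeeping is exactly what has to be checked).

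The fallback route also has concrete problems. First, the substitution is misstated: $(q-1)^{-n}\psumP_\lambda[\xvec(q-1)] = \prod_i \frac{q^{\lambda_i}-1}{(q-1)^{\lambda_i}}\,\psumP_{\lambda_i}$, which differs from $\prod_i[\lambda_i]_q\,\psumP_{\lambda_i}$ by $(q-1)^{\ell(\lambda)-n}$, so the claimed ``absorption'' of cycle contributions by $[\lambda_i]_q$ factors is not even dimensionally set up correctly. Second, the termwise collapse from all orientations to acyclic orientations is asserted, not proved, and it is the whole difficulty. Third, beware circularity: in this paper the unicellular case of \cref{prop:pexpansion} is described as a consequence of the very lemma you are proving, so you would need to rely on the independent derivation in \cite{AlexanderssonSulzgruber2018} for that expansion.
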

From this formula, together with \cref{conj:main}, 
we have a novel conjectured formula for the chromatic quasisymmetric functions:
\begin{align}
\chrom_\avec(\xvec;q) = \sum_{\theta \in O(\avec)} (q-1)^{\asc(\theta)} 
\frac{\elementaryE_{\pi(\theta)}[\xvec(q-1)]}{(q-1)^n}.
\end{align}
Perhaps it is possible to do some sign-reversing involution together with plethysm manipulations
to obtain the $\elementaryE$-expansion of $\chrom_\avec(\xvec;q)$
and thus find a candidate formula for the Shareshian--Wachs conjecture.
\begin{conjecture}[{Shareshian--Wachs \cite{ShareshianWachs2012,ShareshianWachs2016}}]\label{conj:shareshianWachs}
There is some partition-valued statistic $\rho$ on \emph{acyclic orientations} of $\Gamma_\avec$, such that
\[
\chrom_\avec(\xvec;q) = \sum_{\theta \in AO(\avec)} q^{\asc(\theta)} \elementaryE_{\rho(\theta)}(\xvec).
\]
Here $AO(\avec)$ denotes the set of acyclic orientations of $\Gamma_\avec$.
\end{conjecture}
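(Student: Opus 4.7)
The plan is to derive \cref{conj:shareshianWachs} from the main conjecture \cref{conj:main} through the plethystic identity \cref{lem:plethRelation}, essentially executing the program sketched in the paragraph immediately preceding the conjecture. Substituting \eqref{eq:conjFormula} into \eqref{eq:plethRelation} yields, assuming \cref{conj:main}, the identity
\[
\chrom_\avec(\xvec;q) = \sum_{\theta \in O(\avec)} (q-1)^{\asc(\theta)-n}\, \elementaryE_{\hrvpp(\theta)}[\xvec(q-1)],
\]
so the task reduces to transforming this sum over \emph{all} orientations into a manifestly $\elementaryE$-positive sum over \emph{acyclic} orientations.

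The first step is to expand each plethysm $\elementaryE_k[\xvec(q-1)]$ explicitly. Using $p_j[\xvec(q-1)] = (q^j-1)p_j$ combined with the power-sum expansion of $\elementaryE_\lambda$, one can reinterpret $\elementaryE_{\hrvpp(\theta)}[\xvec(q-1)]$ as a signed sum indexed by enriched combinatorial objects: roughly, pairs $(\theta,\sigma)$ where $\sigma$ records, for each plethystic factor $(q^j-1)$, whether it resolves as $q^j$ or as $-1$, together with the data of a partition refinement of each block of $\hrvpp(\theta)$. After distributing, the overall factor $(q-1)^{\asc(\theta)-n}$ should combine cleanly with these $\sigma$-data, because $\asc(\theta) + \text{(size of fixed anti-ascents)}$ relates to the total power of $(q-1)$ available.

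The second step is to construct a sign-reversing involution $\iota$ on these enriched pairs whose fixed points are supported on acyclic orientations decorated with canonical auxiliary data, and such that the surviving contribution from each fixed point is exactly $q^{\asc(\theta)}\elementaryE_{\rho(\theta)}(\xvec)$ for an explicit partition-valued statistic $\rho(\theta)$. A natural starting point is to exploit the presence of a directed cycle: in any orientation that contains a directed cycle, one selects a canonical cycle (say lexicographically smallest among shortest) and pairs $(\theta,\sigma)$ with $(\theta',\sigma')$ via a single edge flip on this cycle together with a compensating toggle of one $\sigma$-bit. This is analogous in spirit to the classical $\chrom_G(-1)$ argument of Stanley, where sign-cancellation on cyclic orientations isolates acyclic ones.

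The hard part is, of course, the construction of $\iota$ and the identification of $\rho$. As the comparison of \eqref{eq:conjFormulaIntro} and \eqref{eq:chargeFormulaIntro} already shows, there are several inequivalent partition-valued statistics on orientations that produce the same symmetric function, so the bookkeeping between $\hrvpp$, the plethystic signs, and the acyclic statistic $\rho$ is delicate and unlikely to be uniquely forced by the algebra alone. A sensible way forward is to first compute the expanded right-hand side for short area sequences, conjecture a clean combinatorial description of $\rho(\theta)$ (for instance, some variant of $\hrvpp$ restricted to the acyclic setting, possibly interacting with sinks and sources as in \cite{HaglundWilson2017}), verify that the $q\to 1$ specialization recovers Stanley's classical acyclic-orientation formula as a sanity check, and only then formalize the involution. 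One should expect this last step to require genuinely new combinatorial input, since a complete execution of this plan would in particular settle the Stanley--Stembridge conjecture.
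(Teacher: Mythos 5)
There is nothing in the paper for your argument to be measured against: \cref{conj:shareshianWachs} is stated as an open conjecture of Shareshian--Wachs, and the paper offers no proof of it --- only the speculative remark, in the paragraph just before the statement, that ``perhaps it is possible to do some sign-reversing involution together with plethysm manipulations'' starting from the formula $\chrom_\avec(\xvec;q) = \sum_{\theta \in O(\avec)} (q-1)^{\asc(\theta)-n}\elementaryE_{\hrvpp(\theta)}[\xvec(q-1)]$, which is itself conditional on \cref{conj:main}. Your proposal essentially restates that remark in more detail, and it does not close the gap between a plan and a proof. Concretely: (i) your starting identity already assumes \cref{conj:main}, which is unproved in the paper (only the $\elementaryE$-positivity part was later established by D'Adderio, not the precise $\hrvpp$-formula, and in any case you may not assume one open conjecture to establish another); (ii) the sign-reversing involution $\iota$ and, crucially, the statistic $\rho$ on acyclic orientations are never constructed --- they are precisely the content of the conjecture, since the assertion is the existence of such a $\rho$ with coefficients in $\setN[q]$; and (iii) the proposed cancellation mechanism (flip an edge on a canonical directed cycle and toggle a plethystic sign) is not shown to preserve the $\hrvpp$-data or the $q$-weight, and there is no argument that the surviving terms organize themselves by acyclic orientations graded by $\asc$. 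You acknowledge all of this yourself when you say the last step needs genuinely new combinatorial input and would settle Stanley--Stembridge; that admission is accurate, and it means what you have written is a research programme, not a proof of the statement.

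The correct reading of this item is that it is background: the paper records the Shareshian--Wachs conjecture to motivate \cref{conj:main} as its LLT analogue, and explicitly notes that no combinatorial candidate for the $\elementaryE$-expansion of $\chrom_\avec(\xvec;q)$ is known. If you wish to salvage something checkable from your outline, the only unconditional step available with the paper's tools is the displayed plethystic identity itself (which follows from \cref{lem:plethRelation} once \cref{conj:main} is granted); everything beyond that, in your write-up as in the paper, is conjecture.
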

Note that the original Stanley--Stembridge conjecture is closely related to the $q=1$ case,
which was stated for the incomparability graphs of $3+1$-avoiding posets.

\begin{problem}
 Prove that the family $\LLTc_\avec(\xvec;q)$ defined in \eqref{eq:conjFormula} fulfills the 
 involution identity \eqref{eq:omegaTransposed}.
\end{problem}

\section{Recursive properties of LLT polynomials}\label{sec:recProp}

We shall now cover several recursive relations for the vertical-strip LLT polynomials.
Our proofs are bijective and directly use the combinatorial definition as a weighted sum over vertex colorings.
We illustrate these bijections with Dyck diagrams where 
only the relevant vertices and edges are shown.

The reader thus is encouraged to interpret a diagram as a weighted sum over colorings,
where decorations of the diagrams indicate restrictions of the colorings, or how the colorings contribute to $\asc(\cdot)$.
For example, given an edge $\epsilon$ of $\Gamma_{\avec,\svec}$, there are two possible cases.
Either $\epsilon$ contributes to the number of ascents, or it does not.
We can illustrate this simply as
\[
\begin{ytableau}
\;  &  *(yellow)\; \\
*(yellow)\;
\end{ytableau} 
=
 \begin{ytableau}
*(lightgray) \downarrow &  *(yellow)\; \\
*(yellow)\;
\end{ytableau}
+
q\;
\begin{ytableau}
*(lightgray) \rightarrow &  *(yellow)\; \\
*(yellow)\;
\end{ytableau}
\]
where the white box is the edge $\epsilon$ and $\downarrow$ indicates an edge that cannot be an ascent.
Note that the vertices shown do not need to have consecutive labels --- the intermediate vertices (and edges) are simply not shown.
Shaded boxes are not edges of $\Gamma_\avec$ and therefore does not contribute to ascents of the coloring.
To conclude, the class of diagrams considered here may be described as follows:
\begin{itemize}
 \item The white boxes are determined by some area sequence $\avec$, so that each white box is an edge in $\Gamma_\avec$.
 \item Every edge (box) is either white or shaded.
 \item Only white boxes contribute to the ascent statistic.
 \item A box (white or shaded) may contain an arrow, a $\rightarrow$ or $\downarrow$,
 imposing a strict or weak inequality requirement, respectively, on the colorings.
 In particular, a \emph{white} box containing a $\rightarrow$ is thus a sum over colorings where 
 this particular edge must be an ascent.
\end{itemize}
Note that this is a slightly broader class of diagrams than the class of vertical-strip diagrams, as the 
additional arrows impose more restrictions on the colorings.

\medskip

The following recursive relationship allows us to express vertical-strip LLT polynomials
as linear combinations of unicellular LLT polynomials.
Later in \cref{prop:verticalStripToUnicellularConj}, 
we prove that the polynomials in \cref{eq:conjFormula} satisfy the same recursion.
We use the notation $\avec\cup\{\epsilon\}$ to describe the area sequence of the unit interval graph
where the edge $\epsilon$ has been added to the edges of $\Gamma_\avec$.
The notation $\svec \cup \{\epsilon\}$ for strict edges is interpreted in a similar manner.

\begin{proposition}\label{prop:verticalStripToUnicellular}
If $\Gamma_{\avec,\svec}$ is a vertical strip diagram,
and $\epsilon$ is a non-strict outer corner of $\Gamma_{\avec,\svec}$, then
\begin{align}\label{eq:verticalStripToUnicellular}
 \LLT_{\avec\cup\{\epsilon\},\svec}(\xvec;q+1) = \LLT_{\avec,\svec}(\xvec;q+1) + q\LLT_{\avec, \svec \cup \{\epsilon\}}(\xvec;q+1).
\end{align}
\end{proposition}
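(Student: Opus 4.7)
The plan is to work directly from the combinatorial definition in \eqref{eq:LLTdef} and exploit the fact that, when $\epsilon = (u,v)$ is a non-strict outer corner of $\Gamma_{\avec,\svec}$, the set of valid colorings for $\Gamma_{\avec\cup\{\epsilon\},\svec}$ coincides exactly with the set of valid colorings for $\Gamma_{\avec,\svec}$. Adding a non-strict edge changes neither the vertex set nor the strictness constraints, so the only effect on a given coloring $\kappa$ is a possible additional contribution of $1$ to the ascent statistic, arising precisely when $\kappa(u) < \kappa(v)$. In symbols, $\asc_{\avec\cup\{\epsilon\},\svec}(\kappa) = \asc_{\avec,\svec}(\kappa) + [\kappa(u) < \kappa(v)]$, where $[\,\cdot\,]$ is the Iverson bracket.

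First I would write
\[
\LLT_{\avec\cup\{\epsilon\},\svec}(\xvec;q+1) = \sum_{\kappa} \xvec^\kappa (q+1)^{\asc_{\avec,\svec}(\kappa)}\,(q+1)^{[\kappa(u)<\kappa(v)]},
\]
with the sum over valid colorings of $\Gamma_{\avec,\svec}$. Since the bracket takes values in $\{0,1\}$, we have the trivial but crucial identity $(q+1)^{[\kappa(u)<\kappa(v)]} = 1 + q\cdot [\kappa(u)<\kappa(v)]$. Distributing yields
\[
\sum_{\kappa} \xvec^\kappa (q+1)^{\asc_{\avec,\svec}(\kappa)} \;+\; q \sum_{\kappa:\,\kappa(u)<\kappa(v)} \xvec^\kappa (q+1)^{\asc_{\avec,\svec}(\kappa)}.
\]
The first sum is exactly $\LLT_{\avec,\svec}(\xvec;q+1)$. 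For the second, restricting to colorings with $\kappa(u) < \kappa(v)$ is the same as imposing that $\epsilon$ be a strict edge; because strict edges do not contribute to $\asc$, the exponent is unchanged, and the second term is therefore $q\LLT_{\avec,\svec\cup\{\epsilon\}}(\xvec;q+1)$, as desired.

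In the diagrammatic language introduced just before the proposition, the argument is precisely the single-box identity
\[
\begin{ytableau}
\;  &  *(yellow)\; \\
*(yellow)\;
\end{ytableau}
=
\begin{ytableau}
*(lightgray) \downarrow &  *(yellow)\; \\
*(yellow)\;
\end{ytableau}
+ q\;
\begin{ytableau}
*(lightgray) \rightarrow &  *(yellow)\; \\
*(yellow)\;
\end{ytableau}
\]
applied to the white box for the outer corner $\epsilon$, once the substitution $q \mapsto q+1$ has been performed on the weight on the left-hand side. There is essentially no obstacle here beyond verifying that the outer-corner hypothesis makes $\avec\cup\{\epsilon\}$ a legitimate area sequence and keeps $\svec\cup\{\epsilon\}$ a legitimate collection of strict edges of $\Gamma_{\avec\cup\{\epsilon\}}$; both are immediate from \cref{def:UIGraph}. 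The one thing to be mildly careful about is that $\epsilon$ is genuinely not already an edge of $\Gamma_\avec$, so there is no risk of double-counting an ascent — but this is part of the very definition of \emph{outer corner}.
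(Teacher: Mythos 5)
Your proof is correct and is essentially the argument the paper gives: you split colorings of $\Gamma_{\avec\cup\{\epsilon\},\svec}$ according to whether $\epsilon$ is an ascent, identify the ascending case (weight $q$ at the shifted variable) with the strict-edge polynomial $\LLT_{\avec,\svec\cup\{\epsilon\}}$, and the rest with $\LLT_{\avec,\svec}$ --- exactly the single-box diagram decomposition in the paper, which merely presents it after the substitution $q\mapsto q-1$ and a rearrangement of terms. No gaps; the Iverson-bracket phrasing is just a cleaner way of writing the same case split.
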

\begin{proof}
By shifting the variable $q$, the identity can be restated as
\begin{align}\label{eq:verticalStripToUnicellular2}
 \LLT_{\avec\cup\{\epsilon\},\svec}(\xvec;q) + \LLT_{\avec, \svec \cup \{\epsilon\}}(\xvec;q)= q\LLT_{\avec, \svec \cup \{\epsilon\}}(\xvec;q) + \LLT_{\avec,\svec}(\xvec;q),
\end{align}
which in (as sum over colorings) diagram form can be expressed as follows.
The two vertices shown are the vertices of $\epsilon$.
\[
\ytableausetup{boxsize=1.0em}
\begin{ytableau}
\; &  *(yellow)\;\\
*(yellow) \;
\end{ytableau}
+
\begin{ytableau}
*(lightgray) \rightarrow & *(yellow) \; \\
*(yellow) \;
\end{ytableau}
=
q\;
\begin{ytableau}
*(lightgray) \rightarrow & *(yellow)\;  \\
*(yellow)\;
\end{ytableau}
+
\begin{ytableau}
*(lightgray) &  *(yellow)\; \\
*(yellow)\;
\end{ytableau}
\]
The first and last diagram can be expanded into subcases,
\[
\ytableausetup{boxsize=0.8em}
\left(
\begin{ytableau}
*(lightgray) \downarrow &  *(yellow)\; \\
*(yellow)\;
\end{ytableau}
+
q\;
\begin{ytableau}
*(lightgray) \rightarrow &  *(yellow)\; \\
*(yellow)\;
\end{ytableau}
\right)
+
\begin{ytableau}
*(lightgray) \rightarrow & *(yellow)\; \\
*(yellow)\;
\end{ytableau}
=
q\;
\begin{ytableau}
*(lightgray) \rightarrow & *(yellow)\; \\
*(yellow)\;
\end{ytableau}
+
\left(
\begin{ytableau}
*(lightgray) \downarrow &  *(yellow)\;\\
*(yellow)\;
\end{ytableau}
+
\begin{ytableau}
*(lightgray) \rightarrow &  *(yellow)\;\\
*(yellow)\;
\end{ytableau}
\right)
\]
and here it is evident that both sides agree.
\end{proof}
The above recursion seem to relate to certain recursions on Catalan symmetric functions, 
see \cite[Prop. 5.6]{BlasiakMorsePunSummers2018}.
Catalan symmetric functions are very similar in nature to LLT polynomials.

\subsection{Lee's recursion}

In \cref{prop:verticalStripRec} below, we prove a recursion on certain LLT polynomials. 
We then show that this relation is equivalent to Lee's recursion, given in \cite[Thm 3.4]{Lee2018}.

\begin{definition}
Let $\avec$ be an area sequence of length $n\geq 3$. 
An edge $(i,j) \in E(\Gamma_\avec)$, $3\leq j \leq n$, is said to be \defin{admissible} if 
the following four conditions hold:
\begin{itemize}
 \item $i = j-a_j$
 \item $j=n$ or $a_j \geq a_{j+1} +1$
 \item $a_j \geq 2$,
 \item $a_i+1 = a_{i+1}$.
\end{itemize}
The last condition is automatically satisfied if the first three are true
and $\avec$ is abelian.
Note that if $(i,j)$ is admissible,
then for all $k<i$ or $k>i+1$ we have 
\begin{align}\label{eq:sameNumberOfEdges}
(k,i) \in E(\Gamma_\avec) \Leftrightarrow (k,i+1) \in E(\Gamma_\avec) 
 \text{ and }
 (i,k) \in E(\Gamma_\avec) \Leftrightarrow (i+1,k) \in E(\Gamma_\avec).
\end{align}
These properties are crucial in later proofs.
\end{definition}

\begin{example}
For the following diagram $\avec$, the edge $(2,5)$ is admissible. 
\[
 \begin{ytableau}
*(lightgray)  & *(lightgray) & *(lightgray) & *(lightgray) & \; & *(yellow) 6 \\
*(lightgray) & \; & \; & \; & *(yellow) 5 \\
*(lightgray) & \; & \; & *(yellow) 4 \\
 \; & \; & *(yellow) 3 \\
 \; & *(yellow) 2 \\
 *(yellow) 1
\end{ytableau}
\]
\end{example}
\medskip 

Let $\evec_j$ denote the $j$th unit vector.
\begin{proposition}\label{prop:verticalStripRec}
Suppose $(i,j)$ is an admissible edge of the area sequence $\avec$,
set $\avec^1 \coloneqq \avec-\evec_j$ and $\avec^2 \coloneqq \avec-2\evec_j$ 
and $\svec^1 \coloneqq \{(i,j)\}$, $\svec^2 \coloneqq \{(i+1,j)\}$.
Then 
\begin{align}\label{eq:verticalStripRec}
 \LLT_{\avec^1,\svec^1}(\xvec;q) = q\LLT_{\avec^2,\svec^2}(\xvec;q).
\end{align}
\end{proposition}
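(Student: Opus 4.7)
The plan is a bijective proof in the spirit of the proof of \cref{prop:verticalStripToUnicellular}: match each valid coloring of $\Gamma_{\avec^1,\svec^1}$ with a valid coloring of $\Gamma_{\avec^2,\svec^2}$, accounting for a single extra ascent that supplies the factor of $q$ on the right. The two graphs are identical except at the edges into $j$ from columns $i$ and $i+1$: on the left, $(i,j)$ is a strict edge and $(i+1,j)$ is an ordinary edge; on the right, $(i,j)$ is absent and $(i+1,j)$ is strict. So a valid LHS coloring $\kappa$ satisfies $\kappa(i)<\kappa(j)$, while a valid RHS coloring $\kappa'$ satisfies $\kappa'(i+1)<\kappa'(j)$.

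I would split LHS colorings into two cases. In Case~A, $\kappa(i+1)<\kappa(j)$, and I send $\kappa\mapsto\kappa$; the edge $(i+1,j)$ is ascending on the left but is strict (hence not counted) on the right, providing the missing factor of $q$. In Case~B, $\kappa(i+1)\geq\kappa(j)$; combined with $\kappa(i)<\kappa(j)$ this forces $\kappa(i)<\kappa(i+1)$, so $(i,i+1)$ is ascending. I send $\kappa\mapsto\kappa'$ where $\kappa'$ is $\kappa$ with the colors at $i$ and $i+1$ swapped. Then $\kappa'(i+1)=\kappa(i)<\kappa(j)=\kappa'(j)$, and the edge $(i,i+1)$ is no longer ascending, again accounting for the factor of $q$. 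The inverse is immediate: an RHS coloring $\kappa'$ with $\kappa'(i)<\kappa'(j)$ comes from Case~A, while $\kappa'(i)\geq\kappa'(j)$ forces $\kappa'(i)>\kappa'(i+1)$ and arises from Case~B by the same swap.

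The main technical point is verifying that the Case~B swap preserves ascents on all edges other than $(i,i+1)$, $(i,j)$, and $(i+1,j)$. This is exactly what the admissibility hypothesis ensures through \eqref{eq:sameNumberOfEdges}: every vertex $k\neq i,i+1$ is adjacent to $i$ precisely when it is adjacent to $i+1$, so the combined ascent contribution of each pair $\{(k,i),(k,i+1)\}$ or $\{(i,k),(i+1,k)\}$ is invariant under the swap. The four clauses of admissibility jointly guarantee this symmetry (below $i$ via $a_i+1=a_{i+1}$, above $j$ via $j=n$ or $a_j\geq a_{j+1}+1$, and strictly between via $a_j\geq 2$). Once this symmetry is in hand, the net ascent change in each case is exactly one, finishing the proof. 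Alternatively, the whole argument can be cast diagrammatically, as at the start of \cref{sec:recProp}, by depicting the relevant rows of the Dyck diagram near $i$, $i+1$, and $j$ and translating the case split into pictures.
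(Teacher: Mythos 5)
Your proof is correct and is essentially the paper's argument: your Case~A is the paper's cancellation of the identical terms with $\kappa(i+1)<\kappa(j)$ on both sides, and your Case~B is exactly the paper's color-swap at $i$ and $i+1$, justified by \eqref{eq:sameNumberOfEdges}. The only difference is presentational (an explicit two-case bijection rather than the diagrammatic subdivision and cancellation).
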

\begin{proof}
We use the diagram-type proof as before, now only showing the vertices $i$, $i+1$ and $j$.
The identity we wish to show is then presented as
\begin{align*}
\begin{ytableau}
*(lightgray) \rightarrow &  \; &  *(yellow)\;\\
\; &  *(yellow)\; \\
*(yellow)\;
\end{ytableau} &=
q \; \begin{ytableau}
*(lightgray) & *(lightgray) \rightarrow &  *(yellow)\;\\
\; &  *(yellow)\; \\
*(yellow)\;
\end{ytableau}.
\end{align*}
Both sides are considered as a weighted sum over colorings with restrictions indicated by $\to$. 
Subdividing these sums into subcases by forcing additional inequalities gives
\begin{align*}
q\;
\begin{ytableau}
*(lightgray) \rightarrow &  *(lightgray) \rightarrow &  *(yellow)\;\\
 \; &  *(yellow)\; \\
 *(yellow)\;
\end{ytableau}
+
\begin{ytableau}
*(lightgray) \rightarrow & *(lightgray) \downarrow &  *(yellow)\;\\
\; &  *(yellow)\; \\
*(yellow)\;
\end{ytableau} 
&=
q \; \left( 
\begin{ytableau}
*(lightgray) \rightarrow & *(lightgray) \rightarrow &  *(yellow)\;\\
\; &  *(yellow)\; \\
*(yellow)\;
\end{ytableau}
+
\begin{ytableau}
*(lightgray) \downarrow & *(lightgray) \rightarrow &  *(yellow)\;\\
\; &  *(yellow)\; \\
*(yellow)\;
\end{ytableau}
\right).
\end{align*}
Two terms cancel and additional inequalities follows by transitivity.
It therefore suffices to prove the following. 
\[
q\;
\begin{ytableau}
*(lightgray) \rightarrow &  *(lightgray) \downarrow &  *(yellow)\;\\
*(lightgray) \rightarrow &  *(yellow)\; \\
*(yellow)\;
\end{ytableau}
=
q\;
\begin{ytableau}
*(lightgray) \downarrow & *(lightgray) \rightarrow &  *(yellow)\;\\
*(lightgray) \downarrow &  *(yellow)\; \\
*(yellow)\;
\end{ytableau}
\]
Note that the additional $q$ in the left hand side appears due to the ascent $(i,i+1)$.

There is now a simple $q$-weight-preserving bijection between colorings on the diagram on the left hand side,
and colorings of the diagram on the right hand side.
For a coloring $\kappa$ in the left hand side, we have $\kappa(i) < \kappa(j)\leq \kappa(i+1)$,
while on the right hand side, we have $\kappa(i+1) < \kappa(j) \leq \kappa(i)$.
Hence, vertex $i$ and vertex $i+1$ are never assigned the same color.

The bijection is to simply swap the colors of the adjacent vertices $i$ and $i+1$.
The property in \cref{eq:sameNumberOfEdges} ensures that the number 
of ascending edges are preserved under this swap.
\end{proof}

\medskip 

\begin{corollary}[Local linear relation {\cite[Thm 3.4]{Lee2018}}]\label{prop:lee}
Let $\avec$ be an area sequence for which $(i,j)$ is admissible,
and let $\avec^0 \coloneqq \avec$, $\avec^1 \coloneqq \avec-\evec_j$ and $\avec^2 \coloneqq \avec-2\evec_j$.
Then 
\begin{align}\label{eq:leeFormula}
 \LLT_{\avec^0}(\xvec;q) - \LLT_{\avec^1}(\xvec;q) = 
 q(\LLT_{\avec^1}(\xvec;q) - \LLT_{\avec^2}(\xvec;q)).
\end{align}
\end{corollary}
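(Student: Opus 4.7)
The plan is to combine Proposition \ref{prop:verticalStripToUnicellular} (applied twice) with Proposition \ref{prop:verticalStripRec}. Substituting $q \mapsto q-1$ in Proposition \ref{prop:verticalStripToUnicellular} gives
\[ \LLT_{\avec \cup \{\epsilon\}, \svec}(\xvec;q) - \LLT_{\avec, \svec}(\xvec;q) = (q-1)\, \LLT_{\avec, \svec \cup \{\epsilon\}}(\xvec;q), \]
which is the main tool.

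I would apply this identity twice: first to $(\avec^1, \emptyset)$ with outer corner $\epsilon = (i,j)$, producing
\[ \LLT_{\avec^0}(\xvec;q) - \LLT_{\avec^1}(\xvec;q) = (q-1)\, \LLT_{\avec^1, \{(i,j)\}}(\xvec;q), \]
and then to $(\avec^2, \emptyset)$ with outer corner $\epsilon = (i+1,j)$, producing
\[ \LLT_{\avec^1}(\xvec;q) - \LLT_{\avec^2}(\xvec;q) = (q-1)\, \LLT_{\avec^2, \{(i+1,j)\}}(\xvec;q). \]
Multiplying the second identity by $q$ and substituting Proposition \ref{prop:verticalStripRec}, which states $\LLT_{\avec^1, \{(i,j)\}}(\xvec;q) = q\, \LLT_{\avec^2, \{(i+1,j)\}}(\xvec;q)$, immediately yields the target identity \eqref{eq:leeFormula}.

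The main bookkeeping obstacle is verifying that Proposition \ref{prop:verticalStripToUnicellular} is indeed applicable in each case, that is, that $(i,j)$ is a non-strict outer corner of $\Gamma_{\avec^1}$ and $(i+1,j)$ is a non-strict outer corner of $\Gamma_{\avec^2}$. These are short checks using the admissibility hypothesis: the condition $a_j \geq 2$ ensures $i + 1 < j$ in the first case and that $(i+1,j)$ and $(i,j-1)$ are edges of $\Gamma_{\avec^1}$, while in the second case either $a_j = 2$ gives $(i+1)+1 = j$ directly, or $a_j \geq 3$ forces the presence of $(i+2,j)$ and $(i+1, j-1)$, both of which follow from the area-sequence inequality $a_{j-1} \geq a_j - 1$. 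Once these checks are in hand, the corollary follows immediately.
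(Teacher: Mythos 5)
Your proof is correct and is essentially the paper's own argument: the paper likewise uses \cref{prop:verticalStripToUnicellular} (in its $q\mapsto q-1$ form) to rewrite the left side as $(q-1)\LLT_{\avec^1,\{(i,j)\}}(\xvec;q)$ and the right side as $q(q-1)\LLT_{\avec^2,\{(i+1,j)\}}(\xvec;q)$, and then concludes with \cref{prop:verticalStripRec}. Your explicit check that $(i,j)$ and $(i+1,j)$ are outer corners of $\Gamma_{\avec^1}$ and $\Gamma_{\avec^2}$ is a correct bookkeeping detail that the paper leaves implicit in its diagram notation.
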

\begin{proof}
We see that the left hand side of \eqref{eq:leeFormula} can be rewritten in diagram form using 
\cref{eq:verticalStripToUnicellular}:
\[
\textrm{LHS}=
\begin{ytableau}
\; &  \; &  *(yellow)\;\\
\; &  *(yellow)\; \\
*(yellow)\;
\end{ytableau}
-
\begin{ytableau}
*(lightgray) &  \; &  *(yellow)\;\\
\; &  *(yellow)\; \\
*(yellow)\;
\end{ytableau}
=
(q-1)
\begin{ytableau}
*(lightgray) \to &  \; &  *(yellow)\;\\
\; &  *(yellow)\; \\
*(yellow)\;
\end{ytableau}
\]
The right hand side is treated in a similar manner:
\[
\textrm{RHS}=
q\left(
\begin{ytableau}
*(lightgray) &  \; &  *(yellow)\;\\
\; &  *(yellow)\; \\
*(yellow)\;
\end{ytableau}
-
\begin{ytableau}
*(lightgray) &  *(lightgray) &  *(yellow)\;\\
\; &  *(yellow)\; \\
*(yellow)\;
\end{ytableau}
\right)
=
q(q-1)
\begin{ytableau}
*(lightgray)  &  *(lightgray) \to &  *(yellow)\;\\
\; &  *(yellow)\; \\
*(yellow)\;
\end{ytableau}
\]
The identity in \eqref{eq:verticalStripRec} now implies that $\textrm{LHS}=\textrm{RHS}$.
\end{proof}

\begin{example}
As an illustration of \cref{prop:lee}, we have $(i,j)=(2,5)$ and the following 
three Dyck diagrams.
\[
\avec^0=
\begin{ytableau}
*(lightgray)  & *(lightgray) & *(lightgray) & *(lightgray) & \; & *(yellow) 6 \\
*(lightgray) & \; & \; & \; & *(yellow) 5\\
*(lightgray) & \; & \; & *(yellow) 4\\
 \; & \; & *(yellow) 3\\
 \; & *(yellow) 2\\
 *(yellow) 1
\end{ytableau}
\quad 
\avec^1=
\begin{ytableau}
*(lightgray)  & *(lightgray) & *(lightgray) & *(lightgray) & \; & *(yellow) 6\\
*(lightgray) & *(lightgray) & \; & \; & *(yellow) 5\\
*(lightgray) & \; & \; & *(yellow)4 \\
 \; & \; & *(yellow) 3\\
 \; & *(yellow) 2\\
 *(yellow) 1
\end{ytableau}
\quad 
\avec^2=
\begin{ytableau}
*(lightgray)  & *(lightgray) & *(lightgray) & *(lightgray) & \; & *(yellow)6 \\
*(lightgray) & *(lightgray) & *(lightgray) & \; & *(yellow) 5\\
*(lightgray) & \; & \; & *(yellow) 4\\
 \; & \; & *(yellow)3 \\
 \; & *(yellow) 2\\
 *(yellow) 1
\end{ytableau}
\]
\end{example}

\subsection{The dual Lee recursion}

There is a ``dual'' version of \cref{prop:lee},
obtained by applying $\omega$ to both sides of \eqref{eq:leeFormula},
and then apply the relation in \eqref{eq:omegaTransposed}.
We shall now state this in more detail.

\begin{definition}
Let $\avec$ be an area sequence of length $n\geq 3$.
An edge $(i,j)$ is said to be \emph{dual-admissible} if the edge $(n+1-j,n+1-i)$
is admissible for $\avec^T$.
\end{definition}

We can then formulate the dual versions of \cref{prop:verticalStripRec}
and \cref{prop:lee}.
\begin{proposition}[The dual Lee recursion]\label{prop:dualLee}
Let $\avec$ be an area sequence for which $(i,j)$ is dual-admissible
and let $\avec^0 \coloneqq \avec$, $\avec^1 \coloneqq \avec-\evec_j$ and $\avec^2 \coloneqq \avec-\evec_j-\evec_{j-1}$.
Then 
\begin{align}\label{eq:dualVerticalStripRec}
 \LLT_{\avec^1,\svec^1}(\xvec;q) = q\LLT_{\avec^2,\svec^2}(\xvec;q)
\end{align}
and
\begin{align}\label{eq:dualLeeFormula}
 \LLT_{\avec^0}(\xvec;q) - \LLT_{\avec^1}(\xvec;q) = 
 q(\LLT_{\avec^1}(\xvec;q) - \LLT_{\avec^2}(\xvec;q))
\end{align}
where $\svec^1 \coloneqq \{(i,j)\}$ and $\svec^2 \coloneqq \{(i,j-1)\}$.
\end{proposition}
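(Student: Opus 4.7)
The plan is to mirror the diagrammatic arguments of \cref{prop:verticalStripRec,prop:lee}, but with the pair of ``symmetric'' adjacent vertices now being $(j-1, j)$ instead of $(i, i+1)$. First I would unpack what dual-admissibility of $(i,j)$ says about $\Gamma_\avec$. Since $(n+1-j, n+1-i)$ is admissible for $\avec^T$, the four admissibility conditions translate (via the correspondence $a^T_{n+1-v} = \deg^{\mathrm{out}}_{\Gamma_\avec}(v)$) to: vertex $i$ has outgoing edges exactly to $i+1, \dotsc, j$ (so $j - i \geq 2$), vertex $i-1$ has strictly fewer outgoing edges than vertex $i$, vertex $j$ has exactly one fewer outgoing edge than vertex $j-1$, and the topmost edge of column $j$ is $(i,j)$. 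A short calculation then shows $a_{j-1} = j-1-i$, so the topmost edge of column $j-1$ is $(i, j-1)$, and the consecutive-neighborhood structure of unit interval graphs gives the dual analogue of \eqref{eq:sameNumberOfEdges}: vertices $j-1$ and $j$ share the same adjacencies with every other vertex $k \notin \{j-1, j\}$.

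To prove \eqref{eq:dualVerticalStripRec}, I would redraw the diagram used in the proof of \cref{prop:verticalStripRec}, now displaying the three relevant vertices $i < j-1 < j$, with $(i,j)$ strict on the left side and $(i,j-1)$ strict on the right side. Splitting each side into subcases by the relative order of $\kappa(j-1)$ and $\kappa(j)$ and cancelling the common configuration reduces the desired identity to a bijection between colorings satisfying $\kappa(i) < \kappa(j-1) \leq \kappa(j)$ on the left and $\kappa(i) < \kappa(j) \leq \kappa(j-1)$ on the right. Swapping the colors at $j-1$ and $j$ is weight-preserving by the adjacency symmetry noted above, and the extra factor of $q$ on the right accounts for the ascent on the edge $(j-1, j)$, which is always present on the left configuration but never on the right.

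Once \eqref{eq:dualVerticalStripRec} is in hand, \eqref{eq:dualLeeFormula} follows exactly as in the proof of \cref{prop:lee}: \cref{prop:verticalStripToUnicellular} gives $\LLT_{\avec^0} - \LLT_{\avec^1} = (q-1)\LLT_{\avec^1, \svec^1}$ (with outer corner $\epsilon = (i,j)$ of $\Gamma_{\avec^1}$) and $\LLT_{\avec^1} - \LLT_{\avec^2} = (q-1)\LLT_{\avec^2, \svec^2}$ (with outer corner $\epsilon = (i, j-1)$ of $\Gamma_{\avec^2}$, whose outer-corner property I would verify using the value $a_{j-1} = j-1-i$ computed above). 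Dividing by $q-1$, the claim reduces to \eqref{eq:dualVerticalStripRec}.

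The main obstacle will be the first step, namely carefully extracting the combinatorial content of dual-admissibility, since the transpose twist makes the conditions less obviously symmetric in the primal picture, and then verifying that $(i, j-1)$ is indeed an outer corner of $\Gamma_{\avec^2}$ so that \cref{prop:verticalStripToUnicellular} applies. I note that \eqref{eq:dualLeeFormula} alone can be obtained more quickly by applying $\omega$ to \cref{prop:lee} for $\avec^T$ with admissible edge $(n+1-j, n+1-i)$ and invoking \eqref{eq:omegaTransposed}, but this shortcut does not yield \eqref{eq:dualVerticalStripRec}, since the involution identity in the excerpt is stated only for unicellular LLT polynomials.
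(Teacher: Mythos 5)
Your overall plan is the right one and is in the spirit of the paper's (very brief) proof, which only remarks that the identities follow either by applying $\omega$ or by a diagram bijection dual to \cref{prop:verticalStripRec}. Your unpacking of dual-admissibility is essentially correct: it forces $a_j=j-i$ and (using the second, out-degree condition) $a_{j-1}=j-1-i$, so both boxes being removed are leftmost in their rows, $j-i\geq 2$, and the translated fourth condition gives exactly the adjacency symmetry of the vertices $j-1$ and $j$ with every $k\notin\{j-1,j\}$ in $\Gamma_{\avec^2}$. The deduction of \eqref{eq:dualLeeFormula} from \eqref{eq:dualVerticalStripRec} via \cref{prop:verticalStripToUnicellular} is also fine, including the outer-corner checks (note the degenerate case $j-i=2$, where $(i,j-1)=(i,i+1)$ is an outer corner by the first alternative of the definition).

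However, the central reduction in your proof of \eqref{eq:dualVerticalStripRec} contains a genuine error. The correct dualization of the argument in \cref{prop:verticalStripRec} splits the left-hand side on the white box $(i,j-1)$ and the right-hand side on the empty box $(i,j)$; after cancelling the common term, the residual colorings satisfy $\kappa(j-1)\leq\kappa(i)<\kappa(j)$ on the left and $\kappa(j)\leq\kappa(i)<\kappa(j-1)$ on the right, i.e.\ the color of $i$ is \emph{sandwiched} between the colors of the two interchangeable vertices, exactly as $\kappa(j)$ was sandwiched between $\kappa(i)$ and $\kappa(i+1)$ in the primal proof; the swap at $j-1,j$ then preserves all ascents, with the forced ascent at $(j-1,j)$ on the left matching the prefactor $q$ on the right. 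The residual identity you state instead --- left configurations $\kappa(i)<\kappa(j-1)\leq\kappa(j)$ against right configurations $\kappa(i)<\kappa(j)\leq\kappa(j-1)$ --- is not what remains after cancellation and is in fact false: already for $\avec=(0,1,2,2)$ with dual-admissible edge $(i,j)=(1,3)$ and three colors the two weighted sums disagree, and correspondingly your claim that $(j-1,j)$ is ``always'' an ascent in your left configuration fails whenever $\kappa(j-1)=\kappa(j)$. Replacing your splitting (on the relative order of $\kappa(j-1)$ and $\kappa(j)$) by the one above repairs the proof, and the rest of your argument, including the adjacency-symmetry justification of the swap, then goes through; you should also record the boundary alternative $i=1$ coming from the ``$j'=n$'' clause of admissibility.
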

\begin{proof}[Proof sketch]
We can either prove these identities by applying $\omega$ as outlined above, 
or bijectively using diagrams. We leave out the details.
\end{proof}

\begin{example}
\cref{prop:verticalStripRec} applies in the following generic situation.
Here, the edge $(x,z)$ is an admissible edge.
The crucial condition in \eqref{eq:sameNumberOfEdges} states that
the area of the rows with vertices $x$ and $y$ in the diagram differ by exactly one.
\begin{align}\label{eq:leeDiagrams}
(\avec^1,\svec^1)=
\begin{tikzpicture}[baseline=(current bounding box.center)]
\draw[step=1em, gray, very thin] (-0.001,0) grid (9em, 9em);
\fill[gray,x=1em,y=1em] (5,8)--(5,9)--(0,9)--(0,0)--(1,0)--(1,7)--(3,7)--(3,8)--(5,8);
\fill[gray,x=1em,y=1em] (3.01,7.99)--(3.99,7.99)--(3.99,7.01)--(3.01,7.01);
%
\draw[black,thick,x=1em,y=1em] (3,2)--(4,2)--(4,3)--(5,3)--(5,4)--(6,4)--(6,5)--(7,5)--(7,6)--(8,6)--(8,7)--(9,7)--(9,8);
\draw[black,thick,x=1em,y=1em] (3,2)--(3,1)--(2,1)--(2,0);
\draw[black, thick,x=1em,y=1em] (3,7)--(3,8)--(5,8);
\draw[black, thick,x=1em,y=1em] (1,2)--(1,4);
\node[x=1em,y=1em] (1) at (3.5, 7.5) {$\rightarrow$};
\node[x=1em,y=1em] (3) at (3.5, 2.5) {x};
\node[x=1em,y=1em] (4) at (4.5, 3.5) {y};
\node[x=1em,y=1em] (5) at (8.5, 7.5) {z};
\end{tikzpicture}
\qquad
(\avec^2,\svec^2)=
\begin{tikzpicture}[baseline=(current bounding box.center)]
\draw[step=1em, gray, very thin] (-0.001,0) grid (9em, 9em);
\fill[gray,x=1em,y=1em] (5,8)--(5,9)--(0,9)--(0,0)--(1,0)--(1,7)--(3,7)--(3,8)--(5,8);
\fill[gray,x=1em,y=1em] (3.01,7.99)--(3.99,7.99)--(3.99,7.01)--(3.01,7.01);
\fill[gray,x=1em,y=1em] (4.01,7.99)--(4.99,7.99)--(4.99,7.01)--(4.01,7.01);
%
%
\draw[black, thick,x=1em,y=1em] (3,2)--(4,2)--(4,3)--(5,3)--(5,4)--(6,4)--(6,5)--(7,5)--(7,6)--(8,6)--(8,7)--(9,7)--(9,8);
\draw[black,thick,x=1em,y=1em] (3,2)--(3,1)--(2,1)--(2,0);
\draw[black, thick,x=1em,y=1em] (3,7)--(3,8)--(5,8);
\draw[black, thick,x=1em,y=1em] (1,2)--(1,4);
\node[x=1em,y=1em] (2) at (4.5, 7.5) {$\rightarrow$};
\node[x=1em,y=1em] (3) at (3.5, 2.5) {x};
\node[x=1em,y=1em] (4) at (4.5, 3.5) {y};
\node[x=1em,y=1em] (5) at (8.5, 7.5) {z};
\end{tikzpicture}
\end{align}
Similarly, the dual recursion in \cref{eq:dualVerticalStripRec} applies 
in the following situation, where $(x,z)$ is a dual-admissible edge:
\begin{align}\label{eq:dualLeeDiagrams}
(\avec^1,\svec^1)=
\begin{tikzpicture}[baseline=(current bounding box.center)]
\begin{scope}[yscale=-1,rotate=90]
\draw[step=1em, gray, very thin] (-0.001,0) grid (9em, 9em);
\fill[gray,x=1em,y=1em] (5,8)--(5,9)--(0,9)--(0,0)--(1,0)--(1,7)--(3,7)--(3,8)--(5,8);
\fill[gray,x=1em,y=1em] (3.01,7.99)--(3.99,7.99)--(3.99,7.01)--(3.01,7.01);
%
\draw[black,thick,x=1em,y=1em] (3,2)--(4,2)--(4,3)--(5,3)--(5,4)--(6,4)--(6,5)--(7,5)--(7,6)--(8,6)--(8,7)--(9,7)--(9,8);
\draw[black,thick,x=1em,y=1em] (3,2)--(3,1)--(2,1)--(2,0);
\draw[black, thick,x=1em,y=1em] (3,7)--(3,8)--(5,8);
\draw[black, thick,x=1em,y=1em] (1,2)--(1,4);
\node[x=1em,y=1em] (1) at (3.5, 7.5) {$\rightarrow$};
\node[x=1em,y=1em] (3) at (3.5, 2.5) {z};
\node[x=1em,y=1em] (4) at (4.5, 3.5) {y};
\node[x=1em,y=1em] (5) at (8.5, 7.5) {x};
\end{scope}
\end{tikzpicture}
\qquad
(\avec^2,\svec^2)=
\begin{tikzpicture}[baseline=(current bounding box.center)]
\begin{scope}[yscale=-1,rotate=90]
\draw[step=1em, gray, very thin] (-0.001,0) grid (9em, 9em);
\fill[gray,x=1em,y=1em] (5,8)--(5,9)--(0,9)--(0,0)--(1,0)--(1,7)--(3,7)--(3,8)--(5,8);
\fill[gray,x=1em,y=1em] (3.01,7.99)--(3.99,7.99)--(3.99,7.01)--(3.01,7.01);
\fill[gray,x=1em,y=1em] (4.01,7.99)--(4.99,7.99)--(4.99,7.01)--(4.01,7.01);
%
%
\draw[black, thick,x=1em,y=1em] (3,2)--(4,2)--(4,3)--(5,3)--(5,4)--(6,4)--(6,5)--(7,5)--(7,6)--(8,6)--(8,7)--(9,7)--(9,8);
\draw[black,thick,x=1em,y=1em] (3,2)--(3,1)--(2,1)--(2,0);
\draw[black, thick,x=1em,y=1em] (3,7)--(3,8)--(5,8);
\draw[black, thick,x=1em,y=1em] (1,2)--(1,4);
\node[x=1em,y=1em] (2) at (4.5, 7.5) {$\rightarrow$};
\node[x=1em,y=1em] (3) at (3.5, 2.5) {z};
\node[x=1em,y=1em] (4) at (4.5, 3.5) {y};
\node[x=1em,y=1em] (5) at (8.5, 7.5) {x};
\end{scope}
\end{tikzpicture}
\end{align}
\end{example}

\subsection{Recursion in the complete graph case}

We end this section by recalling a recursion for 
LLT polynomials in the complete graph case.

\begin{proposition}[{\cite[Prop.5.8]{AlexanderssonPanova2016}}]\label{prop:completeGraphLLTRecursion}
Let $\LLT_{K_n}(\xvec;q)$ denote the LLT polynomial for the complete graph,
where the area sequence is $(0,1,2,\dotsc,n-1)$.
Then
\begin{align}\label{eq:lltCompleteGraphRecurrence}
 \LLT_{K_n}(\xvec;q) = \sum_{i=0}^{n-1} \LLT_{K_i}(\xvec;q) e_{n-i}(\xvec) 
 \prod_{k=i+1}^{n-1} \left[ q^{k} -1 \right]
 ,\; \LLT_{K_0}(\xvec;q) = 1.
\end{align}
\end{proposition}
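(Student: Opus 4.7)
My plan is to reduce the recursion to a generating-function identity arising from a product formula for $\LLT_{K_n}$. Set $P_m := \prod_{k=1}^{m-1}(q^k-1)$, so the desired identity reads $F_n = \sum_{i=0}^{n-1} F_i\, e_{n-i}\, P_n/P_{i+1}$ with $F_n := \LLT_{K_n}(\xvec;q)$. Dividing by $P_{n+1}$ and introducing $a_n := F_n/P_{n+1}$, the recursion becomes $(q^n-1)a_n = \sum_{i=0}^{n-1} a_i e_{n-i}$, which is precisely the $z^n$-coefficient of the functional equation
\[
  A(qz) = \Bigl(\prod_i(1+x_i z)\Bigr)\, A(z), \qquad A(z) := \sum_{n\geq 0} a_n z^n.
\]
So it suffices to establish this functional equation.

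To do so, I would first derive a closed form for $F_n$ directly from the combinatorial definition. A coloring $\kappa\colon[n]\to\setN$ of the complete graph contributes $\xvec^\kappa q^{\asc(\kappa)}$ where $\asc(\kappa) = |\{i<j : \kappa(i)<\kappa(j)\}|$. Grouping colorings by the partition $\lambda\vdash n$ that records the multiplicities of colors used, the inner sum over multiset rearrangements equals the $q$-multinomial $\binom{n}{\lambda_1,\dotsc,\lambda_\ell}_q$ by the classical Mahonian identity for the ascent statistic, yielding
\[
  \LLT_{K_n}(\xvec;q) = \sum_{\lambda\vdash n}\binom{n}{\lambda_1,\dotsc,\lambda_\ell}_q m_\lambda(\xvec).
\]
Using the elementary identity $\binom{n}{\lambda}_q = P_{n+1}/\prod_j P_{\lambda_j+1}$ and introducing the $q$-exponential $E_q(y) := \sum_{k\geq 0} y^k/P_{k+1}$, a direct expansion of $\prod_i E_q(x_i z)$ in the monomial basis matches $A(z)$ term by term. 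The functional equation then reduces to the one-variable identity $E_q(qy) = (1+y) E_q(y)$, which is verified by a short reindexing of the defining power series; extracting the $z^n$-coefficient from $A(qz) = H(z) A(z)$ with $H(z) = \sum_n e_n z^n$ and substituting back $a_n = F_n/P_{n+1}$ together with $P_{n+1}/(q^n - 1) = P_n$ recovers \eqref{eq:lltCompleteGraphRecurrence}.

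The main obstacle is the Mahonian step: showing $\sum_\sigma q^{\asc(\sigma)} = \binom{n}{\lambda}_q$ over rearrangements of a fixed multiset of type $\lambda$. This is classical, but for a self-contained derivation one can invoke the relation $\asc + \inv + \textup{eq} = \binom{n}{2}$ (where $\textup{eq}$ counts pairs of equal values, a constant for the multiset) together with the $q\mapsto 1/q$ symmetry of the $q$-multinomial to transfer the standard $\inv$-identity. An alternative, purely bijective route would iteratively apply the outer-corner recursion \eqref{eq:verticalStripToUnicellular} to pass from $\LLT_{K_{n-1}}$ to $\LLT_{K_n}$, but producing the full $\prod(q^k-1)$ factor this way appears substantially more intricate than the generating-function path above.
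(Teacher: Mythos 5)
Your argument is correct, and every step checks out: the closed form $\LLT_{K_n}(\xvec;q)=\sum_{\lambda\vdash n}\binom{n}{\lambda_1,\dotsc,\lambda_\ell}_q m_\lambda(\xvec)$ does follow from the coloring definition, since in the complete graph $\asc(\kappa)$ is exactly the number of pairs $i<j$ with $\kappa(i)<\kappa(j)$, and your complementation argument ($\asc+\inv+\mathrm{eq}=\binom{n}{2}$ together with the palindromicity of the $q$-multinomial) is precisely the right way to transfer MacMahon's $\inv$-identity to this pairwise statistic; the identity $\binom{n}{\lambda}_q=P_{n+1}/\prod_j P_{\lambda_j+1}$ and the functional equation $E_q(qy)=(1+y)E_q(y)$ (equivalent to $P_{k+1}=(q^k-1)P_k$) are both correct, and extracting the $z^n$-coefficient of $A(qz)=\prod_i(1+x_iz)\,A(z)$ indeed recovers \eqref{eq:lltCompleteGraphRecurrence}. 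Note that the present paper does not prove this proposition at all; it is quoted from \cite[Prop.~5.8]{AlexanderssonPanova2016}, so there is no in-text proof to compare against. Relative to that source, your route is a self-contained generating-function proof built on the monomial expansion of $\LLT_{K_n}$ (equivalently, on the plethystic formula $\LLT_{K_n}(\xvec;q)=(q;q)_n\,\completeH_n[\xvec/(1-q)]$, whose product generating function $\prod_i\prod_{j\geq 0}(1-x_iq^jz)^{-1}$ satisfies the same $z\mapsto qz$ functional equation), rather than a direct combinatorial decomposition of colorings; what it buys is brevity and a transparent reason for the factors $\prod_{k=i+1}^{n-1}(q^k-1)$, at the cost of leaving the recursion without a bijective interpretation on colorings. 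Your closing remark is also sound: iterating \eqref{eq:verticalStripToUnicellular} alone does not obviously produce the telescoping product, so the generating-function path is the reasonable choice here.
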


\begin{lemma}\label{lem:rectangularLLT}
If $\avec$ is rectangular and the non-edges form a $k\times (n-k)$-rectangle in the Dyck diagram,
then $\LLT_{\avec}(\xvec;q) = \LLT_{K_k}(\xvec;q)\LLT_{K_{n-k}}(\xvec;q)$.
\end{lemma}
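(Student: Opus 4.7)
The plan is to unfold the definition of $\LLT_\avec(\xvec;q)$ as a weighted sum over valid colorings (here with $\svec = \emptyset$) and exploit the fact that a rectangular $\avec$ produces a graph that is literally the disjoint union of two complete graphs on consecutive blocks of vertices. Concretely, I would first unpack the hypothesis: the conditions $a_i = i-1$ for $1\le i\le k$ and $a_j = j-k-1$ for $k+1\le j\le n$ force the edge set of $\Gamma_\avec$ (using \eqref{eq:cyclicUIGraph}) to be exactly
\[
\{(u,v) : 1\le u<v\le k\} \;\cup\; \{(u,v) : k+1\le u<v\le n\},
\]
with no edges between $\{1,\dotsc,k\}$ and $\{k+1,\dotsc,n\}$. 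The degenerate case $\avec=(0,1,\dotsc,n-1)$ is trivial ($k=n$), so I would assume $1\le k\le n-1$.

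Next I would factor the sum in \cref{def:LLTPoly}. Any coloring $\kappa:[n]\to\setN$ decomposes uniquely as a pair $(\kappa_1,\kappa_2)$ with $\kappa_1 = \kappa|_{\{1,\dotsc,k\}}$ and $\kappa_2 = \kappa|_{\{k+1,\dotsc,n\}}$. Since $\svec=\emptyset$ there are no validity constraints, and since no edge of $\Gamma_\avec$ crosses between the two blocks, every ascent is internal to one block. Hence
\[
\xvec^{\kappa} = \xvec^{\kappa_1}\xvec^{\kappa_2}
\qquad\text{and}\qquad
\asc(\kappa) = \asc(\kappa_1) + \asc(\kappa_2),
\]
so the sum in \eqref{eq:LLTdef} factors as a product of two independent sums, one over colorings of $K_k$ on the vertex set $\{1,\dotsc,k\}$ and one over colorings of $K_{n-k}$ on the vertex set $\{k+1,\dotsc,n\}$.

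The only point that requires a word of justification is that the second factor, a sum over colorings of the clique on $\{k+1,\dotsc,n\}$ with ascents counted for pairs $(u,v)$ with $k+1\le u<v\le n$ and $\kappa(u)<\kappa(v)$, equals $\LLT_{K_{n-k}}(\xvec;q)$. This is immediate from the relabeling $v\mapsto v-k$: it is an order-preserving bijection of vertex sets, so it preserves the set of ascending edges and the monomial weight, and maps the edge set onto the edge set of the standard $K_{n-k}$ with area sequence $(0,1,\dotsc,n-k-1)$.

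I do not anticipate a real obstacle: the lemma is essentially the observation that LLT polynomials factor over connected components of $\Gamma_\avec$, applied in the very clean rectangular case. The only thing to be careful about is that ``disjoint union'' must really be understood with respect to the induced vertex labelings, so that the ascent statistic splits additively; the relabeling argument in the previous paragraph makes this rigorous.
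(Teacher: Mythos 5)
Your proposal is correct and follows essentially the same route as the paper: the paper's proof simply observes that $\Gamma_\avec$ is a disjoint union of two complete graphs, so the coloring sum in \cref{def:LLTPoly} factors. Your write-up just makes the relabeling and the additivity of $\asc$ across components explicit, which is a fine (if more detailed) rendering of the same argument.
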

\begin{proof}
 The unit-interval graph $\Gamma_\avec$ is a disjoint union of two smaller complete graphs,
 so this now follows immediately from the definition of unicellular LLT polynomials.
\end{proof}

For the remaining of this section, it will be more convenient to use the notation in
\cite{Lee2018}, and index unicellular LLT polynomials of degree $n$ with partitions $\lambda$ that fit inside the 
staircase $(n-1,n-2,\dotsc,2,1,0)$. We fix $n$ and let the 
area sequence $\avec$ correspond to the partition $\lambda$ where $\lambda_i = n-i - a_{n+1-i}$.
Hence, $\lambda$ is exactly the shape of the (shaded) non-edges in the Dyck diagram.
By definition, $\lambda$ is abelian if it fits inside some $k \times (n-k)$-rectangle.

\begin{lemma}[{Follows from \cite[Thm. 3.4]{HuhNamYoo2018}}]\label{lem:rowNumberReduction}
Let $\lambda$ be abelian with $\ell\geq 2$ parts such that $\lambda_\ell<\lambda_{\ell-1}$.
Let
\begin{align*}
 \mu = (\lambda_1,\lambda_2,\dotsc,\lambda_{\ell-1})\quad \text{ and } \quad
 \nu = (\lambda_1,\lambda_2,\dotsc,\lambda_{\ell-1},\lambda_{\ell}+1).
\end{align*}
Then there are rational functions $c(q)$ and $d(q)$ such that $\LLT_\lambda(\xvec;q) = c(q)\LLT_\mu(\xvec;q) + d(q)\LLT_\nu(\xvec;q)$.
\end{lemma}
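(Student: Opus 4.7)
The plan is to interpolate between $\mu$ and $\nu$ via a chain of partitions differing only in the last row, and to iteratively apply Lee's three-term recursion \cref{prop:lee}. Let $n$ be such that $\lambda$ corresponds to an area sequence of length $n$, and for each $k = 0, 1, \dotsc, \lambda_\ell + 1$ define the partition $\sigma^{(k)} \coloneqq (\lambda_1, \dotsc, \lambda_{\ell-1}, k)$, with the convention that $\sigma^{(0)} = \mu$. Then $\sigma^{(\lambda_\ell)} = \lambda$ and $\sigma^{(\lambda_\ell + 1)} = \nu$. As noted in the discussion of abelian shapes, $\lambda$ being abelian is equivalent to $\ell + \lambda_1 \leq n$; this same inequality shows each $\sigma^{(k)}$ is abelian. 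Let $\avec^{(k)}$ denote its area sequence; only the entry at position $j \coloneqq n + 1 - \ell$ varies with $k$, namely $a^{(k)}_j = n - \ell - k$.

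Next, I would apply Lee's recursion to $\avec^{(k-1)}$ at the edge $(k, n+1-\ell)$ for each $k = 1, \dotsc, \lambda_\ell$, which calls for a careful admissibility verification. The starting vertex $i = j - a^{(k-1)}_j$ equals $k$; the requirement $a^{(k-1)}_j \geq 2$ reduces to $k \leq n - \ell - 1$, which follows from the chain $\lambda_\ell < \lambda_{\ell-1} \leq \lambda_1 \leq n - \ell$; the requirement $a^{(k-1)}_j \geq a^{(k-1)}_{j+1} + 1$ simplifies to $\lambda_{\ell-1} \geq k + 1$, which follows from $k + 1 \leq \lambda_\ell + 1 \leq \lambda_{\ell-1}$; and the condition $a_i + 1 = a_{i+1}$ is automatic by the abelian hypothesis, as already noted in the definition of admissibility.

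Setting $P_k \coloneqq \LLT_{\sigma^{(k)}}(\xvec; q)$, Lee's recursion then supplies the $\lambda_\ell$ three-term identities $P_{k-1} - P_k = q(P_k - P_{k+1})$. Rearranging to $P_{k+1} - P_k = q^{-1}(P_k - P_{k-1})$ and telescoping yields
\[
P_m = \bigl(1 - T_m(q)\bigr) P_0 + T_m(q) P_1 \quad \text{with} \quad T_m(q) \coloneqq \sum_{i=0}^{m-1} q^{-i},
\]
valid for $1 \leq m \leq \lambda_\ell + 1$. Taking $m = \lambda_\ell + 1$ and solving for $P_1$ in terms of $P_0 = \LLT_\mu$ and $P_{\lambda_\ell + 1} = \LLT_\nu$, then substituting into the formula at $m = \lambda_\ell$, gives the lemma with the explicit rational coefficients $c(q) = (T_{\lambda_\ell + 1} - T_{\lambda_\ell})/T_{\lambda_\ell + 1}$ and $d(q) = T_{\lambda_\ell}/T_{\lambda_\ell + 1}$.

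The main obstacle is precisely the admissibility step: one must confirm that both the abelian property and the strict inequality $\lambda_\ell < \lambda_{\ell-1}$ propagate uniformly across all $\lambda_\ell$ intermediate partitions $\sigma^{(k-1)}$, so that Lee's recursion is legitimately applicable at each one. Once that verification is in hand, the remaining telescoping and linear algebra are entirely routine.
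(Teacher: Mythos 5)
Your proposal is correct and follows essentially the same route as the paper, whose proof is precisely to apply \cref{prop:lee} repeatedly on row $\ell$ to interpolate between $\mu$ and $\nu$. Your write-up simply makes explicit what the paper leaves implicit: the admissibility checks for the edge $(k,n+1-\ell)$ in each intermediate abelian partition $\sigma^{(k-1)}$, and the telescoping that yields the coefficients $c(q)$ and $d(q)$.
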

\begin{proof}
Use \cref{prop:lee} repeatedly on row $\ell$ of $\mu$.
\end{proof}

\begin{example}
 To illustrate \cref{lem:rowNumberReduction}, we have the following three partitions:
 \[
  \lambda = \ydiagram{6,5,3}\quad \mu = \ydiagram{6,5,0}, \quad \nu = \ydiagram{6,5,4}
 \]
\end{example}

\begin{proposition}\label{prop:recToAbelian}
Every $\LLT_{\lambda}(\xvec;q)$ where $\lambda$ is abelian,
can be expressed recursively via Lee's recurrences,
as a linear combination of some $\LLT_{\mu^j}(\xvec;q)$ where the $\mu^j$ are rectangular.
\end{proposition}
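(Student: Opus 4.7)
The plan is to proceed by strong induction on $\ell(\lambda)$, the number of nonzero parts of $\lambda$. The base case $\ell(\lambda)=0$ is immediate, since $\lambda=\emptyset$ corresponds to the complete graph $K_n$, which is rectangular. For the inductive step I assume the result for every abelian partition with fewer than $\ell$ parts.

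The main engine is \cref{lem:rowNumberReduction}. If $\ell \geq 2$ and $\lambda_\ell < \lambda_{\ell-1}$, the lemma gives
\[
\LLT_\lambda(\xvec;q) = c(q)\,\LLT_\mu(\xvec;q) + d(q)\,\LLT_\nu(\xvec;q),
\]
with $\mu = (\lambda_1,\dotsc,\lambda_{\ell-1})$ of length $\ell-1$ (which reduces by the inductive hypothesis) and $\nu = (\lambda_1,\dotsc,\lambda_{\ell-1},\lambda_\ell+1)$ of length $\ell$. Iterating the lemma on $\nu$ strictly increases its last part while producing further $\LLT_\mu$ terms, so after finitely many steps one arrives at a terminal partition $\nu^\ast = (\lambda_1,\dotsc,\lambda_{\ell-1},\lambda_{\ell-1})$ whose last two rows coincide.

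To dispose of $\nu^\ast$, and of any abelian partition with an equal bottom block (in particular the all-equal case $\lambda = (m^\ell)$ with $m + \ell < n$ and the single-row case $\ell = 1$ with $\lambda_1 < n-1$), I would appeal to \cref{prop:lee} or its dual \cref{prop:dualLee}. At least one of these recurrences is always available at some partition $\lambda'$ that either equals $\nu^\ast$ or is obtained from $\nu^\ast$ by deleting a single box on its lower boundary; the resulting relation expresses $\LLT_{\nu^\ast}$ as a $\mathbb{Q}(q)$-linear combination of $\LLT$-polynomials indexed by partitions strictly larger than $\nu^\ast$ inside a common bounding rectangle $((n-\ell)^\ell)$. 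A secondary downward induction on the deficit $|((n-\ell)^\ell) \setminus \nu^\ast|$ then handles those larger partitions, since each of them either returns to the case $\lambda_\ell < \lambda_{\ell-1}$ (closing the loop via \cref{lem:rowNumberReduction}) or is strictly closer to the rectangle.

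The main obstacle is the bookkeeping in the equal-bottom-block case: one must verify that for every non-rectangular abelian $\lambda$ with a block of equal bottom rows, the primal or dual Lee admissibility conditions hold either at $\lambda$ itself or at a partition reached from $\lambda$ by removing a boundary box, and that the rational coefficient arising when one solves the resulting linear system for $\LLT_\lambda$ is nonzero in $\mathbb{Q}(q)$. Both points reduce to a case analysis of the admissibility conditions along the boundary of $\lambda$ inside its bounding rectangle, together with a $q$-degree count that precludes cancellation in the leading coefficient.
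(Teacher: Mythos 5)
Your reduction via \cref{lem:rowNumberReduction} to the case where the last two rows of the partition coincide is fine, but from that point on the argument has a genuine gap, and it is exactly at the step that carries the content of the proposition. You assert that for every non-rectangular abelian $\nu^\ast$ with an equal bottom block, a primal or dual Lee relation is ``always available'' either at $\nu^\ast$ or at a partition obtained by deleting one boundary box, and that the resulting relation expresses $\LLT_{\nu^\ast}(\xvec;q)$ in terms of strictly larger partitions inside the bounding rectangle. Neither claim is established, and the second is not even correct as stated: if the admissible edge lives at $\lambda'=\nu^\ast$ minus a box, the three-term relation necessarily involves $\LLT_{\lambda'}(\xvec;q)$ itself, a \emph{smaller} partition; and since $\lambda'$ has $\lambda'_\ell<\lambda'_{\ell-1}$, applying \cref{lem:rowNumberReduction} to it feeds $\nu^\ast$ back in, so you are forced into the linear system whose non-degeneracy you only gesture at (``a $q$-degree count that precludes cancellation''). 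Verifying admissibility in the equal-bottom-block case and ruling out this circularity/cancellation is precisely the bookkeeping you defer, so the proof is not complete.

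The paper avoids the linear-system issue altogether by a different case split on $\lambda$ with $\ell>1$ parts: if $\lambda_i\leq \ell-i$ for some $i$, one can \emph{remove} a horizontal or vertical domino and use \cref{prop:lee} (or \cref{prop:dualLee}) to write $\LLT_\lambda(\xvec;q)$ in terms of two strictly smaller partitions; if $\lambda_i>\ell-i$ for all $i$, then either $\lambda$ is rectangular, or one can \emph{add} a domino, or \cref{lem:rowNumberReduction} applies, in each case moving strictly toward the full $\ell\times(n-\ell)$ rectangle or strictly decreasing $\ell$. Since the first regime only ever reduces to itself with smaller $|\lambda|$ (or smaller $\ell$) and the second only to itself with larger $|\lambda|$ (or smaller $\ell$), there are no circular dependencies and no coefficients whose nonvanishing needs to be checked beyond the fixed coefficients $1$, $1+q$, $q$ in Lee's relation. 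If you want to salvage your route, you would need to carry out the admissibility case analysis for equal-bottom-block partitions and prove the determinant of your two-equation system is nonzero in $\mathbb{Q}(q)$; adopting the paper's dichotomy is the simpler repair.
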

\begin{proof}
Let $\lambda$ be abelian partition with exactly $\ell$ parts, so that it fits in a $\ell \times (n-\ell)$-rectangle.
We shall do a proof by induction over $\lambda$, and in particular its number of parts.

\begin{enumerate}
 \item 
\textbf{Case $\lambda=\emptyset$.}
This is rectangular by definition.

\item 
\textbf{Case $\lambda=(n-1)$.} This is rectangular.

\item 
\textbf{Case $\ell=1$.} Use \cref{lem:rowNumberReduction}
to reduce to Case (1) and Case (2).

\item 
\textbf{Case $\ell>1$ and $\lambda_i \leq \ell-i$ for some $i \in [\ell]$.}
The conditions imply that it is possible to 
remove a $2\times 1$ or a $1 \times 2$-domino from $\lambda$ and obtain a new partition.
Hence we can use Lee's recursion to express $\LLT_{\lambda}(\xvec;q)$ using polynomials indexed by two smaller partitions.
For example, this case applies in the following situation:
\begin{equation}
 \lambda = \ydiagram{4,2,2,1,1}\quad \longrightarrow \quad  \ydiagram{4,2,1,1,1} 
 \quad \text{ and } \quad  \ydiagram{4,1,1,1,1}
\end{equation}

\item 
\textbf{Case $\ell>1$ and $\lambda_i > \ell-i$ for all $i \in [\ell]$.}
Three things can happen here, and it is easy to see that this list is exhaustive.
\begin{itemize}
 \item $\lambda$ is rectangular and we are done.
 \item We can \emph{add} a $2\times 1$ or $1 \times 2$-domino to $\lambda$ without increasing $\ell$ and still obtain a partition.
 Similar to Case (4), we can therefore reduce to cases where $|\lambda|$ has \emph{increased} by $1$ and $2$.
 \item \cref{lem:rowNumberReduction} can be applied, thus reducing $\lambda$ to a case where $\ell$ has strictly been decreased,
 and a case where $\lambda$ has increased by one box.
\end{itemize}
\end{enumerate}
Notice that Case (4) reduces \emph{only} back to Case (4), or a case where $\ell$ is decreased,
and the same goes for Case (5). There are therefore no circular dependencies 
amongst these cases and the induction is valid.
\end{proof}

\section{Recursions for the conjectured formula}\label{sec:recPropConj}

In this section, we prove that $\LLTc(\xvec;q)$ also fulfills the recursion in
\cref{prop:verticalStripToUnicellular}.
We use similar bijective technique as in \cref{sec:recProp},
but \emph{diagrams now represent weighted sums over orientations} as in 
\cref{eq:conjFormula}. Note that we now also consider the shifted polynomial $\LLTc_{\avec,\svec}(\xvec;q+1)$.

Each diagram now represents a weighted sum over orientations, 
where the weight of a single orientation $\theta$ 
is $q^{\asc(\theta)}\elementaryE_{\hrvp(\theta)}$.
The class of diagrams we now consider is as follows.
\begin{itemize}
 \item The white boxes are determined by some area sequence.
 \item Every edge (box) is either white or shaded.
 \item Only white boxes contribute to the ascent statistic.
 \item A box (white or shaded) may contain an arrow, a $\rightarrow$ or $\downarrow$,
 imposing a restriction on the orientations being summed over.
 In particular, a \emph{white} box containing a $\rightarrow$ is thus a sum over orientations where 
 this particular edge must be an ascent.
\end{itemize}
Hence, each diagram represents a sum over $2^W$ orientations, where $W$ is the number of white boxes not containing any arrow.

\begin{example}
Suppose the following diagram illustrates the entire graph.
The diagram represents the weighted sum over all orientations of the non-specified 
edges $(x,y)$ and $(y,z)$. 
The edge $(x,z)$ is strict, and $(z,w)$ is forced to be ascending. 
Remember that each ascending edge contributes with a $q$-factor.
\[
 \ytableausetup{boxsize=1.0em}
\begin{ytableau}
*(lightgray)  & *(lightgray) & \to & *(yellow) w\\
*(lightgray) \to &  & *(yellow) z \\
\; &  *(yellow) y \\
*(yellow) x
\end{ytableau}
\]
There are four orientations in total,
\[
\begin{ytableau}
*(lightgray)  & *(lightgray) & \to & *(yellow) w\\
*(lightgray) \to & \downarrow & *(yellow) z \\
\downarrow &  *(yellow) y \\
*(yellow) x
\end{ytableau}
\quad 
\begin{ytableau}
*(lightgray)  & *(lightgray) & \to & *(yellow) w\\
*(lightgray) \to & \downarrow & *(yellow) z \\
\to &  *(yellow) y \\
*(yellow) x
\end{ytableau}
\quad 
\begin{ytableau}
*(lightgray)  & *(lightgray) & \to & *(yellow) w\\
*(lightgray) \to & \to & *(yellow) z \\
\downarrow &  *(yellow) y \\
*(yellow) x
\end{ytableau}
\quad 
\begin{ytableau}
*(lightgray)  & *(lightgray) & \to & *(yellow) w\\
*(lightgray) \to & \to & *(yellow) z \\
\to &  *(yellow) y \\
*(yellow) x
\end{ytableau}
\]
which according to \eqref{eq:conjFormula}
give the sum $q\elementaryE_{31} + q^2\elementaryE_{31} + q^2\elementaryE_{4} + q^3\elementaryE_{4}$.
\end{example}
In the diagrams below, only relevant vertices of the graphs are included.

\begin{proposition}\label{prop:verticalStripToUnicellularConj}
If $\Gamma_{\avec,\svec}$ is a vertical-strip graph, with $\epsilon$ being a non-strict outer corner,
then 
\begin{align}\label{eq:verticalStripToUnicellularConj}
 \LLTc_{\avec\cup\{\epsilon\},\svec}(\xvec;q+1)=  q\LLTc_{\avec, \svec \cup \{\epsilon\}}(\xvec;q+1)+ \LLTc_{\avec,\svec}(\xvec;q+1).
\end{align}
\end{proposition}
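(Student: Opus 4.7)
The plan is to mimic the bijective diagram proof of \cref{prop:verticalStripToUnicellular}, but now interpreting every diagram as a weighted sum over orientations (with weight $q^{\asc(\theta)}\elementaryE_{\hrvpp(\theta)}$) rather than over colorings. After the substitution $q+1 \mapsto q$, the identity becomes
\begin{equation*}
 \LLTc_{\avec\cup\{\epsilon\},\svec}(\xvec;q+1) = \LLTc_{\avec,\svec}(\xvec;q+1) + q\,\LLTc_{\avec,\svec\cup\{\epsilon\}}(\xvec;q+1),
\end{equation*}
and the natural idea is to split the left-hand sum according to how the new edge $\epsilon$ is oriented: either ascending (contributes $q$ to $\asc$) or descending (contributes $1$). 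In diagram form this is exactly the two-term expansion
\[
\ytableausetup{boxsize=1.0em}
\begin{ytableau}
\; &  *(yellow)\;\\
*(yellow) \;
\end{ytableau}
=
\begin{ytableau}
\downarrow &  *(yellow)\;\\
*(yellow) \;
\end{ytableau}
+
q\;
\begin{ytableau}
\rightarrow &  *(yellow)\;\\
*(yellow) \;
\end{ytableau}
\]
used for $\LLT$ in the earlier section.

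Next I would define the obvious forgetful bijection. An orientation $\theta \in O(\avec\cup\{\epsilon\},\svec)$ with $\epsilon$ descending corresponds to the orientation $\theta' \in O(\avec,\svec)$ obtained by deleting $\epsilon$, while an orientation $\theta$ with $\epsilon$ ascending corresponds to $\theta' \in O(\avec,\svec\cup\{\epsilon\})$ obtained by promoting $\epsilon$ to a strict edge. Under both maps $\asc(\theta') = \asc(\theta) - [\epsilon \text{ ascending in } \theta]$, which accounts exactly for the extra $q$ in the second case.

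The key step is then to check that $\hrvpp(\theta) = \hrvpp(\theta')$ in both branches. For this one inspects the subgraph of strict-plus-ascending edges used in the definition of $\hrv_\theta$: in the descending branch this subgraph is literally the same on both sides (a descending edge contributes nothing to $\hrv$-paths, and removing it from the underlying graph changes nothing), and in the ascending branch the subgraph is also the same, since an ascending non-strict edge and a strict edge play identical roles in the definition of $\hrv$. Consequently the reachability relation, the set partition $\hrvp(\theta)$, and its shape $\hrvpp(\theta)$ are invariant, giving a term-by-term matching of the two sides.

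The only thing that requires care — and is the main potential pitfall — is checking that being an outer corner is actually used correctly: one needs $\epsilon$ to be an edge of $\Gamma_{\avec\cup\{\epsilon\}}$ whose orientation can genuinely be chosen independently (so the sum over $O(\avec\cup\{\epsilon\},\svec)$ factors cleanly by the orientation of $\epsilon$), and one needs $\avec\cup\{\epsilon\}$ to still be a valid area sequence so that the left-hand side is well-defined. Both follow from $\epsilon$ being a non-strict outer corner by \cref{def:UIGraph}. With this verified, the bijection is weight-preserving in $q^{\asc(\cdot)}\elementaryE_{\hrvpp(\cdot)}$, and summing the two branches yields the identity.
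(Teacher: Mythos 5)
Your proposal is correct and follows essentially the same route as the paper: split the orientations of $\Gamma_{\avec\cup\{\epsilon\},\svec}$ according to whether $\epsilon$ is ascending or descending, map the ascending case to $O(\avec,\svec\cup\{\epsilon\})$ (the lost ascent compensated by the explicit factor $q$) and the descending case to $O(\avec,\svec)$, and observe that $\asc$ and $\hrvp(\cdot)$ are preserved because a strict edge and an ascending edge play the same role in the definition of $\hrv$, while a descending edge plays none. The only cosmetic slip is in your displayed diagram identity: under the paper's conventions a white box marked $\rightarrow$ already carries its $q$-factor, so the second term should not have an additional prefactor $q$ (or the box should be shaded, as in the paper); your verbal bookkeeping of $\asc(\theta')=\asc(\theta)-[\epsilon\text{ ascending}]$ is the correct statement.
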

\begin{proof}
In diagram form, this amounts to showing that orientations of the diagram in the left hand side can be put in $q$-weight-preserving 
bijection with the disjoint sets of orientations in the right hand side, 
while also preserving the $\hrvp(\cdot)$-statistic.
Thus we wish to prove that
\[
\ytableausetup{boxsize=1.0em}
\begin{ytableau}
\; &  *(yellow) y\\
*(yellow) x
\end{ytableau}
=
q\;
\begin{ytableau}
*(lightgray) \rightarrow & *(yellow) y  \\
*(yellow) x
\end{ytableau}
+
\begin{ytableau}
*(lightgray) &  *(yellow) y \\
*(yellow) x
\end{ytableau}.
\]
Consider an orientation in the left hand side. There are two cases to consider:
\begin{itemize}
 \item The edge $(x,y)$ is ascending. We map the orientation to an orientation of the first diagram in the right hand side,
 by preserving the orientation of all other edges.
 \item The edge $(x,y)$ is non-ascending. We map this to the second diagram, 
 by preserving the orientation of all other edges.
\end{itemize}
In both cases above, note that both the $q$-weight 
and $\hrvp(\cdot)$ is preserved under this map.
\end{proof}

\begin{corollary}\label{cor:unicellularImpliesVS}
If \cref{conj:main} holds in the unit-interval case, it holds in the vertical-strip case.
\end{corollary}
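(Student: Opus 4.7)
The plan is to induct on $|\svec|$. The base case $\svec = \emptyset$ is precisely the unicellular statement of \cref{conj:main}, which we are assuming. For the inductive step, I assume the identity $\LLT_{\avec', \svec'}(\xvec; q) = \LLTc_{\avec', \svec'}(\xvec; q)$ holds whenever $|\svec'| < |\svec|$, and then pick any strict edge $\epsilon \in \svec$. Since strict edges are outer corners of $\Gamma_\avec$, the edge $\epsilon$ is automatically a non-strict outer corner of the smaller vertical-strip diagram $\Gamma_{\avec, \svec \setminus \{\epsilon\}}$, so both \cref{prop:verticalStripToUnicellular,prop:verticalStripToUnicellularConj} apply with $\svec \setminus \{\epsilon\}$ in place of $\svec$.

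Rearranging each of those two recursions to isolate the term carrying the strict edge $\epsilon$ yields
\begin{align*}
q\,\LLT_{\avec, \svec}(\xvec;q+1)  &= \LLT_{\avec \cup \{\epsilon\}, \svec \setminus \{\epsilon\}}(\xvec;q+1) - \LLT_{\avec, \svec \setminus \{\epsilon\}}(\xvec;q+1), \\
q\,\LLTc_{\avec, \svec}(\xvec;q+1) &= \LLTc_{\avec \cup \{\epsilon\}, \svec \setminus \{\epsilon\}}(\xvec;q+1) - \LLTc_{\avec, \svec \setminus \{\epsilon\}}(\xvec;q+1).
\end{align*}
Every term on the right-hand sides uses the strict-edge set $\svec \setminus \{\epsilon\}$, which has cardinality $|\svec| - 1$, so by the inductive hypothesis the two right-hand sides are equal. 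Since both sides are polynomials in $q$, I can cancel the common factor $q$ to obtain $\LLT_{\avec, \svec}(\xvec; q+1) = \LLTc_{\avec, \svec}(\xvec; q+1)$, and hence the desired identity after the substitution $q \mapsto q-1$.

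There is no real obstacle here: the whole argument rests on the observation that \cref{prop:verticalStripToUnicellular,prop:verticalStripToUnicellularConj} record \emph{the same} linear recursion for $\LLT$ and $\LLTc$, so any identity that holds at the base of the recursion (the unicellular level) propagates verbatim to all vertical-strip diagrams. The only point to be careful about is confirming that the induction parameter genuinely decreases on the right-hand side --- which it does, since $|\svec \setminus \{\epsilon\}| = |\svec|-1$ --- and that adding $\epsilon$ to the area sequence $\avec$ still produces a valid unit-interval graph, which is exactly the condition built into the notion of outer corner.
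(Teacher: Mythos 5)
Your proof is correct and follows essentially the same route as the paper: the paper's own argument is exactly this induction, using \cref{prop:verticalStripToUnicellular} and \cref{prop:verticalStripToUnicellularConj} to peel off strict edges one at a time, merely stated more tersely. Your added care about $\epsilon$ remaining a non-strict outer corner after deletion from $\svec$, and about cancelling the factor $q$, fills in details the paper leaves implicit.
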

\begin{proof}
 We can use \cref{prop:verticalStripToUnicellularConj} and \cref{prop:verticalStripToUnicellular}
 to recursively remove all strict edges. Since both families satisfy the same recursion,
 we have that the unicellular case of \cref{conj:main} implies the vertical-strip case.
\end{proof}

\subsection{The complete graph recursion and line graphs}

Analogous to \cref{prop:completeGraphLLTRecursion}, we have a recursion for the
corresponding $\LLTc_{K_n}(\xvec;q)$, where we again consider the complete graph case.
Here, $\binom{[n]}{k}$ denotes the set of $k$-subsets of $\{1,\dotsc,n\}$.
\begin{lemma}\label{lem:completeGraphLLTcRecursion}
The polynomial $\LLTc_{K_n}(\xvec;q)$ satisfy $\LLTc_{K_0}(\xvec;q) \coloneqq 1$
and $\LLTc_{K_n}(\xvec;q+1)$ is equal to
\begin{align}\label{eq:lltCompleteGraphRecurrence2}
 \sum_{i=0}^{n-1} \LLTc_{K_i}(\xvec;q+1) \elementaryE_{n-i}(\xvec) 
 \left( \sum_{S\in \binom{[n-1]}{n-1-i}} \prod_{j=1}^{n-1-i} (q+1)^{s_j - j}[ (q+1)^j - 1 ] \right).
\end{align}
\end{lemma}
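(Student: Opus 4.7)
The plan is to expand $\LLTc_{K_n}(\xvec;q+1)=\sum_{\theta\in O(K_n)}q^{\asc(\theta)}\elementaryE_{\hrvpp(\theta)}(\xvec)$ by grouping orientations according to the $\hrvp(\theta)$-class $C$ containing vertex $n$. Writing $|C|=n-i$, $T:=C\setminus\{n\}\subseteq[n-1]$, and $R:=[n-1]\setminus T$, the block $C$ gives the factor $\elementaryE_{n-i}$, while the remaining classes come from the induced orientation $\theta|_R$ on the $K_i$ supported on $R$. The target recursion will emerge once this decomposition is made and the enumeration of the remaining edges is carried out.

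The first step is to verify that an orientation has $n$'s class equal to $T\cup\{n\}$ exactly when (i) every $t\in T$ admits an ascending path to $n$ inside $T\cup\{n\}$, (ii) every edge $(r,n)$ with $r\in R$ is oriented descending, (iii) every edge between $r\in R$ and $t\in T$ with $r<t$ is descending, and (iv) all remaining edges are arbitrary. The ``if'' direction follows from a transitivity argument: any ascending walk from $r\in R$ can only move to larger vertices, and the forced descending edges prevent it from ever entering $T\cup\{n\}$, even through intermediate $R$-vertices. The converse is immediate since an ascending edge from $R$ to $T\cup\{n\}$ would directly contradict $r\notin C$. The statistics then factor cleanly as $\elementaryE_{\hrvpp(\theta)}=\elementaryE_{n-i}\,\elementaryE_{\hrvpp(\theta|_R)}$ and $\asc(\theta)$ splits as the sum of ascents inside $R$, inside $T\cup\{n\}$, and on the free cross-edges.

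Enumerating: orientations of $R$ contribute $\LLTc_{K_i}(\xvec;q+1)$; each free cross-edge contributes $(1+q)=(q+1)$, so for $T=\{t_1<\dots<t_{n-1-i}\}$ the cross-edges incident to $t_j$ contribute $(q+1)^{i+j-t_j}$ since exactly $i+j-t_j$ elements of $R$ exceed $t_j$; and orientations of $T\cup\{n\}$ in which every vertex reaches the top contribute $W_m(q):=\prod_{j=1}^{m-1}[(q+1)^j-1]$ with $m=n-i$. The identity for $W_m$ is proved by induction on $m$: peeling off the smallest vertex $u_1$, the condition that $u_1$ reaches $u_m$ (given that $u_2,\dots,u_{m-1}$ already do inside their own $K_{m-1}$) reduces to $u_1$ having at least one ascending outgoing edge, contributing $(q+1)^{m-1}-1$, while the remainder gives $W_{m-1}(q)$.

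The final step is a reindexing: the reflection $T\mapsto S:=\{n-t:t\in T\}$ is a bijection on $\binom{[n-1]}{n-1-i}$, and setting $s_k:=n-t_{n-i-k}$ one checks $s_k-k=i+j-t_j$ with $k=n-i-j$, so $\prod_j(q+1)^{i+j-t_j}=\prod_k(q+1)^{s_k-k}$. Summing the fixed-$T$ contribution over $T$ (equivalently over $S$) and interleaving with the $j$-th factor of $W_{n-i}(q)$ yields exactly the right-hand side of \eqref{eq:lltCompleteGraphRecurrence2}. I expect the main obstacle to be the edge-by-edge case analysis supporting the structural claim---in particular ruling out ``sneaky'' ascending paths from $R$ to $T\cup\{n\}$ through several intermediate $R$-vertices---whereas the reflection bijection and the inductive computation of $W_m$ are routine.
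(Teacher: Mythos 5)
Your proposal is correct and follows essentially the same route as the paper's proof: decompose orientations of $K_n$ by the $\hrvp$-class containing vertex $n$, count the internal orientations of that class by the product $\prod_j[(q+1)^j-1]$, weight the free cross-edges by powers of $(q+1)$, and finish with the reflection $s\mapsto n-s$ on $\binom{[n-1]}{n-1-i}$. The only difference is one of detail: you spell out the forced-descending cross-edges from $R$ into $T\cup\{n\}$ and the fact that ascending paths from $R$ stay inside $R$, points the paper's argument passes over rather tersely.
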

\begin{proof}
We first give an argument for the recursion in \eqref{eq:lltCompleteGraphRecurrence2}.
Given an orientation $\theta$ of $K_i$, 
we can construct a new orientation $\theta'$ of $K_n$ by inserting a new part of size $n-i$
in the vertex partition where vertex $n$ is a member.
Choose an $i$-subset of $[n-1]$ and let $\theta$ define the orientation of the edges in $\theta'$
on these vertices. The remaining $n-i-1$ vertices will be in the new part ---
let us call this set of vertices $S = \{s_1,\dotsc,s_{n-i-1}\}$
where $n>s_1>s_2 > \dotsb > s_{n-i-1} \geq 1$.
Each element $s_j$ must have at least one ascending edge to either vertex $n$,
or to another member in $S$ larger than $s_j$, 
but all other choices of ascending edges are allowed.
It then follows that  that for such a subset $S$, there are
\[
 \prod_{j=1}^{n-1-i} (q+1)^{n-s_j-j}[ (q+1)^j - 1 ]
\]
$\asc(\cdot)$-weighted ways of choosing subsets of ascending edges in $\theta'$ so that all vertices in $S$
has $n$ as highest reachable vertex.
Hence,
\[
 \sum_{S\in \binom{[n-1]}{n-1-i}} \prod_{j=1}^{n-1-i} (q+1)^{n-s_j - j}[ (q+1)^j - 1 ]
\]
is the $\asc(\cdot)$-weighted count of the number of orientations of $K_n$,
where the part of the vertex-partition containing $n$ has exactly $n-i$ members.
Finally, by sending each $s_i$ to $n-s_i$, which is an involution on $\binom{[n-1]}{n-1-i}$,
we get the desired formula.
\end{proof}

We shall now prove that $\LLTc_{K_n}(\xvec;q) = \LLT_{K_n}(\xvec;q)$.
By using \cref{lem:completeGraphLLTcRecursion} and \cref{prop:completeGraphLLTRecursion},
this follows from the following lemma.
\begin{lemma}
For all $n$ and $1\leq i \leq n-1$, we have that
\[
 \prod_{k=i+1}^{n-1} \left[ q^{k} -1 \right] = 
 \sum_{S\in \binom{[n]}{n-i-1}} \prod_{j=1}^{n-1-i} q^{s_j - j}[ q^j - 1 ].
\]
\end{lemma}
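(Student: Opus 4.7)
The plan is to recognize that both sides reduce to a ratio of $q$-factorials, so the identity is essentially a rewriting of the Gaussian binomial coefficient. Let $m = n - 1 - i$, so that the left-hand side is the product $\prod_{k=n-m}^{n-1}[q^k-1]$ of $m$ consecutive factors, and the right-hand side sums over $m$-element subsets $S = \{s_1 < s_2 < \cdots < s_m\}$ of $[n-1]$. (The lemma as stated uses the convention with $s_j$ indexed in increasing order, matching the involution $s_j \mapsto n - s_j$ applied at the end of the proof of \cref{lem:completeGraphLLTcRecursion}.)

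First, I would pull out the $S$-independent factor: the product $\prod_{j=1}^{m}[q^j - 1]$ does not depend on $S$, so
\[
\text{RHS} \;=\; \left( \prod_{j=1}^{m}[q^j - 1] \right) \sum_{1 \leq s_1 < \cdots < s_m \leq n-1} q^{\sum_{j=1}^{m}(s_j - j)}.
\]
Next, I would reparametrize the remaining sum using $t_j \coloneqq s_j - j$. The strict inequalities $s_j < s_{j+1}$ translate into weak inequalities $t_j \leq t_{j+1}$, and the bounds become $0 \leq t_1 \leq t_2 \leq \cdots \leq t_m \leq n - 1 - m$. This is the standard combinatorial description of the Gaussian binomial coefficient, so
\[
\sum_{1 \leq s_1 < \cdots < s_m \leq n-1} q^{\sum_j (s_j - j)} \;=\; \binom{n-1}{m}_{\!q}.
\]

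The last step is to simplify and match. Writing $[k]_q = (q^k-1)/(q-1)$ and $[k]_q! = \prod_{j=1}^{k}[j]_q$, I would express
\[
\prod_{j=1}^{m}[q^j - 1] \;=\; (q-1)^m [m]_q!, \qquad \prod_{k=n-m}^{n-1}[q^k - 1] \;=\; (q-1)^m \frac{[n-1]_q!}{[n-1-m]_q!},
\]
and use the textbook identity $\binom{n-1}{m}_q = [n-1]_q! / \bigl( [m]_q! [n-1-m]_q! \bigr)$ to conclude that both sides equal $(q-1)^m [n-1]_q!/[n-1-m]_q!$.

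There is no serious obstacle here; the identity is a purely $q$-combinatorial one. The only mild care needed is keeping the indexing convention straight (increasing vs.\ decreasing $s_j$), which is why I would explicitly perform the reparametrization $t_j = s_j - j$ before invoking the $q$-binomial formula.
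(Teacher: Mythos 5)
Your argument is correct, and structurally it is the same reduction the paper makes: pull the $S$-independent factor $\prod_{j=1}^{m}[q^j-1]$ out of the right-hand side (the paper does this through the term-by-term rewrite $q^{s_j-j}[q^j-1]=q^{s_j}[1-q^{-j}]$), so that everything comes down to evaluating the subset-sum generating function $\sum_{S} q^{|S|}$ over $m$-element subsets. Where you genuinely differ is in how that evaluation is finished: the paper restates it as $\sum_{S\in\binom{[N]}{i}}q^{|S|}=\prod_{k=1}^{i}\tfrac{q^{N+1}-q^k}{q^k-1}$ and proves this by induction on $N$ and $i$, splitting according to whether the largest element lies in $S$ (a $q$-Pascal recursion), whereas you reparametrize by $t_j=s_j-j$ to land on partitions in an $m\times(n-1-m)$ box, quote the combinatorial description of $\binom{n-1}{m}_q$, and match both sides via the $q$-factorial formula. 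Both finishes are standard: yours avoids induction at the cost of invoking the closed form of the Gaussian binomial, while the paper's is self-contained but carries some index-substitution clutter. One point you should make explicit rather than leave implicit: you read the sum as being over $\binom{[n-1]}{n-1-i}$, and that is indeed the correct reading --- the printed $\binom{[n]}{n-i-1}$ is an off-by-one slip (already for $n=3$, $i=1$ it would give $q^3-1$ against the left-hand side $q^2-1$), and the version over $[n-1]$ is exactly what the proof of \cref{lem:completeGraphLLTcRecursion} requires --- so your convention is right, but it deserves a sentence of justification.
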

\begin{proof}
A small rewrite in each of the product indices gives
\[
 \prod_{k=1}^{(n-i-1)} \left[ q^{k+1} -1 \right] = 
 \sum_{S\in \binom{[n]}{(n-i-1)}} \prod_{j=1}^{(n-i-1)} q^{s_j}[1- q^{-j}].
\]
We may now substitute $i \coloneqq (n-i-1)$ and it suffices to prove that
\[
 \prod_{k=1}^{i} \left[ q^{n-k+1}-1 \right] = \sum_{S\in \binom{[n]}{i}} \prod_{j=1}^{i} q^{s_j}[ 1 - q^{-j}].
\]
This can be restated as
\[
 \prod_{k=1}^{i} \frac{ q^{n+1} - q^k }{ q^k - 1 } = \sum_{S\in \binom{[n]}{i}} q^{|S|}
\]
where $|S|$ denotes the sum of the entries in $S$. 
We can subdivide the right hand sum depending on if $n \in S$ or not,
\[
 \sum_{S\in \binom{[n]}{i}} q^{|S|} = \sum_{S\in \binom{[n-1]}{i}} q^{|S|} \quad  + \quad  
 q^n \sum_{S\in \binom{[n-1]}{i-1}} q^{|S|}.
\]
By induction over $n$ and $i$ it suffices to show that 
\[
 \prod_{k=1}^{i} \frac{ q^{n+1} - q^k }{ q^k - 1 } = 
 \prod_{k=1}^{i} \frac{ q^{n} - q^k }{ q^k - 1 } + 
 q^n \prod_{k=1}^{i-1} \frac{ q^{n} - q^k }{ q^k - 1 }
\]
and this is easy to verify.
\end{proof}

The case of line graphs follows immediately from \cite[Prop. 5.18]{AlexanderssonPanova2016}.
\begin{proposition}\label{prop:linegraph}
Let $\avec = (0,1,1,\dotsc,1)$ be a line graph. 
Then $\LLTc_{\avec}(\xvec;q) = \LLT_{\avec}(\xvec;q)$.
\end{proposition}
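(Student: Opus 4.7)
The strategy is to compute the right-hand side $\LLTc_\avec(\xvec;q+1)$ directly from the orientation sum in \eqref{eq:conjFormula} and match it against the closed-form expansion of $\LLT_\avec(\xvec;q+1)$ supplied by \cite[Prop.~5.18]{AlexanderssonPanova2016}, which handles the left-hand side. Since the cited result already gives an explicit $\elementaryE$-expansion in the line-graph case, the entire task reduces to evaluating the orientation sum and comparing.

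For that sum, observe that $\Gamma_\avec$ is simply the path on $n$ vertices with edges $(1,2),(2,3),\dotsc,(n-1,n)$, and since $\svec=\emptyset$, each $\theta \in O(\avec)$ picks a direction for every edge $(i,i+1)$ independently. An edge is ascending iff it is oriented $i \to i+1$, and because the graph is a path, the highest reachable vertex $\hrv_\theta(u)$ is the right endpoint of the maximal run of consecutive ascending edges starting at $u$. Hence $\hrvp(\theta)$ partitions $[n]$ into contiguous blocks, one for each maximal such run, and if $\theta$ has exactly $k$ ascending edges then $\hrvp(\theta)$ has $n-k$ blocks whose sizes form a composition of $n$.

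Next, I would group orientations by $\lambda = \hrvpp(\theta)$. For a fixed partition $\lambda \vdash n$ with $\ell=\ell(\lambda)$ parts and multiplicities $m_i(\lambda)$, the correspondence between orientations and ordered block-size compositions of $n$ with $\ell$ parts shows that the number of orientations with $\hrvpp(\theta)=\lambda$ is the multinomial $\binom{\ell}{m_1(\lambda),m_2(\lambda),\dotsc}$, each contributing weight $q^{n-\ell}\elementaryE_\lambda(\xvec)$. Collecting the contributions yields
\[
\LLTc_\avec(\xvec;q+1) = \sum_{\lambda \vdash n} \binom{\ell(\lambda)}{m_1(\lambda),m_2(\lambda),\dotsc}\, q^{n-\ell(\lambda)}\,\elementaryE_\lambda(\xvec).
\]

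The proof is then finished by checking that the expression above equals the formula for $\LLT_\avec(\xvec;q+1)$ given in \cite[Prop.~5.18]{AlexanderssonPanova2016}. I do not anticipate a real obstacle here; the argument is essentially a careful enumeration, and the only subtlety is a superficial translation between the partition-plus-multinomial form derived above and whatever indexing (compositions of $n$, subsets of edges, or partitions) is used in the cited reference. Should one prefer to avoid invoking that proposition, an alternative is an induction on $n$ via the outer-corner recursion: both $\LLT$ and $\LLTc$ satisfy the same three-term identity by \cref{prop:verticalStripToUnicellular,prop:verticalStripToUnicellularConj}, so agreement in the base case $n=1$ propagates, provided one checks agreement along the side-step that extends the path by one vertex.
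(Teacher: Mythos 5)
Your proposal is correct and matches the paper's approach: the paper proves this proposition simply by citing \cite[Prop.~5.18]{AlexanderssonPanova2016}, which gives the explicit $\elementaryE$-expansion of $\LLT_{\avec}(\xvec;q+1)$ for the path, and your evaluation of the orientation sum (maximal ascending runs give contiguous blocks, one orientation per composition, weight $q^{n-\ell}$) just makes explicit the comparison the paper leaves as ``immediate.'' No gap here; the alternative recursion route you sketch is unnecessary.
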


\subsection{On Lee's recursion for orientations}

We would also like to prove that the $\LLTc(\xvec;q)$ fulfill Lee's recursions.
However, this is a surprisingly challenging and we are unable to show this at the present time.
A proof that Lee's recursions hold for $\LLTc(\xvec;q)$ would 
imply that $\LLT_\avec(\xvec;q) = \LLTc_\avec(\xvec;q)$ 
at least for all abelian area sequences $\avec$.
Computer experiment with $n\leq 7$ confirms that the polynomials $\LLTc_\avec(\xvec;q)$
indeed do satisfy these recurrences.
\medskip 

The class of melting lollipop graphs
can be constructed recursively from the complete graphs and the line graphs 
by just using the recursion in \cref{prop:lee}.
This is in fact done in \cite{HuhNamYoo2018}, so we simply sketch a proof of this statement.
Recall that a melting lollipop graph $\avec$ is given by
 \[
  a_i = \begin{cases}
	i-1 \text{ for } i=1,\dotsc,m-1 \\
	m-1-k \text{ for } i=m \\
	1\text{ for } i=m+1,\dotsc,m+n
\end{cases}
 \]
for some $m,n\geq 1$ and $0\leq k \leq m-1$.
 Melting lollipop graphs for various parameters are shown below.
 \[
\ytableausetup{boxsize=0.4em}
A=
\substack{
\begin{ytableau}
 *(lightgray)  &*(lightgray)  &*(lightgray)  &*(lightgray)  &*(lightgray)  & *(lightgray)  & *(lightgray)  & *(lightgray)  & \;  & *(yellow)  \\
 *(lightgray)  &*(lightgray)  &*(lightgray)  &*(lightgray)  & *(lightgray)  & *(lightgray)  & *(lightgray)  & \;  & *(yellow)  \\
 *(lightgray)  &*(lightgray)  &*(lightgray)  & *(lightgray)  & *(lightgray)  & *(lightgray)  & \;  & *(yellow)  \\
\;  & \;  & \; & \;   & \; & \;  & *(yellow)  \\
\; & \; & \; & \; & \;  & *(yellow)  \\
\; & \; & \; & \;  & *(yellow)  \\
\; & \; & \;  & *(yellow)  \\
\; & \;  & *(yellow)  \\
\; &  *(yellow)  \\
*(yellow) 
\end{ytableau}
\\
m=7,k=0,n=4 \\
m'=8,k'=6,n'=3 \\
}
\hspace{-2mm}
B=
\substack{
\begin{ytableau}
 *(lightgray)  &*(lightgray)  &*(lightgray)  &*(lightgray)  &*(lightgray)  & *(lightgray)  & *(lightgray)  & *(lightgray)  & \;  & *(yellow)  \\
 *(lightgray)  &*(lightgray)  &*(lightgray)  &*(lightgray)  & *(lightgray)  & *(lightgray)  & *(lightgray)  & \;  & *(yellow)  \\
 *(lightgray)  &*(lightgray)  &*(lightgray)  & *(lightgray)  & *(lightgray)  & *(lightgray)  & \;  & *(yellow)  \\
*(lightgray)  & \;  & \; & \;   & \; & \;  & *(yellow)  \\
\; & \; & \; & \; & \;  & *(yellow)  \\
\; & \; & \; & \;  & *(yellow)  \\
\; & \; & \;  & *(yellow)  \\
\; & \;  & *(yellow)  \\
\; &  *(yellow)  \\
*(yellow) 
\end{ytableau}
\\
m=7,k=1,n=3 \\
}
\hspace{-2mm}
C=
\substack{
 \begin{ytableau}
 *(lightgray)  &*(lightgray)  &*(lightgray)  &*(lightgray)  &*(lightgray)  & *(lightgray)  & *(lightgray)  & *(lightgray)  & \;  & *(yellow)  \\
 *(lightgray)  &*(lightgray)  &*(lightgray)  &*(lightgray)  & *(lightgray)  & *(lightgray)  & *(lightgray)  & \;  & *(yellow)  \\
 *(lightgray)  &*(lightgray)  &*(lightgray)  & *(lightgray)  & *(lightgray)  & *(lightgray)  & \;  & *(yellow)  \\
*(lightgray)  &*(lightgray)  & \; & \;  & \; & \;  & *(yellow)  \\
\; & \; & \; & \; & \;  & *(yellow)  \\
\; & \; & \; & \;  & *(yellow)  \\
\; & \; & \;  & *(yellow)  \\
\; & \;  & *(yellow)  \\
\; &  *(yellow)  \\
*(yellow) 
\end{ytableau}
\\
m=7,k=2,n=3
}
\hspace{-2mm}
D=
\substack{
 \begin{ytableau}
 *(lightgray)  &*(lightgray)  &*(lightgray)  &*(lightgray)  &*(lightgray)  & *(lightgray)  & *(lightgray)  & *(lightgray)  & \;  & *(yellow)  \\
 *(lightgray)  &*(lightgray)  &*(lightgray)  &*(lightgray)  & *(lightgray)  & *(lightgray)  & *(lightgray)  & \;  & *(yellow)  \\
 *(lightgray)  &*(lightgray)  &*(lightgray)  & *(lightgray)  & *(lightgray)  & *(lightgray)  & \;  & *(yellow)  \\
*(lightgray)  &*(lightgray)  & *(lightgray)  & \;   & \; & \;  & *(yellow)  \\
\; & \; & \; & \; & \;  & *(yellow)  \\
\; & \; & \; & \;  & *(yellow)  \\
\; & \; & \;  & *(yellow)  \\
\; & \;  & *(yellow)  \\
\; &  *(yellow)  \\
*(yellow) 
\end{ytableau}
\\
m=7,k=3,n=3
}
\hspace{-2mm}
E=
\substack{
\begin{ytableau}
 *(lightgray)  &*(lightgray)  &*(lightgray)  &*(lightgray)  &*(lightgray)  & *(lightgray)  & *(lightgray)  & *(lightgray)  & \;  & *(yellow)  \\
 *(lightgray)  &*(lightgray)  &*(lightgray)  &*(lightgray)  & *(lightgray)  & *(lightgray)  & *(lightgray)  & \;  & *(yellow)  \\
 *(lightgray)  &*(lightgray)  &*(lightgray)  & *(lightgray)  & *(lightgray)  & *(lightgray)  & \;  & *(yellow)  \\
*(lightgray)  &*(lightgray)  & *(lightgray)  & *(lightgray)   & *(lightgray) & *(lightgray)  & *(yellow)  \\
\; & \; & \; & \; & \;  & *(yellow)  \\
\; & \; & \; & \;  & *(yellow)  \\
\; & \; & \;  & *(yellow)  \\
\; & \;  & *(yellow)  \\
\; &  *(yellow)  \\
*(yellow) 
\end{ytableau}
\\
m=7,k=6,n=3
}
\]
We can use the recursion in \cref{prop:lee} repeatedly to express LLT polynomials 
by adding one and removing one shaded box in row $m$.
For example, $C$ can be expressed in terms of $B$ and $D$.
Similarly, $B$ can be expressed in terms of $A$ and $C$,
and we get a system of linear equations expressing $B$, $C$ and $D$
in terms of only $A$ and $E$.

When $k=m-1$ (as for $E$ above) the graph $\Gamma_\avec$ is a disjoint union 
of a complete graph and a line graph,
which is a base case. Furthermore, when $k=0$, (as for $A$ above) we obtain
a melting lollipop graph with the new parameters $m'=m+1$, $k'=m-2$ and $n'=n-1$,
which are dealt with by induction over $n$.

\section{The Hall--Littlewood case}\label{sec:hallLittlwood}

In \cite{HaglundHaimanLoehr2005}, the modified Macdonald polynomials $\macdonaldH_\lambda(\xvec;q,t)$ 
are expressed as a positive sum of certain LLT polynomials.
The modified Macdonald polynomials specialize to modified Hall--Littlewood polynomials at $q=0$,
which in turn are closely related to the transformed Hall--Littlewood polynomials.

\begin{definition}[See \cite{DesarmenienLeclercThibon1994,TudoseZabrocki2003} for a background]\label{def:hallLittlewoodH}
Let $\lambda$ be a partition.
The \defin{transformed Hall--Littlewood polynomials} are defined as
\[
 \hallLittlewoodT_\mu(\xvec;q) = \sum_{\lambda} K_{\lambda\mu}(q)\schurS_\lambda(x)
\]
where $K_{\lambda\mu}(q)$ are the Kostka--Foulkes polynomials.
\end{definition}
The $\hallLittlewoodT_\lambda$ are sometimes denoted $Q'_\lambda$ and is the adjoint basis 
to the Hall--Littlewood $P$ polynomials for the standard Hall scalar product,
see \cite{DesarmenienLeclercThibon1994}.
A more convenient definition of the transformed Hall--Littlewood polynomials is the following.
For $\lambda \vdash n$ we have
\begin{equation}\label{eq:hallLittlewoodHDef}
 \hallLittlewoodT_\lambda(\xvec;q) = \prod_{1\leq i<j \leq n} \frac{1 - R_{ij}}{1 - qR_{ij}} \completeH_{\lambda}(\xvec)
\end{equation}
where $R_{ij}$ are \defin{raising operators} acting on the 
partitions (or compositions) indexing the complete homogeneous symmetric functions
as
\[
 R_{ij} \completeH_{(\lambda_1,\dotsc,\lambda_n)}(\xvec) = 
 \completeH_{(\lambda_1,\dotsc,\lambda_i +1, \dotsc, \lambda_j-1,\dotsc,\lambda_n)}(\xvec).
\]
Note that $\schurS_\lambda(\xvec) = \hallLittlewoodT_\lambda(\xvec;0)$,
and \eqref{eq:hallLittlewoodHDef} gives 
$
 \schurS_\lambda(\xvec) = \prod_{i<j} (1 - R_{ij})\completeH_{\lambda}(\xvec)
$
which is just the Jacobi--Trudi identity for Schur functions in disguise.
Furthermore, note that \eqref{eq:hallLittlewoodHDef} immediately implies that
\begin{equation}\label{eq:HLdominance}
\hallLittlewoodT_\lambda(\xvec;q) =  
\completeH_{\lambda}(\xvec) + \sum_{\mu \vartriangleright \lambda} c_\mu(q) \completeH_{\mu}(\xvec), \qquad c_\mu(q) \in \setZ[q]
\end{equation}
where $\vartriangleright$ denotes the dominance order, since the 
raising operators $R_{ij}$ can only make partitions larger in dominance order.

\medskip 

We now connect the transformed Hall--Littlewood polynomials 
with certain vertical strip LLT polynomials.
\begin{definition}\label{def:HLDiagram}
Given a partition $\mu \vdash n$, 
let $s_i$ be defined as $s_i \coloneqq \mu_1+\dotsb+\mu_i$, with $s_0 \coloneqq 1$.
From $\mu$, we construct a vertical strip diagram $\Gamma_{\mu}$ on $n$
vertices with the following edges: 
\begin{enumerate}[(a)]
 \item for each $j=1,\dotsc,\length(\mu)$, 
 let the vertices
 $\{s_{j-1},\dotsc,s_{j}\}$ 
constitute a complete subgraph of $\Gamma_{\mu}$,
 \item for each $j=2,\dotsc,\length(\mu)$, we also have the $\binom{\mu_j}{2}$ edges
 \[
  \{ s_{j-1}-i \to s_{j-1}+k+1 \text{ whenever } 0\leq i,k \text{ and } i + k \leq \mu_j-1\}.
 \]
\end{enumerate}
Thus, for each $j\geq 2$ in item (b),
we obtain a triangular shape of boxes with edges, as marked in \eqref{eq:smallTri}, where $\mu_j=5$.
\begin{equation}\label{eq:smallTri}
\ytableausetup{boxsize=1.4em}
\begin{ytableau} 
 *(lightgray) & *(lightgray) &*(lightgray) &*(lightgray) &*(lightgray) & \; &\;&\; &\; & *(yellow) s_j \\
 *(lightgray)  &*(lightgray)  & *(lightgray)  & *(lightgray)  & \rightarrow&  & & & *(yellow)  \\
 *(lightgray)  &*(lightgray)  & *(lightgray)  &  \rightarrow & \rightarrow &  && *(yellow) \\
 *(lightgray)  &*(lightgray)  & \rightarrow &  \rightarrow & \rightarrow &   & *(yellow) \\
 *(lightgray)  &\rightarrow & \rightarrow &  \rightarrow & \rightarrow & *(yellow)\\
 \; & \; & \; &  \; & *(yellow) \scriptstyle{s_{j-1}} & \none\\
\end{ytableau}
\end{equation}

Furthermore, all outer corners are taken as strict edges, see \cref{ex:HLverticalStrips} below.
As before, let $O(\Gamma_\mu)$ denote the set of orientations of the edges of $\Gamma_\mu$.
\end{definition}

\begin{proposition}
Let $\mu$ be a partition and let $\Gamma_\mu$ be the vertical strip diagram
constructed from $\mu$ and let $\LLT_{\mu}(\xvec;q)$ be the corresponding LLT polynomial. 
Then 
\begin{equation}\label{eq:relationWithHL}
 \omega \LLT_{\mu}(\xvec;q)  = q^{\sum_{i\geq 2} \binom{\mu_i}{2} } \hallLittlewoodT_{\mu'}(\xvec;q).
\end{equation}
\end{proposition}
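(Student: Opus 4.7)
The plan is to identify $\LLT_\mu(\xvec;q)$ with a classical $k$-tuple-of-shapes LLT polynomial and then invoke a known relation between column-indexed LLT polynomials and Hall--Littlewood polynomials.

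First I would show that the vertical-strip Dyck diagram $\Gamma_\mu$ described in \cref{def:HLDiagram} corresponds, under the translation spelled out in \cite{AlexanderssonPanova2016,CarlssonMellit2017,HaglundHaimanLoehr2005}, to the classical LLT polynomial indexed by the tuple of single columns $\vec\nu = ((1^{\mu_1}),\dotsc,(1^{\mu_{\length(\mu)}}))$ placed with appropriate horizontal offsets. The $\binom{\mu_j}{2}$ non-strict edges inside each clique-block of $\Gamma_\mu$ should come from within-column attacking pairs (which in the HHL column model carry no strict-inequality requirement but do contribute to inversions), while the triangular strict-edge regions pictured in \eqref{eq:smallTri} should encode exactly the HHL attacking conditions between two consecutive columns. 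This matching is the combinatorial heart of the statement; the offsets of the columns should be chosen precisely so that consecutive columns are offset by one row, producing the triangular strict region.

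Next I would apply $\omega$. Under $\omega$ the LLT polynomial indexed by a tuple of columns transforms (up to an explicit monomial in $q$ coming from \eqref{eq:omegaTransposed} together with the LLT transposition identity) to the LLT polynomial indexed by the transposed tuple of single rows of lengths $\mu_1,\dotsc,\mu_{\length(\mu)}$. This horizontal-strip LLT polynomial is, by a classical identity going back to Lascoux--Leclerc--Thibon (and recoverable from the raising-operator description \eqref{eq:hallLittlewoodHDef} via \cite{DesarmenienLeclercThibon1994,TudoseZabrocki2003}), equal to $\hallLittlewoodT_{\mu'}(\xvec;q)$ up to another power of $q$. Combining the two shifts and comparing leading $\completeH_{\mu'}$-coefficients using \eqref{eq:HLdominance} then pins down the total power of $q$ to be exactly $\sum_{i\geq 2}\binom{\mu_i}{2}$.

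The main obstacle is the bookkeeping in the first step: cleanly aligning the Dyck-diagram description of $\Gamma_\mu$ (in particular the interaction between interior clique edges and the triangular strict regions) with the HHL attacking-cell conditions for the column tuple $\vec\nu$, and then carefully tracking the accumulated power of $q$ across the $\omega$ operation and the column-to-Hall--Littlewood identity. An alternative route that sidesteps the HHL identification would be to verify directly that $q^{-\sum_{i\geq 2}\binom{\mu_i}{2}}\omega\LLT_\mu(\xvec;q)$ satisfies the raising-operator formula \eqref{eq:hallLittlewoodHDef}: expand the product $\prod_{i<j}(1-R_{ij})/(1-qR_{ij})$ as a formal series in the $R_{ij}$, and match the resulting coefficients of $\completeH_\nu$ against those extracted from the combinatorial definition of $\LLT_\mu$ via a sign-reversing involution on proper colorings of $\Gamma_\mu$.
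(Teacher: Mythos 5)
Your outline is essentially sound, but it takes a different route than the paper, whose argument runs through the modified Macdonald polynomial: the paper observes that $[t^{n(\lambda)}]\macdonaldH_{\lambda}(\xvec;q,t)=\omega\hallLittlewoodT_{\lambda'}(\xvec;q)$ (citing \cite[A.59]{qtCatalanBook}), notes that in the Haglund--Haiman--Loehr expansion of $\macdonaldH_{\lambda}$ the top $t$-coefficient is a single vertical-strip LLT polynomial times $q^{-A}$ with $A$ the sum of arm lengths, and then matches this with the diagram of \cref{def:HLDiagram} by unraveling \cite[A.14]{qtCatalanBook}; the arm-length count is what produces the exponent $\sum_{i\geq 2}\binom{\mu_i}{2}$ in \eqref{eq:relationWithHL}. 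You instead identify $\Gamma_\mu$ directly with the LLT polynomial of the column tuple $((1^{\mu_1}),\dotsc,(1^{\mu_\ell}))$ and invoke the classical column/row-tuple LLT versus Hall--Littlewood identity; this is legitimate and is in fact the shortcut the paper's footnote attributes to the referee (it follows directly from \cite[Thm.~6.8]{qtCatalanBook}), and your idea of fixing the overall monomial by comparing a single coefficient against \eqref{eq:HLdominance} is a clean way to avoid tracking powers of $q$ through the intermediate steps. Two cautions: first, the $\omega$ step via \eqref{eq:omegaTransposed} introduces a substitution $q\mapsto 1/q$, not merely a monomial factor, so you must also use the conversion between modified and transformed Hall--Littlewood polynomials ($\macdonaldH$ versus $\hallLittlewoodT$) for the inversions to cancel before the coefficient comparison makes sense; second, the entire weight of the argument rests on your first step --- checking that the clique blocks, the triangular regions as in \eqref{eq:smallTri}, and the strict outer corners of $\Gamma_\mu$ reproduce exactly the attacking and strictness data of the offset column tuple --- and this bookkeeping, which you flag but do not carry out, is precisely the ``unraveling of definitions'' that the paper itself only sketches (leaning on the correspondence with tuples of columns established in \cite{AlexanderssonPanova2016}). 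So your proposal is at the same level of completeness as the paper's brief proof sketch, just routed through the column-tuple identity rather than through the Macdonald specialization.
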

\begin{proof}[Brief proof sketch]
We use \cite[A.59]{qtCatalanBook} which states that for any partition $\lambda$,
the coefficient of $t^{n(\lambda)}$ in the modified Macdonald polynomial $\macdonaldH_{\lambda}(\xvec;q,t)$
is almost a transformed Hall--Littlewood polynomial:
\[
  [t^{n(\lambda)}]\macdonaldH_{\lambda}(\xvec;q,t) = \omega \hallLittlewoodT_{\lambda'}(\xvec;q).
\]
The $\macdonaldH_{\lambda}(\xvec;q,t)$ is a sum over certain LLT polynomials
and in particular, the coefficient of $t^{n(\lambda)}$
is a single vertical-strip LLT polynomial, multiplied with $q^{-A}$, where 
$A$ is the sum of arm lengths in the diagram $\lambda$.
Unraveling the definitions in \cite[A.14]{qtCatalanBook}, 
we arrive at the identity\footnote{It was pointed out by the referee that \eqref{eq:relationWithHL} also follows 
directly from \cite[Thm. 6.8]{qtCatalanBook}.}
in \eqref{eq:relationWithHL}.
\end{proof}

\begin{example}\label{ex:HLverticalStrips}
The Hall--Littlewood polynomial $\hallLittlewoodT_{3321}(\xvec;q)$
is related to the vertical strip diagram $\Gamma_{432}$ in \eqref{eq:relationWithHL}.

\begin{align}
\ytableausetup{boxsize=1.0em}
\Gamma_{432} =
\begin{ytableau} 
*(lightgray)   & *(lightgray)   & *(lightgray) & *(lightgray) & *(lightgray)  &   *(lightgray)   & *(lightgray) \rightarrow  &  & *(yellow)  9 \\
*(lightgray)   & *(lightgray)   & *(lightgray) & *(lightgray) & *(lightgray)  & *(lightgray) \rightarrow & \cdot & *(yellow) 8  \\
*(lightgray)   & *(lightgray) & *(lightgray) & *(lightgray) \rightarrow  &    &  & *(yellow)  7 \\
*(lightgray) & *(lightgray) & *(lightgray) \rightarrow  &  \cdot  &   & *(yellow)  6 \\
*(lightgray)  &  *(lightgray) \rightarrow  & \cdot  & \cdot & *(yellow)  5 \\
  &   &   & *(yellow)  4 \\
  &   & *(yellow)  3 \\
  & *(yellow) 2 \\
*(yellow)  1 \\
\end{ytableau}
\end{align}
The edges marked with a dot are the edges in item (b). 
There are $\sum_{i\geq 2} \binom{\mu_i}{2}$ such edges. 
Notice that the vertex partition of this orientation is $\{974,863,52,1\}$.
Furthermore, it is fairly straightforward to see 
that for \emph{any} orientation $\theta$ of $\Gamma_{\mu}$, 
we must have that the partition $\hrvpp(\theta)$ dominates $\mu'$.
\end{example}

We can now easily give some strong support for \cref{conj:main}.
\begin{corollary}
For any partition $\mu$,  the vertical-strip LLT polynomial $\LLT_{\mu}(\xvec;q+1)$
is $\elementaryE$-positive.
\end{corollary}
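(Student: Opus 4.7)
The plan is to combine the identity \eqref{eq:relationWithHL} from the preceding proposition with the $\completeH$-positivity of the shifted transformed Hall--Littlewood polynomials, which is the promised main theorem of this section. The deduction is purely formal once that input is in hand, so the content of the corollary really lies in the Hall--Littlewood result.

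First, I would substitute $q \mapsto q+1$ in \eqref{eq:relationWithHL} to obtain
\[
\omega \LLT_\mu(\xvec;q+1) = (q+1)^{A}\, \hallLittlewoodT_{\mu'}(\xvec;q+1),
\qquad A \coloneqq \sum_{i\geq 2}\binom{\mu_i}{2}.
\]
Since $\omega$ is an involution and is $\setZ[q]$-linear, applying it to both sides yields
\[
\LLT_\mu(\xvec;q+1) = (q+1)^{A}\, \omega \hallLittlewoodT_{\mu'}(\xvec;q+1).
\]

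Next, I would invoke the $\completeH$-positivity of $\hallLittlewoodT_\lambda(\xvec;q+1)$ (the main theorem of \cref{sec:hallLittlwood}, announced in the introduction): write
\[
\hallLittlewoodT_{\mu'}(\xvec;q+1) = \sum_{\lambda} c_\lambda(q)\, \completeH_\lambda(\xvec), \qquad c_\lambda(q) \in \setN[q].
\]
Since $\omega \completeH_\lambda = \elementaryE_\lambda$, applying $\omega$ term by term produces the $\elementaryE$-positive expansion
$\omega \hallLittlewoodT_{\mu'}(\xvec;q+1) = \sum_{\lambda} c_\lambda(q)\, \elementaryE_\lambda(\xvec)$. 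Finally, $(q+1)^{A}$ is a polynomial in $q$ with nonnegative integer coefficients, so multiplying by this scalar preserves $\elementaryE$-positivity, and the corollary follows.

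The main obstacle is not in this corollary at all; it is the supporting statement that $\hallLittlewoodT_\lambda(\xvec;q+1)$ is $\completeH$-positive. That fact cannot be read off directly from \eqref{eq:HLdominance}, since the coefficients $c_\mu(q) \in \setZ[q]$ there need not lie in $\setN[q]$; one must exploit the $(q+1)$-shift, for instance via the raising-operator formula \eqref{eq:hallLittlewoodHDef} or via a suitable combinatorial model for Kostka--Foulkes polynomials under the shift. Once that theorem is available, the corollary is a one-line consequence of the identity \eqref{eq:relationWithHL} together with the elementary observation that $\omega$ swaps $\completeH$- and $\elementaryE$-positivity.
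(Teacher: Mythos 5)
Your formal reduction is exactly the paper's: substitute $q\mapsto q+1$ in \eqref{eq:relationWithHL}, apply $\omega$ (which exchanges $\completeH_\lambda$ and $\elementaryE_\lambda$), and note that multiplication by $(q+1)^{A}\in\setN[q]$ preserves positivity. That part is correct and matches the paper word for word in substance.

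The gap is that the statement you invoke --- $\completeH$-positivity of $\hallLittlewoodT_{\mu'}(\xvec;q+1)$ --- is not available as a separately proved theorem to cite: although it is announced in the introduction, in the paper it is established precisely inside the proof of this corollary, so your proposal defers the entire content of the result to a ``supporting statement'' that you only gesture at. You correctly observe that \eqref{eq:HLdominance} does not suffice and that the raising-operator formula \eqref{eq:hallLittlewoodHDef} is the right tool, but you stop short of the computation that actually produces positivity. It is short: for each pair $i<j$,
\[
\frac{1-R_{ij}}{1-(q+1)R_{ij}}
=(1-R_{ij})\sum_{t\geq 0}(q+1)^{t}R_{ij}^{t}
=1+\sum_{t\geq 1}\bigl((q+1)^{t}-(q+1)^{t-1}\bigr)R_{ij}^{t}
=1+\sum_{t\geq 1}q(1+q)^{t-1}R_{ij}^{t},
\]
so each factor in the product \eqref{eq:hallLittlewoodHDef} evaluated at $q+1$ has coefficients in $\setN[q]$, and applying the product of these operators to $\completeH_{\mu'}(\xvec)$ yields a $\completeH$-expansion of $\hallLittlewoodT_{\mu'}(\xvec;q+1)$ with coefficients in $\setN[q]$. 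With this supplied, your deduction of the corollary goes through and coincides with the paper's argument.
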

\begin{proof}
 Using \eqref{eq:relationWithHL}, it suffices to prove that $\hallLittlewoodT_{\lambda'}(\xvec;q+1)$
 is $\completeH$-positive.  From \eqref{eq:hallLittlewoodHDef}, we have that 
\begin{align}
\hallLittlewoodT_{\mu'}(\xvec;q+1) &= \prod_{i<j} \frac{1-R_{ij}}{1-(q+1)R_{ij}} \completeH_{\mu'}(\xvec) \\
&= \prod_{i<j}(1-R_{ij})(1+(q+1)R_{ij} + (q+1)^2 R^2_{ij} + \dotsb) \completeH_{\mu'}(\xvec) \\
&=\prod_{i<j}
(1+qR_{ij} + q(q+1) R^2_{ij}+ q(q^2+1) R^3_{ij} + \dotsb) \completeH_{\mu'}(\xvec) \\
&=\prod_{i<j} \left( 1 + \sum_{t \geq 1} q(1+q)^{t-1} R^t_{ij} \right) \completeH_{\mu'}(\xvec) \label{eq:product}.
\end{align}
This proves positivity.
\end{proof}

\begin{problem}
 Find a bijective proof that $\LLTc_\mu(\xvec;q+1)$ is equal to $\LLT_\mu(\xvec;q+1)$,
 by interpreting each term in \cref{eq:product},
 and combine with \eqref{eq:relationWithHL}.
 
It is tempting to believe that summing over the orientations of $\Gamma_\mu$ in \cref{def:HLDiagram}
where all edges in condition (b) are oriented in a non-descending manner 
would give exactly $\omega \hallLittlewoodT_{\mu'}(\xvec;q+1)$. 
However, this fails for $\mu=222$.
\end{problem}

\section{Generalized cocharge and \texorpdfstring{$\elementaryE$}{e}-positivity}\label{sec:cocharge}

In \cite{HuhNamYoo2018}, the authors consider a certain classes of unicellular LLT polynomials 
that can be expressed in a particularly nice way.
These are polynomials indexed by complete graphs, line graphs and a few other families.
In this section, we prove that the corresponding 
LLT polynomials are positive in the elementary basis.
In fact, we do this by giving a rather surprising relationship between 
a type of cocharge and orientations.
\medskip 

For a semi-standard Young tableau $T$, the reading word is formed by reading the boxes of 
$\lambda$ row by row from bottom to top, and from left to right within each row.
The \defin{descent set} of a standard Young tableau $T$ is defined as
\[
 \Des(T) \coloneqq \{ i \in [n-1] : \text{$i+1$ appear before $i$ in the reading word}  \}.
\]
Given a Dyck diagram $\avec$, we define the \defin{weight} as
\begin{align}
 \wt_\avec(T) = \sum_{i \in \Des(T)} a_{n+1-i}.
\end{align}
The weight here is also known as \defin{cocharge} whenever $\avec$ is the complete graph $(0,1,2,\dotsc,n-1)$,
see for example \cite{qtCatalanBook}.
If we let $T'$ denote the transposed tableau, then for any $T$ and $\avec$, we have
\[
\Des(T') = [n-1]\setminus \Des(T) \text{ and }
 \wt_\avec(T') = (a_1+\dotsb+a_n) - \wt_\avec(T).
\]
It will be convenient to define 
\begin{align}\label{eq:chargeDef}
\modwt_\avec(T)\coloneqq \wt_\avec(T') = \sum_{i \notin \Des(T)} a_{n+1-i}. 
\end{align}

\begin{example}
Let $\avec=(0,1,2,3,3,2,2,3)$ and 
\[
 T = 
 \begin{ytableau}
  1 & 3 & 4 \\
  2 & 6 & 8 \\
  5 \\
  7
 \end{ytableau}
\]
The reading word of $T$ is $75268134$, $\Des(T) = \{1,4,6\}$ 
so $\wt_\avec(T) = a_8 + a_5+a_3 = 7$ and $\modwt_\avec(T) = 9$.
\end{example}

\begin{definition}
Given an area sequence $\avec$ of length $n$, we define 
the polynomial
\begin{equation}
 \FLLT_\avec(\xvec;q) \coloneqq \sum_{\lambda \vdash n} \sum_{T \in \SYT(\lambda)} q^{\wt_\avec(T)} \schurS_\lambda(\xvec).
\end{equation}
\end{definition}
From this definition, it follows that
\begin{equation}
 \omega \FLLT_\avec(\xvec;q) = 
 \sum_{\lambda \vdash n} \sum_{T \in \SYT(\lambda)} q^{\modwt_\avec(T)} \schurS_{\lambda}(\xvec).
\end{equation}

The following proposition is a collection of results in \cite{HuhNamYoo2018}.
\begin{proposition}\label{prop:lollipop}
We have that $\FLLT_\avec(\xvec;q) = \LLT_\avec(\xvec;q)$ for the 
the families of graphs listed in \cref{subsec:graphs}: the complete graphs, line graphs, lollipop graphs, 
melting complete graphs and melting lollipop graphs.
\end{proposition}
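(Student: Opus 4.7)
The plan is to verify the equality $\FLLT_\avec(\xvec;q) = \LLT_\avec(\xvec;q)$ family by family, using a combination of classical identities and the recursive machinery developed in \cref{sec:recProp}. The overall strategy is to reduce the five families listed in \cref{subsec:graphs} to the two base cases of complete graphs and line graphs, where the identity can be checked directly, and then propagate upward via Lee's recursion.

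First, for the complete graph $\avec=(0,1,\dots,n-1)$, the weight $\wt_\avec(T) = \sum_{i\in\Des(T)}(n-i)$ coincides exactly with the classical cocharge statistic on standard Young tableaux. The well-known identity $\LLT_{K_n}(\xvec;q) = \sum_\lambda K_{\lambda,1^n}(q)\schurS_\lambda(\xvec)$, combined with the Lascoux--Schützenberger cocharge formula $K_{\lambda,1^n}(q) = \sum_{T \in \SYT(\lambda)} q^{\mathrm{cc}(T)}$, yields this case immediately. For the line graph $\avec=(0,1,1,\dots,1)$, the weight simplifies to $\wt_\avec(T) = |\Des(T) \cap \{1,\dots,n-2\}|$, and I would match this with the explicit formula from \cite[Prop.~5.18]{AlexanderssonPanova2016} via the Gessel fundamental quasisymmetric expansion of $\LLT_\avec$, since both sides can be read off from descent sets of $\SYT(\lambda)$.

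For the remaining three families (melting complete, lollipop, melting lollipop), I would use Lee's recursion and its dual (\cref{prop:lee} and \cref{prop:dualLee}) to express each polynomial in terms of polynomials indexed by strictly simpler area sequences. The recursive scheme sketched at the end of \cref{sec:recPropConj} shows that melting lollipops — and hence lollipops and melting complete graphs as subfamilies — can be reached from complete graphs and line graphs using only \cref{prop:lee}. Thus, if I can establish that $\FLLT_\avec(\xvec;q)$ satisfies the \emph{same} Lee recursion as $\LLT_\avec(\xvec;q)$, the identity at the two base cases will propagate through the recursion tree to cover every graph in the list. This mirrors the inductive approach employed in \cite{HuhNamYoo2018}.

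The main obstacle is verifying that $\FLLT_\avec$ satisfies Lee's recursion $\FLLT_{\avec^0} - \FLLT_{\avec^1} = q(\FLLT_{\avec^1} - \FLLT_{\avec^2})$ at an admissible edge $(i,j)$. Since only $a_j$ differs among $\avec^0,\avec^1,\avec^2$, the weight $\wt_\avec(T)$ changes in a controlled way depending only on whether $n+1-j \in \Des(T)$, and the recursion reduces (after Schur-coefficient extraction) to a numerical identity between two generating functions over $\SYT(\lambda)$ partitioned by descent behavior at position $n+1-j$. The hard part is producing a sign-reversing involution — or equivalently a weight-preserving bijection — on standard Young tableaux that witnesses this identity; this is the technical heart of the argument and is exactly the content one needs to extract from \cite{HuhNamYoo2018} in order to make the recursive reduction go through.
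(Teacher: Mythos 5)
The paper itself gives no proof of this proposition --- it is stated as a collection of results from \cite{HuhNamYoo2018} --- so your sketch has to be judged as a reconstruction of that reference. Within it, you have misjudged where the difficulty sits. The step you call the technical heart, that $\FLLT$ satisfies the relation \eqref{eq:leeFormula}, needs no involution at all: since $\avec^0,\avec^1,\avec^2$ differ only in the single entry $a_j$, a tableau $T$ with $n+1-j\notin\Des(T)$ has the same weight for all three sequences and contributes $0$ to both sides, while a tableau with $n+1-j\in\Des(T)$ contributes $q^{w}-q^{w-1}$ on the left and $q(q^{w-1}-q^{w-2})$ on the right, where $w=\wt_{\avec^0}(T)$; these agree identically, and in fact for any $j$ with $a_j\geq 2$ for which all three sequences are area sequences, admissible or not. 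Two cautions, however: the dual relation of \cref{prop:dualLee}, which you also propose to invoke, changes the two entries $a_j,a_{j-1}$ and does \emph{not} transfer to $\FLLT$ by this argument (nor need it hold for $\FLLT$), so the reduction must be run with \cref{prop:lee} only, as in \cref{sec:recPropConj}; and your line-graph weight is wrong: for $\avec=(0,1,1,\dotsc,1)$ one has $a_{n+1-i}=1$ for every $i\in[n-1]$, so $\wt_\avec(T)=|\Des(T)|$, not $|\Des(T)\cap\{1,\dotsc,n-2\}|$ (already for $n=3$ your formula gives $\schurS_3+(1+q)\schurS_{21}+q\schurS_{111}$ instead of the correct $\schurS_3+2q\schurS_{21}+q^2\schurS_{111}$).

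The genuine gap is in the base cases, not the recursion. The reduction sketched in \cref{sec:recPropConj} does not bottom out at complete graphs and line graphs alone: the $k=m-1$ end of the linear system is the \emph{disjoint union} of a complete graph and a line graph, and when the tail is exhausted the melting complete graphs likewise require $K_{n-1}$ together with an isolated vertex. These unions are themselves members of the families in the proposition, so you may not assume the identity there; for $\LLT$ they factor as products of the polynomials of the components (as in \cref{lem:rectangularLLT}), but $\FLLT_\avec$ is a single descent-weighted sum over $\SYT$ of size $n$, and the claim that it equals such a product is a nontrivial identity that your two base cases do not supply. This is the content that actually has to be extracted from \cite{HuhNamYoo2018} (or proved directly, e.g.\ for lollipop graphs), and without it your proposal is an outline rather than a proof; with it, the argument would indeed go through, since $\FLLT$ does satisfy \cref{prop:lee} wherever it applies and the complete-graph case is, as you say, the Lascoux--Sch\"utzenberger cocharge formula.
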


Given a composition $\gamma$, let
\[
D(\gamma) \coloneqq \{\gamma_1,\gamma_1+\gamma_2,\dotsc,\gamma_1+\gamma_2+\dotsc+\gamma_{\ell}\}.
\]
\begin{lemma}\label{lem:stdBijection}
Let $\lambda\vdash n$ and let $\gamma$ be a composition of $n$ with $\ell$ parts.
Then the standardization map
\[
\std: \{ S \in \SSYT(\lambda,\gamma) \} \to \{ T \in \SYT(\lambda) : \Des(T) \subseteq D(\gamma) \}
\]
is a bijection.
\end{lemma}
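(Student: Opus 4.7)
The plan is to exhibit an explicit two-sided inverse. Recall that $\std(S)$ is defined by relabeling the $\gamma_j$ cells of $S$ containing $j$ with the consecutive integers $\gamma_1+\dotsb+\gamma_{j-1}+1,\dotsc,\gamma_1+\dotsb+\gamma_j$ in left-to-right column order; this is unambiguous because the cells of an SSYT carrying a given entry form a horizontal strip and so lie in distinct columns.

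First I would verify that $\std(S)\in\SYT(\lambda)$ and $\Des(\std(S))\subseteq D(\gamma)$. Strict row-increase follows from weak row-increase of $S$ together with the left-to-right tie-breaking rule, and strict column-increase follows from strict column-increase of $S$ since two cells in the same column carry different $S$-values and hence fall into different blocks. For the descent condition, suppose $i\notin D(\gamma)$, so that $i$ and $i+1$ lie in the same block. Then they correspond to cells of a common $S$-value $j$, at positions $(r,c)$ and $(r',c')$ with $c<c'$. A short SSYT argument shows $r\geq r'$: otherwise the cell $(r,c')$ would have entry both $\geq j$ (weak row-increase) and $<j$ (strict column-increase using $(r',c')$), a contradiction. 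Thus $i+1$ sits weakly above $i$ and $i\notin\Des(\std(S))$.

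Conversely, given $T\in\SYT(\lambda)$ with $\Des(T)\subseteq D(\gamma)$, I would define $\std^{-1}(T)$ by replacing every entry in the $j$-th block by $j$. The content is $\gamma$, and weak row-increase is immediate since a strictly increasing pair of integers in adjacent row cells of $T$ has weakly increasing block indices. The key step is strict column-increase: if $(r,c)$ contains $a$ and $(r+1,c)$ contains $b$ in $T$, then $a<b$, and I claim $a,b$ lie in different blocks. Indeed, if instead $a,a+1,\dotsc,b$ all lay in the same block, then every $k\in\{a,\dotsc,b-1\}$ would satisfy $k\notin D(\gamma)\supseteq\Des(T)$, so $k+1$ would lie weakly above $k$ in $T$; iterating from $k=a$ up to $k=b-1$ places $b$ in a row weakly above row $r$, contradicting the assumption that $b$ is in row $r+1$.

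The two constructions are mutually inverse by design: $\std^{-1}$ forgets the labels within each horizontal strip but preserves its cells, while $\std$ recovers those labels from the left-to-right column order established in the first paragraph. The main obstacle is verifying strict column-increase of $\std^{-1}(T)$, where the hypothesis $\Des(T)\subseteq D(\gamma)$ has to be applied iteratively along the chain $a<a+1<\dotsb<b$; the SSYT side is routine once the horizontal-strip fact is established.
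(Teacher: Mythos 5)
Your proof is correct, and it is precisely the standard standardization argument that the paper dismisses as ``straightforward from the definition of standardization and descents'' with a citation to the literature, so you are supplying in full the routine details the paper omits. The only point stated a bit tersely is the verification that $\std(\std^{-1}(T))=T$, which needs that within each block the entries of $T$ occur in increasing column order; this follows from the same horizontal-strip argument you already gave, applied to $\std^{-1}(T)$ together with the weakly-above property coming from $\Des(T)\subseteq D(\gamma)$, so there is no genuine gap.
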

\begin{proof}
This is straightforward from the definition of standardization and descents,
see for example \cite[p. 5]{qtCatalanBook}.
\end{proof}

We shall now introduce a different statistic on orientations.
Given $\theta \in O(\Gamma_\avec)$, we say that a vertex $v$ is a \defin{bottom} of $\theta$
if there is no $u<v$ such that $(u,v)$ is ascending in $\theta$.
Let $s_1,\dotsc,s_k$ be the bottoms ordered decreasingly and let $s_0 \coloneqq n+1$.
By definition, vertex $1$ is always a bottom.
Let $\sigma(\theta)$ be defined as the composition of $n$
with the parts given by $\{ s_{i-1}-s_{i} : i=1,\dotsc,k \}$
and note that $D(\sigma(\theta)) = \{n+1-s_i : i=1,2,\dotsc,k-1\}$.

\begin{example}
The orientation $\theta$ in \eqref{eq:sourcesExample} has vertices $1$, $3$ and $6$ as bottoms.
Furthermore, $\sigma(\theta)=(1,3,2)$ and $D(\sigma(\theta)) = \{1,4\}$.
\begin{align}\label{eq:sourcesExample}
\begin{ytableau}
*(lightgray)   & *(lightgray)   & *(lightgray)   &      &    & *(yellow) 6 \\
*(lightgray)   & *(lightgray)   &   \to  & \to    & *(yellow) 5\\
  \to  &  \to  &     & *(yellow) 4\\
    &     & *(yellow) 3\\
  \to  & *(yellow) 2\\
*(yellow) 1
\end{ytableau}
\end{align}
Note that $\hrvpp(\theta)=(5,1)$ so $\sigma$ and $\hrvpp$ are indeed very different.
\end{example}

The following theorem was proved for the complete graph and the line graph in
\cite{AlexanderssonPanova2016}. We can now generalize it to 
all unit interval graphs.
\begin{theorem}
Let $\avec$ be an area sequence of length $n$.
Then
\begin{align}\label{eq:chargeAsOrientations}
\FLLT_\avec(\xvec;q+1)
=
\sum_{\theta \in O(\Gamma_\avec)} q^{\asc(\theta)} \elementaryE_{\sigma(\theta)}(\xvec).
\end{align}
\end{theorem}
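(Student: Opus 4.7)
The plan is to apply the involution $\omega$ to both sides and reduce the identity to a per-tableau statement. Using $\omega \schurS_\lambda = \schurS_{\lambda'}$ together with the bijection $T \leftrightarrow T'$ on standard Young tableaux (which satisfies $\wt_\avec(T') = \modwt_\avec(T)$ by \eqref{eq:chargeDef}), the image of the left-hand side is
\begin{equation*}
\omega \FLLT_\avec(\xvec;q+1) = \sum_{\lambda \vdash n} \sum_{T \in \SYT(\lambda)} (q+1)^{\modwt_\avec(T)} \schurS_\lambda(\xvec),
\end{equation*}
while the image of the right-hand side is $\sum_\theta q^{\asc(\theta)} \completeH_{\sigma(\theta)}(\xvec)$. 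Expanding $\completeH_{\sigma(\theta)} = \sum_\lambda K_{\lambda,\sigma(\theta)}\schurS_\lambda$ and invoking \cref{lem:stdBijection} to write $K_{\lambda,\sigma(\theta)}$ as the number of $T \in \SYT(\lambda)$ with $\Des(T) \subseteq D(\sigma(\theta))$, equating Schur coefficients and switching the order of summation reduces the problem to proving, for every standard Young tableau $T$ on $n$ boxes,
\begin{equation*}
(q+1)^{\modwt_\avec(T)} = \sum_{\substack{\theta \in O(\Gamma_\avec) \\ \Des(T) \subseteq D(\sigma(\theta))}} q^{\asc(\theta)}.
\end{equation*}

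For the per-tableau identity, set $V_T \coloneqq \{n+1-d : d \in \Des(T)\} \subseteq \{2,\dots,n\}$. Since $D(\sigma(\theta)) = \{n+1-s : s \text{ is a bottom of } \theta \text{ with } s > 1\}$, the hypothesis $\Des(T) \subseteq D(\sigma(\theta))$ is equivalent to requiring that every vertex of $V_T$ be a bottom of $\theta$. By the definition of ``bottom,'' this forces each of the $a_v$ edges $(u,v)$ with $u < v$ and $v \in V_T$ to be oriented as a descent. The total number of such forced descending edges is $\sum_{v \in V_T} a_v = \wt_\avec(T)$. The remaining edges---exactly $\modwt_\avec(T)$ of them, all entering vertices $v \notin V_T$---are unconstrained and may be oriented independently. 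Since forced descending edges contribute the factor $1$ to $q^{\asc(\theta)}$ while each free edge independently contributes a factor $1+q$, summing yields $(q+1)^{\modwt_\avec(T)}$, as desired.

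The argument is largely bookkeeping once the right reformulation is in place. The essential observation---and the reason for passing through $\omega$ rather than working directly with $\elementaryE_{\sigma(\theta)}$ and $\wt_\avec$---is that the ``bottom'' condition on $\theta$ constrains edges \emph{into} a prescribed set of vertices, and the number of such edges is exactly $\wt_\avec(T)$, leaving precisely $\modwt_\avec(T)$ edges free. This alignment is the only substantive step; everything else is a routine generating-function calculation. In particular, the resulting proof is uniform in $\avec$, which gives an $\elementaryE$-positive formula in closed form for all area sequences where $\FLLT_\avec = \LLT_\avec$, recovering and extending \cref{prop:lollipop}.
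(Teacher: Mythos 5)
Your proposal is correct and follows essentially the same route as the paper: apply $\omega$, expand $\completeH_{\sigma(\theta)}$ via Kostka numbers, invoke \cref{lem:stdBijection}, and reduce to the per-tableau identity \eqref{eq:sytBijEq2}, which the paper resolves by the same orientation-counting observation you spell out (edges into the forced bottoms $V_T$ are descending, the remaining $\modwt_\avec(T)$ edges each contribute a factor $1+q$). Your write-up merely makes the paper's terse final step explicit; no substantive difference.
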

\begin{proof}
We apply $\omega$ on both sides of \cref{eq:chargeAsOrientations}, so it suffices to prove that
\begin{align}\label{eq:chargeAsOrientations2}
\omega \FLLT_\avec(\xvec;q+1)
=
\sum_{\theta \in O(\Gamma_\avec)} q^{\asc(\theta)} \completeH_{\sigma(\theta)}(\xvec).
\end{align}
Recall, in e.g. \cite{Macdonald1995}, the standard expansion
\begin{align}
 \completeH_\nu(\xvec)= \sum_{\lambda} K_{\lambda,\nu} \schurS_{\lambda}(\xvec),
\end{align}
where $K_{\lambda,\nu} = |\SSYT(\lambda,\nu)|$ are the Kostka coefficients. 
Thus, comparing both sides of \eqref{eq:chargeAsOrientations2} in the Schur basis,
it suffices to show that for every partition $\lambda$,
\begin{align*}
\sum_{T \in \SYT(\lambda)} (1+q)^{\modwt_\avec(T)}
=
\sum_{\theta \in O(\Gamma_\avec)} q^{\asc(\theta)} K_{\lambda,\sigma(\theta)}.
\end{align*}
Using \cref{lem:stdBijection} in the right hand side and unraveling the definition in the left hand side, 
it is enough to prove that
\begin{align*}
\sum_{T \in \SYT(\lambda)} 
\prod_{i \notin \Des(T)} (1+q)^{a_{n+1-i}}
=
\sum_{T \in \SYT(\lambda)}
\sum_{\substack{\theta \in O(\Gamma_\avec) \\ \Des(T) \subseteq D(\sigma(\theta)) }} q^{\asc(\theta)}.
\end{align*}
It then suffices to prove that for a fixed $T \in \SYT(\lambda)$ we have
\begin{align}\label{eq:sytBijEq2}
\prod_{i \notin \Des(T)} (1+q)^{a_{n+1-i}}
=
\sum_{\substack{\theta \in O(\Gamma_\avec) \\ \Des(T) \subseteq D(\sigma(\theta)) }} q^{\asc(\theta)}.
\end{align}
Both sides may now be interpreted as a weighted sum over all
orientations of $\Gamma_\avec$ where no ascending edges end in $\{i : n+1-i \in \Des(T) \}$.
\end{proof}

\begin{corollary}
All families of unicellular LLT polynomials $\LLT_\avec(\xvec;q+1)$
indexed by complete graphs, line graphs, lollipop graphs and melting lollipop graphs are $\elementaryE$-positive.
\end{corollary}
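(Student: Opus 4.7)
The plan is to combine the main theorem of the section, \cref{eq:chargeAsOrientations}, with \cref{prop:lollipop} in a purely formal way. \cref{prop:lollipop} asserts that $\FLLT_\avec(\xvec;q) = \LLT_\avec(\xvec;q)$ for each of the families listed (complete, line, lollipop, melting complete, melting lollipop), while the theorem just proven gives an explicit expansion
\[
\FLLT_\avec(\xvec;q+1) = \sum_{\theta \in O(\Gamma_\avec)} q^{\asc(\theta)} \elementaryE_{\sigma(\theta)}(\xvec).
\]
The right-hand side has coefficients in $\setN[q]$ because $\asc(\theta) \geq 0$ is a nonnegative integer statistic on orientations and each orientation contributes a single elementary term.

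Concretely, I would proceed as follows. First, fix $\avec$ in one of the listed families. By \cref{prop:lollipop}, substitute $q \mapsto q+1$ to get
\[
\LLT_\avec(\xvec;q+1) = \FLLT_\avec(\xvec;q+1).
\]
Second, apply the theorem to rewrite the right-hand side as $\sum_{\theta \in O(\Gamma_\avec)} q^{\asc(\theta)} \elementaryE_{\sigma(\theta)}(\xvec)$. This is manifestly $\elementaryE$-positive, finishing the corollary.

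There is essentially no obstacle here: all the difficulty was absorbed into proving the identity \cref{eq:chargeAsOrientations} (via the bijective reduction to \cref{eq:sytBijEq2}) and into the identifications in \cref{prop:lollipop} coming from \cite{HuhNamYoo2018}. I would write the proof in one short paragraph, simply citing the theorem and \cref{prop:lollipop} and noting that the expansion realizes the desired positivity. It is worth remarking, as the introduction already does, that the statistic $\sigma(\theta)$ produced by this argument is \emph{different} from the partition-valued statistic $\hrvpp(\theta)$ appearing in \cref{conj:main}, so this gives an independent $\elementaryE$-positive formula specific to these families rather than a proof of the main conjecture in these cases.
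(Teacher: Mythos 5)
Your proposal is correct and matches the paper's (implicit) argument exactly: the corollary follows immediately by combining \cref{prop:lollipop} with the expansion \eqref{eq:chargeAsOrientations}, whose coefficients are manifestly in $\setN[q]$. Your closing remark that $\sigma(\theta)$ differs from $\hrvpp(\theta)$ also mirrors the paper's own comment following the corollary.
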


Notice that the formula in \eqref{eq:chargeAsOrientations} is different from
the conjectured formula in \cref{conj:main}, since $\hrvpp(\theta)$ and $\sigma(\theta)$
are different. 
This is not surprising as $\LLT_\avec(\xvec;q)$ and $\FLLT_\avec(\xvec;q)$
are not equal for general $\avec$. 
However, it is rather remarkable that \cref{conj:main} implies 
that \eqref{eq:chargeAsOrientations} and \cref{eq:conjFormula} agree whenever 
$\LLT_\avec(\xvec;q) = \FLLT_\avec(\xvec;q)$.

\section{A possible approach to settle the main conjecture}\label{sec:powersum}

In \cite{AlexanderssonPanova2016} and later in \cite{AlexanderssonSulzgruber2018} (with a different approach) 
we gave formulas for the power-sum expansion of all vertical-strip LLT polynomials. 
The unicellular case is a straightforward consequence of \cref{lem:plethRelation} (see \cite{AlexanderssonPanova2016,HaglundWilson2017})
together with the power-sum expansion formula for the chromatic symmetric symmetric functions.
We note that the formula in the chromatic case was first conjectured by Shareshian--Wachs and later proved by Athanasiadis \cite{Athanasiadis15}.

It is straightforward to expand \eqref{eq:conjFormula} in the power-sum basis,
so to settle \cref{conj:main}, it suffices to show that 
$\omega \LLT_\avec(\xvec;q+1) = \omega \LLTc_\avec(\xvec;q+1)$ for all $\avec$ by comparing 
coefficients of $\psumP_\lambda/z_\lambda$.
We shall now introduce the necessary terminology from \cite{AlexanderssonSulzgruber2018} 
to state  \cref{conj:main} in this form.
\medskip 

For any subset $S \subseteq E(\Gamma_\avec)$,
let $P(S)$ denote the poset given by the transitive closure of the edges in $S$.
Given a poset $P$ on $n$ elements, let $\opsurj(P)$ be the set of order-preserving 
surjections $f: P\to [k]$ for some $k$.
The \defin{type} of a surjection $f$ is defined as
\[
 \alpha(f) \coloneqq (|f^{-1}(1)|,|f^{-1}(2)|,\dotsc,|f^{-1}(k)|),
\]
and this is a composition of $n$ with $k$ parts.
Let $\opsurj_\alpha(P) \subseteq \opsurj(P)$ be the set of surjections of type $\alpha$.
Finally, let $\opsurj^\ast_\alpha(P) \subseteq \opsurj_\alpha(P)$
be the set of surjections $f \in \opsurj_\alpha(P)$ such that for each $j \in [k]$, 
$f^{-1}(j)$ is a subposet of $P$ with a unique minimal element.

\begin{proposition}[{See \cite[Thm. 5.6, Thm. 7.10]{AlexanderssonSulzgruber2018}}]\label{prop:pexpansion}
The power-sum expansion of $\omega \LLT_{\avec}(\xvec;q+1)$ is given as
 \begin{equation}
 \omega \LLT_{\avec}(\xvec;q+1) = \sum_{\theta \in O(\avec)} q^{\asc(\theta)} 
 \sum_{\lambda \vdash n} \frac{\psumP_\lambda(\xvec)}{z_\lambda} |\opsurj^\ast_\lambda(P(\theta))|
\end{equation}
where $P(\theta)$ is the poset on $[n]$ and edges given by the 
transitive closure of the ascending edges in $\theta$.
\end{proposition}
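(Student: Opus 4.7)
My plan is to reorganize the coloring sum defining $\LLT_\avec(\xvec;q+1)$ by associating each coloring (together with a subset of its ascents) to an orientation of $\Gamma_\avec$, and then to pass to the power-sum basis using the theory of $P$-partitions.

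I would start from $\LLT_\avec(\xvec;q+1) = \sum_\kappa \xvec^\kappa (q+1)^{\asc(\kappa)}$, expand $(q+1)^{\asc(\kappa)} = \sum_{B \subseteq \asc(\kappa)} q^{|B|}$, and swap the order of summation. To each pair $(\kappa, B)$ associate the orientation $\theta \in O(\avec)$ whose set of ascending edges is exactly $B$ (every other edge reversed). Under this map, the preimage of a given $\theta$ is precisely the set of colorings $\kappa$ satisfying $\kappa(u)<\kappa(v)$ on every edge $(u,v) \in \asc(\theta)$, so
\[
 \LLT_\avec(\xvec;q+1) = \sum_{\theta \in O(\avec)} q^{\asc(\theta)}\, F^{<}_{P(\theta)}(\xvec),
\]
where $F^{<}_P$ denotes the strict $P$-partition generating function on the naturally labeled poset $P$. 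Applying $\omega$ termwise and invoking Stanley's classical identity $\omega F^{<}_P = F^{\leq}_P$ (with $F^{\leq}_P$ the weak $P$-partition generating function) yields
\[
 \omega \LLT_\avec(\xvec;q+1) = \sum_{\theta \in O(\avec)} q^{\asc(\theta)}\, F^{\leq}_{P(\theta)}(\xvec).
\]

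The remaining step is to obtain the power-sum expansion with coefficient $|\opsurj^\ast_\lambda(P(\theta))|$. I would parametrize each weak $P(\theta)$-partition $\kappa$ as a pair $(f,g)$, with $f: P(\theta) \to [k]$ an order-preserving surjection onto the image of $\kappa$ and $g: [k] \to \setN$ strictly increasing, and then perform M\"obius inversion on the lattice of set partitions of $[n]$: surjections whose fibers contain multiple $P(\theta)$-minimal elements should cancel against coarsenings obtained by merging those minima. The surjections surviving the inversion are precisely those in $\opsurj^\ast$, and the remaining sum over strictly increasing value sequences produces the weight $\psumP_\lambda / z_\lambda$.

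The main obstacle is this aggregate cancellation, because the putative termwise identity $F^{\leq}_P = \sum_\lambda \frac{\psumP_\lambda}{z_\lambda}|\opsurj^\ast_\lambda(P)|$ fails for a generic naturally labeled poset (for instance the V-shape $\{1<3,\, 2<3\}$, which arises as $P(\theta)$ for an orientation of $K_3$ and whose $F^{\leq}$ is not even symmetric). The cancellations must therefore occur across distinct orientations of $\Gamma_\avec$, and producing an explicit sign-reversing involution that simultaneously modifies $\theta$ and the fiber structure of $f$ is the technical heart of the argument. An alternative route avoids this step entirely: combine \cref{lem:plethRelation} with Athanasiadis's power-sum formula for $\omega\chrom_\avec(\xvec;q)$, then track the plethystic substitution $\xvec \mapsto \xvec/(q-1)$ together with the shift $q \mapsto q+1$ to identify the coefficient of $\psumP_\lambda/z_\lambda$ with $\sum_{\theta} q^{\asc(\theta)}|\opsurj^\ast_\lambda(P(\theta))|$.
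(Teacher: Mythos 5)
Your opening reduction is sound: expanding $(q+1)^{\asc(\kappa)}$ over subsets of ascents and grouping by the resulting orientation does give $\LLT_\avec(\xvec;q+1)=\sum_{\theta\in O(\avec)}q^{\asc(\theta)}F^{<}_{P(\theta)}(\xvec)$, and the passage $\omega F^{<}_{P}=F^{\leq}_{P}$ is legitimate provided you work with the standard extension of $\omega$ to quasisymmetric functions (each summand is only quasisymmetric, so the termwise application needs that caveat). But from there the proposal does not reach the stated proposition. As you yourself observe, the termwise identity $F^{\leq}_{P}=\sum_\lambda \frac{\psumP_\lambda}{z_\lambda}\lvert\opsurj^\ast_\lambda(P)\rvert$ is false for a general $P(\theta)$, so the whole content of the proposition is the aggregate cancellation across orientations. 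The ``M\"obius inversion on the lattice of set partitions'' step is not actually set up: you do not specify the signed sum being inverted, why fibers with several minimal elements cancel, or how the cancellation is allowed to move mass between different orientations $\theta$ while preserving the weight $q^{\asc(\theta)}$. That sign-reversing involution is precisely what the cited reference proves (it is the substance of Theorems 5.6 and 7.10 of Alexandersson--Sulzgruber), and without it your argument only restates the proposition in an equivalent form.

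For comparison: the paper itself gives no internal proof here --- the proposition is quoted from \cite{AlexanderssonSulzgruber2018}, with the remark in \cref{sec:powersum} that the unicellular case also follows from \cref{lem:plethRelation} together with Athanasiadis's proof of the Shareshian--Wachs power-sum formula. Your ``alternative route'' is exactly that remark, but as written it is a single sentence, and the hard part is hidden in the phrase ``identify the coefficient'': Athanasiadis's formula for $\omega\chrom_\avec(\xvec;q)$ is a sum over \emph{acyclic} orientations with extra conditions, and after the plethystic substitution $p_k\mapsto p_k/(q^k-1)$ and the shift $q\mapsto q+1$ one still has to prove that the resulting coefficient of $\psumP_\lambda/z_\lambda$ equals $\sum_{\theta\in O(\avec)}q^{\asc(\theta)}\lvert\opsurj^\ast_\lambda(P(\theta))\rvert$, i.e.\ convert an acyclic-orientation count into an all-orientation count of $\opsurj^\ast$. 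That identification is again essentially the cited result, not bookkeeping. So the proposal is a reasonable reduction plus an honest admission that the technical heart is missing; as a proof of the proposition it has a genuine gap.
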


The family of functions $\LLTc_{\avec}(\xvec;q+1)$ has a similar 
expansion in terms of the power-sum symmetric functions.
\begin{lemma}
 The power-sum expansion of $\omega \LLTc_{\avec}(\xvec;q+1)$ is given as
 \begin{equation}
 \omega \LLTc_{\avec}(\xvec;q+1) = \sum_{\theta \in O(\avec)} q^{\asc(\theta)} 
 \sum_{\lambda \vdash n} \frac{\psumP_\lambda(\xvec)}{z_\lambda} |\opsurj^\ast_\lambda(B(\theta))|
\end{equation}
where $B(\theta)$ is the poset consisting of a disjoint union of chains with lengths
given by $\hrvpp(\theta)$.
\end{lemma}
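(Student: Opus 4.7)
My plan is to apply $\omega$ to the definition of $\LLTc_{\avec}(\xvec;q+1)$ and then recognize the resulting power-sum expansion as the generic formula for a poset that is a disjoint union of chains.

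Applying $\omega$ to \eqref{eq:conjFormula} and using $\omega\elementaryE_\mu = \completeH_\mu$ gives
\[
\omega \LLTc_{\avec}(\xvec;q+1) \;=\; \sum_{\theta \in O(\avec)} q^{\asc(\theta)}\,\completeH_{\hrvpp(\theta)}(\xvec).
\]
Hence, orientation by orientation, the lemma reduces to the identity
\[
\completeH_{\hrvpp(\theta)}(\xvec) \;=\; \sum_{\lambda \vdash n} \frac{\psumP_\lambda(\xvec)}{z_\lambda}\,|\opsurj^\ast_\lambda(B(\theta))|.
\]
This is precisely what one obtains by applying the general power-sum expansion of \cite[Thm.~5.6, Thm.~7.10]{AlexanderssonSulzgruber2018} (the same reference invoked in \cref{prop:pexpansion}) to the poset $B(\theta)$ instead of $P(\theta)$, so the lemma follows directly from that technology.

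A self-contained proof is equally short. Since minimal elements of distinct chains in $B(\theta) = C_{\mu_1} \sqcup \dotsb \sqcup C_{\mu_\ell}$ (with $\mu = \hrvpp(\theta)$) are pairwise incomparable, an order-preserving surjection $f \colon B(\theta) \to [k]$ lies in $\opsurj^\ast$ exactly when each fibre $f^{-1}(j)$ is contained in a single chain. Such an $f$ therefore decomposes uniquely into an $\ell$-tuple of order-preserving surjections $f_i \colon C_{\mu_i} \to J_i$ together with a set partition $[k] = J_1 \sqcup \dotsb \sqcup J_\ell$, showing that both sides of the desired identity are multiplicative over disjoint unions of chains. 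The base case is classical: on a single chain $C_m$ every order-preserving surjection belongs to $\opsurj^\ast$ and is determined by its type composition, which yields $\sum_{\lambda \vdash m} \psumP_\lambda/z_\lambda = \completeH_m$.

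The main (and only) bit of real work is the compatibility check on disjoint unions: one must identify $|\opsurj^\ast_\lambda(B(\theta))|$ with $z_\lambda \sum_{(\nu^{(i)})} \prod_i z_{\nu^{(i)}}^{-1}$, where the sum runs over tuples $(\nu^{(1)}, \dotsc, \nu^{(\ell)})$ with $\nu^{(i)} \vdash \mu_i$ and $\bigcup_i \nu^{(i)} = \lambda$ as multisets. This is a routine multinomial-coefficient calculation: using $z_\lambda = \prod_i i^{m_i(\lambda)} m_i(\lambda)!$, the ratio $z_\lambda / \prod_i z_{\nu^{(i)}}$ counts the interleavings of the chain-wise type compositions into a single composition of $n$ whose unordered shape is $\lambda$. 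With this count in hand the lemma is immediate, and in any event it is already subsumed by the poset-partition framework cited above.
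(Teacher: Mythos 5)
Your proposal is correct and follows essentially the same route as the paper: apply $\omega$ to \eqref{eq:conjFormula} and identify the power-sum expansion of $\completeH_{\hrvpp(\theta)}$ with counts of order-preserving surjections with unique-minimum fibres on the disjoint union of chains $B(\theta)$, exactly the content of the cited expansion of elementary symmetric functions into power sums from \cite{Egecioglu1991} and \cite[Section 7]{AlexanderssonSulzgruber2018}. The only difference is that you spell out the chain-wise decomposition and the $z_\lambda/\prod_i z_{\nu^{(i)}}$ multinomial check explicitly, which the paper leaves to those references.
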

\begin{proof}
 This follows easily from the definition of $\LLTc_{\avec}(\xvec;q+1)$
 and the expansion of the elementary symmetric functions into power-sum symmetric functions,
 see \cite{Egecioglu1991} and \cite[Section 7]{AlexanderssonSulzgruber2018}.
\end{proof}

\begin{conjecture}[Equivalent with \cref{conj:main}]
For any area sequence $\avec$ of length $n$ and partition $\lambda \vdash n$,
\[
 \sum_{\theta \in O(\avec)} q^{\asc(\theta)} |\opsurj^\ast_\lambda(P(\theta))|
 =\sum_{\theta \in O(\avec)} q^{\asc(\theta)} |\opsurj^\ast_\lambda(B(\theta))|.
\]
\end{conjecture}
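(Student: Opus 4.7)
The plan is to construct a weight-preserving bijection between the two sides of the conjectured identity. Writing
\[
\mathcal{L} = \{(\theta,f) : \theta\in O(\avec),\, f\in\opsurj^\ast_\lambda(P(\theta))\},\qquad \mathcal{R} = \{(\theta,f) : \theta\in O(\avec),\, f\in\opsurj^\ast_\lambda(B(\theta))\},
\]
each weighted by $q^{\asc(\theta)}$, one seeks a weight-preserving bijection $\Phi\colon\mathcal{L}\to\mathcal{R}$. As a first reduction, by \cref{cor:unicellularImpliesVS} (which uses \cref{prop:verticalStripToUnicellular} and \cref{prop:verticalStripToUnicellularConj}) it suffices to treat the unicellular case, so one may fix a single area sequence $\avec$ and take $\svec=\emptyset$.

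A natural starting point is to refine the sum on both sides by the set partition $\hrvp(\theta)$: the right-hand side depends only on $\hrvpp(\theta)$, so for each such partition the $\mathcal{R}$-contribution is an explicit product of multinomials counting interval decompositions of the chains in $B(\theta)$. I would then attempt to match this, for each set partition $\pi$ of $[n]$ that arises as some $\hrvp(\theta)$, with the $\mathcal{L}$-contribution coming from pairs $(\theta,f)$ with $\hrvp(\theta)=\pi$. The crucial observation is that the \emph{unique minimum per fiber} condition on $\opsurj^\ast_\lambda(P(\theta))$ should, after summing over all $\theta$ with $\hrvp(\theta) = \pi$, reproduce the chain-interval count on the right.

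Alternatively---and I would pursue this path in parallel---one can try to prove the identity by induction using the recursions of \cref{sec:recProp}. By \cref{prop:recToAbelian}, every abelian unicellular LLT polynomial is obtained from rectangular sequences (for which $\LLT_\avec=\LLTc_\avec$ follows from \cref{lem:rectangularLLT} and \cref{lem:completeGraphLLTcRecursion}) by repeated applications of Lee's recursion (\cref{prop:lee}). Since $\LLT_\avec$ satisfies this recursion, it is enough to show the same for $\LLTc_\avec$. After the same subtractions used in the proof of \cref{prop:verticalStripRec}, this reduces to an orientation-level identity on the auxiliary diagrams of \eqref{eq:leeDiagrams}, for which the natural first attempt is a ``swap the labels on $i$ and $i+1$'' bijection analogous to the one in that proof.

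The main obstacle is that this local swap does \emph{not} in general preserve $\hrvpp(\theta)$: exchanging the orientations of edges incident to $i$ and $i+1$ can change which vertices share a highest reachable vertex, so two orientations agreeing on $\asc$ may have different $\hrvp$-partitions. Already in the complete graph on three vertices one sees that $P(\theta)$ and $B(\theta)$ differ at the level of individual orientations (for instance, $P(\theta)$ may be a ``V'' while $B(\theta)$ is a chain plus an isolated vertex), and the identity holds only after summation. Any successful proof will therefore need either a global rearrangement coupling several orientations, organised by $\hrvp(\theta)$, or an intermediate statistic interpolating between $P(\theta)$ and $B(\theta)$ that controls both sides simultaneously. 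The computer checks for $n\le 7$ mentioned in \cref{sec:recPropConj} confirm that such a rearrangement must exist, but constructing it explicitly is the essential difficulty.
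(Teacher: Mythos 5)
This statement is not proved in the paper: it is the closing conjecture, stated as an equivalent reformulation (via \cref{prop:pexpansion} and the power-sum expansion of $\LLTc$) of \cref{conj:main}, which remains open there. So there is no proof of the paper's to compare against, and, more importantly, your proposal is not a proof either: both of your routes stop exactly at the point where the actual mathematical content would have to appear. In the first route you never construct the bijection $\Phi$, nor do you verify the claimed matching, for a fixed set partition $\pi=\hrvp(\theta)$, between the $\mathcal{L}$-contribution and the chain-interval count on the $\mathcal{R}$-side; this matching is the whole conjecture, restated. In the second route you propose to show that $\LLTc$ satisfies Lee's recursion, which is precisely what the paper says it is unable to do (``surprisingly challenging''), and you concede that the natural swap bijection fails to preserve $\hrvpp(\theta)$ without offering a substitute. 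Your closing paragraph explicitly admits that the essential construction is missing, so the proposal is an outline of strategies, with an identified obstruction, rather than an argument.

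Two further points would need repair even as an outline. First, the reduction you invoke at the start is misapplied: the identity in question is already the unicellular case ($\svec=\emptyset$), and \cref{cor:unicellularImpliesVS} runs in the opposite direction (unicellular implies vertical-strip), so it reduces nothing here. Second, even if the Lee-recursion route succeeded, \cref{prop:recToAbelian} only generates \emph{abelian} area sequences from rectangular ones, so that argument could at best prove the identity for abelian $\avec$, not for arbitrary area sequences as the statement requires; the paper makes this limitation explicit. As it stands, the proposal contains a genuine gap at its core and does not establish the conjecture.
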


\subsection*{Acknowledgements}

The author would like to thank Qiaoli Alexandersson, 
Fran{\c{c}}ois Bergeron, Svante Linusson, 
Greta Panova and Robin Sulzgruber for
helpful discussions and suggestions. 
The author also appreciates the comments by the anonymous referees which improved the paper.
This research has been funded by the \emph{Knut and Alice Wallenberg Foundation} (2013.03.07).

\bibliographystyle{alphaurl}
\bibliography{bibliography}

\newcommand{\etalchar}[1]{$^{#1}$}
\begin{thebibliography}{HHL{\etalchar{+}}05b}

\bibitem[AP18]{AlexanderssonPanova2016}
Per Alexandersson and Greta Panova.
\newblock {LLT} polynomials, chromatic quasisymmetric functions and graphs with
  cycles.
\newblock {\em Discrete Mathematics}, 341(12):3453--3482, December 2018.
\newblock \href {http://dx.doi.org/10.1016/j.disc.2018.09.001}
  {\path{doi:10.1016/j.disc.2018.09.001}}.

\bibitem[AS19]{AlexanderssonSulzgruber2018}
Per Alexandersson and Robin Sulzgruber.
\newblock {P}-partitions and {p}-positivity.
\newblock {\em International Mathematics Research Notices}, July 2019.
\newblock \href {http://dx.doi.org/10.1093/imrn/rnz130}
  {\path{doi:10.1093/imrn/rnz130}}.

\bibitem[Ath15]{Athanasiadis15}
Christos~A. Athanasiadis.
\newblock Power sum expansion of chromatic quasisymmetric functions.
\newblock {\em Electronic Journal of Combinatorics}, 22(2):1--9, 2015.
\newblock \href {http://arxiv.org/abs/1409.2595} {\path{arXiv:1409.2595}}.

\bibitem[Ber13]{Bergeron2013}
Fran{\c{c}}ois Bergeron.
\newblock Multivariate diagonal coinvariant spaces for complex reflection
  groups.
\newblock {\em Advances in Mathematics}, 239:97--108, June 2013.
\newblock \href {http://dx.doi.org/10.1016/j.aim.2013.02.013}
  {\path{doi:10.1016/j.aim.2013.02.013}}.

\bibitem[Ber17]{Bergeron2017}
Fran{\c{c}}ois Bergeron.
\newblock Open questions for operators related to rectangular {C}atalan
  combinatorics.
\newblock {\em Journal of Combinatorics}, 8(4):673--703, 2017.
\newblock \href {http://dx.doi.org/10.4310/joc.2017.v8.n4.a6}
  {\path{doi:10.4310/joc.2017.v8.n4.a6}}.

\bibitem[BMPS18]{BlasiakMorsePunSummers2018}
Jonah Blasiak, Jennifer Morse, Anna Pun, and Daniel Summers.
\newblock $k$-{S}chur expansions of {C}atalan functions.
\newblock {\em ArXiv e-prints}, 2018.
\newblock \href {http://arxiv.org/abs/1811.02490} {\path{arXiv:1811.02490}}.

\bibitem[CM17]{CarlssonMellit2017}
Erik Carlsson and Anton Mellit.
\newblock A proof of the shuffle conjecture.
\newblock {\em Journal of the American Mathematical Society}, 31(3):661--697,
  November 2017.
\newblock \href {http://dx.doi.org/10.1090/jams/893}
  {\path{doi:10.1090/jams/893}}.

\bibitem[D'A19]{DAdderio2019}
Michele D'Adderio.
\newblock $e$-positivity of vertical strip {LLT} polynomials.
\newblock {\em ArXiv e-prints}, 2019.
\newblock \href {http://arxiv.org/abs/1906.02633} {\path{arXiv:1906.02633}}.

\bibitem[DLT94]{DesarmenienLeclercThibon1994}
Jacques D{\'{e}}sarm{\'{e}}nien, Bernard Leclerc, and Jean-Yves Thibon.
\newblock Hall-{L}ittlewood functions and {K}ostka--{F}oulkes polynomials in
  representation theory.
\newblock {\em S{\'{e}}minaire Lotharingien de Combinatoire [electronic only]},
  32:38, 1994.
\newblock URL: \url{http://eudml.org/doc/119019}.

\bibitem[Dv18]{DahlbergWilligenburg2018}
Samantha Dahlberg and Stephanie {van Willigenburg}.
\newblock Lollipop and lariat symmetric functions.
\newblock {\em {SIAM} Journal on Discrete Mathematics}, 32(2):1029--1039,
  January 2018.
\newblock \href {http://dx.doi.org/10.1137/17m1144805}
  {\path{doi:10.1137/17m1144805}}.

\bibitem[Ell17a]{Ellzey2016}
Brittney Ellzey.
\newblock Chromatic quasisymmetric functions of directed graphs.
\newblock In {\em 29th {I}nternational Conference on Formal Power Series and
  Algebraic Combinatorics}, volume 78B. S{\'{e}}minaire Lotharingien de
  Combinatoire, 2017.
\newblock 12 pages.
\newblock URL:
  \url{https://www.emis.de/journals/SLC/wpapers/FPSAC2017/74%20Ellzey.pdf}.

\bibitem[Ell17b]{Ellzey2017}
Brittney Ellzey.
\newblock A directed graph generalization of chromatic quasisymmetric
  functions.
\newblock {\em ArXiv e-prints}, 2017.
\newblock \href {http://arxiv.org/abs/1709.00454} {\path{arXiv:1709.00454}}.

\bibitem[ER91]{Egecioglu1991}
{\"{O}}mer E{\u{g}}ecio{\u{g}}lu and Jeffrey~B. Remmel.
\newblock Brick tabloids and the connection matrices between bases of symmetric
  functions.
\newblock {\em Discrete Applied Mathematics}, 34(1-3):107--120, November 1991.
\newblock \href {http://dx.doi.org/10.1016/0166-218x(91)90081-7}
  {\path{doi:10.1016/0166-218x(91)90081-7}}.

\bibitem[GHQR19]{GarsiaHaglundQiuRomero2019}
Adriano~M. Garsia, James Haglund, Dun Qiu, and Marino Romero.
\newblock $e$-positivity results and conjectures.
\newblock {\em ArXiv e-prints}, 2019.
\newblock \href {http://arxiv.org/abs/arXiv:1904.07912}
  {\path{arXiv:arXiv:1904.07912}}.

\bibitem[Hag07]{qtCatalanBook}
James Haglund.
\newblock {\em The $q,t$-{C}atalan numbers and the space of diagonal harmonics
  ({U}niversity lecture series)}.
\newblock American Mathematical Society, 2007.
\newblock URL: \url{https://www.math.upenn.edu/\textasciitilde
  jhaglund/books/qtcat.pdf}.

\bibitem[HHL05a]{HaglundHaimanLoehr2005}
James Haglund, Mark Haiman, and Nicholas~A. Loehr.
\newblock A combinatorial formula for {M}acdonald polynomials.
\newblock {\em J. Amer. Math. Soc.}, 18(03):735--762, July 2005.
\newblock \href {http://dx.doi.org/10.1090/s0894-0347-05-00485-6}
  {\path{doi:10.1090/s0894-0347-05-00485-6}}.

\bibitem[HHL{\etalchar{+}}05b]{HaglundHaimanLoehrRemmelUlyanov2005}
James Haglund, Mark Haiman, Nicholas~A. Loehr, Jeffrey~B. Remmel, and Alexei
  Ulyanov.
\newblock A combinatorial formula for the character of the diagonal
  coinvariants.
\newblock {\em Duke Mathematical Journal}, 126(2):195--232, February 2005.
\newblock \href {http://dx.doi.org/10.1215/s0012-7094-04-12621-1}
  {\path{doi:10.1215/s0012-7094-04-12621-1}}.

\bibitem[HL05]{HaglundLoehr2005}
J.~Haglund and N.~Loehr.
\newblock A conjectured combinatorial formula for the {H}ilbert series for
  diagonal harmonics.
\newblock {\em Discrete Mathematics}, 298(1-3):189--204, August 2005.
\newblock \href {http://dx.doi.org/10.1016/j.disc.2004.01.022}
  {\path{doi:10.1016/j.disc.2004.01.022}}.

\bibitem[HNY18]{HuhNamYoo2018}
JiSun Huh, Sun-Young Nam, and Meesue Yoo.
\newblock Melting lollipop chromatic quasisymmetric functions and {S}chur
  expansion of unicellular {LLT} polynomials.
\newblock {\em ArXiv e-prints}, 2018.
\newblock \href {http://arxiv.org/abs/1812.03445} {\path{arXiv:1812.03445}}.

\bibitem[HP17]{HaraldaPrecup2017}
Megumi Harada and Martha Precup.
\newblock The cohomology of abelian {H}essenberg varieties and the
  {S}tanley--{S}tembridge conjecture.
\newblock {\em ArXiv e-prints}, 2017.
\newblock \href {http://arxiv.org/abs/1709.06736} {\path{arXiv:1709.06736}}.

\bibitem[HW17]{HaglundWilson2017}
James Haglund and Andrew~T. Wilson.
\newblock Macdonald polynomials and chromatic quasisymmetric functions.
\newblock {\em ArXiv e-prints}, 2017.
\newblock \href {http://arxiv.org/abs/1701.05622} {\path{arXiv:1701.05622}}.

\bibitem[Lee18]{Lee2018}
Seung~Jin Lee.
\newblock Linear relations on {LLT} polynomials and their $k$-{S}chur
  positivity for $k=2$.
\newblock {\em ArXiv e-prints}, 2018.
\newblock \href {http://arxiv.org/abs/1807.03951} {\path{arXiv:1807.03951}}.

\bibitem[LLT97]{Lascoux97ribbontableaux}
Alain Lascoux, Bernard Leclerc, and Jean-Yves Thibon.
\newblock Ribbon tableaux, {H}all--{L}ittlewood functions, quantum affine
  algebras and unipotent varieties.
\newblock {\em J. Math. Phys}, 38:1041--1068, 1997.

\bibitem[Mac95]{Macdonald1995}
Ian~G. Macdonald.
\newblock {\em Symmetric functions and {H}all polynomials}.
\newblock Oxford Mathematical Monographs. The Clarendon Press, Oxford
  University Press, New York, second edition, 1995.
\newblock With contributions by A. Zelevinsky, Oxford Science Publications.

\bibitem[Pv18]{PatriasWilligenburg2018}
Rebecca Patrias and Stephanie {van Willigenburg}.
\newblock The probability of positivity in symmetric and quasisymmetric
  functions.
\newblock {\em ArXiv e-prints}, 2018.
\newblock \href {http://arxiv.org/abs/1810.11038} {\path{arXiv:1810.11038}}.

\bibitem[SS93]{StanleyStembridge1993}
Richard~P. Stanley and John~R. Stembridge.
\newblock On immanants of {J}acobi--{T}rudi matrices and permutations with
  restricted position.
\newblock {\em Journal of Combinatorial Theory, Series A}, 62(2):261--279,
  March 1993.
\newblock \href {http://dx.doi.org/10.1016/0097-3165(93)90048-d}
  {\path{doi:10.1016/0097-3165(93)90048-d}}.

\bibitem[Sta95]{Stanley95Chromatic}
Richard~P. Stanley.
\newblock A symmetric function generalization of the chromatic polynomial of a
  graph.
\newblock {\em Advances in Mathematics}, 111(1):166--194, 1995.
\newblock \href {http://dx.doi.org/10.1006/aima.1995.1020}
  {\path{doi:10.1006/aima.1995.1020}}.

\bibitem[Sta01]{StanleyEC2}
Richard~P. Stanley.
\newblock {\em Enumerative {C}ombinatorics: {V}olume 2}.
\newblock Cambridge University Press, first edition, 2001.
\newblock URL: \url{http://www.worldcat.org/isbn/0521789877}.

\bibitem[SW12]{ShareshianWachs2012}
John Shareshian and Michelle~L. Wachs.
\newblock Chromatic quasisymmetric functions and {H}essenberg varieties.
\newblock In {\em Configuration Spaces}, pages 433--460. Scuola Normale
  Superiore, 2012.
\newblock \href {http://dx.doi.org/10.1007/978-88-7642-431-1_20}
  {\path{doi:10.1007/978-88-7642-431-1_20}}.

\bibitem[SW16]{ShareshianWachs2016}
John Shareshian and Michelle~L. Wachs.
\newblock Chromatic quasisymmetric functions.
\newblock {\em Advances in Mathematics}, 295(4):497--551, June 2016.
\newblock \href {http://dx.doi.org/10.1016/j.aim.2015.12.018}
  {\path{doi:10.1016/j.aim.2015.12.018}}.

\bibitem[TZ03]{TudoseZabrocki2003}
Geanina Tudose and Mike Zabrocki.
\newblock A $q$-analog of {S}chur's {Q}-functions.
\newblock In Naihuan Jing, editor, {\em Algebraic Combinatorics and Quantum
  Groups}. World Scientific, 2003.
\newblock \href {http://dx.doi.org/10.1142/5331} {\path{doi:10.1142/5331}}.

\end{thebibliography}

\end{document}